\title{High Order Numerical Homogenization for Dissipative 
  Ordinary Differential Equations}
\author{Zeyu Jin\thanks{Yuanpei College \& School of Mathematical 
Sciences, Peking University
    (\email{jinzy@pku.edu.cn}).} \and Ruo Li\thanks{CAPT, LMAM \&
    School of Mathematical Sciences, Peking University
    (\email{rli@math.pku.edu.cn}).}}
\crefname{hypothesis}{Hypothesis}{Hypotheses}
\crefname{assumption}{Assumption}{Assumptions}
\newcommand{\mr}{\mathbb{R}}
\newcommand{\rx}{\mathbb{R}^{n_x}}
\newcommand{\ry}{\mathbb{R}^{n_y}}
\newcommand{\eps}{\varepsilon}
\newcommand{\rd}{{\mathrm{d}}}
\newcommand{\rr}{{\mathrm{r}}}
\newcommand{\od}[2]{\dfrac{\mathrm{d} #1}{\mathrm{d} #2}}
\newcommand{\pd}[2]{\dfrac{\partial #1}{\partial #2}}
\newcommand{\ms}{\mathcal{S}}
\newcommand{\ip}[2]{\left\langle{#1},{#2}\right\rangle} 
\newcommand{\mc}{\mathcal{C}}
\newcommand{\ml}{\mathcal{L}}
\newcommand{\hg}{\hat{\Gamma}}
\newcommand{\hgd}{\hat{\Gamma}^\mathrm{d}}
\newcommand{\tx}{\tilde{x}}
\newcommand{\ty}{\tilde{y}}
\newcommand{\norm}[1]{\left\|#1\right\|}
\newcommand{\e}[2]{{#1 \times 10^{#2}}}
\begin{document}

\maketitle

\begin{abstract}
  We propose a high order numerical homogenization method for
  dissipative ordinary differential equations (ODEs) containing two
  time scales. Essentially, only first order homogenized model
  globally in time can be derived. To achieve a high order method, we
  have to adopt a numerical approach in the framework of the
  heterogeneous multiscale method (HMM). By a successively refined
  microscopic solver, the accuracy improvement up to arbitrary order
  is attained providing input data smooth enough. Based on the
  formulation of the high order microscopic solver we derived, an
  iterative formula to calculate the microscopic solver is then
  proposed. Using the iterative formula, we develop an implementation
  to the method in an efficient way for practical
  applications. Several numerical examples are presented to validate
  the new models and numerical methods.
\end{abstract}

\begin{keywords}
  Multiscale methods; Dissipative systems; Homogenization; Correction 
  model
\end{keywords}

\begin{AMS}
  34E13, 65L04
\end{AMS}

\section{Introduction}
Multiple-time-scale problems are often encountered in many disciplines such as chemical kinetics \cite{Macnamara2008Multiscale}, molecular dynamics \cite{Car1985Unified,Zhang1998A} and celestial mechanics \cite{Laskar1994Large}. 
There are many studies concerning stiff systems of ordinary differential equations (ODEs), especially those with two time scales. 
In general, these systems can be divided into two categories \cite{Weinan2003Analysis}. One is dissipative systems where fast variables tend to the stationary state at an exponential rate. The other one is oscillatory systems where fast variables oscillate in some orbits. 

It is impossible to resolve all the time scales and capture all the variables 
numerically due to limited computing power. In many problems, we are only 
interested in the dynamics of slow macroscopic variables. There is some work 
on designing efficient numerical algorithms for stiff ODEs, such as implicit 
Runge-Kutta methods \cite{Hairer1980Solving}, backward differentiation 
formulas \cite{Brayton1972A}, Rosenbrock methods \cite{Kaps1985Rosenbrock} and 
projective methods \cite{Gear2001Projective}.

It is well known \cite{Papanicolaou1976Some} that, as 
$\eps\rightarrow0$, the dynamics of slow variables satisfy a limiting 
equation, which can be obtained by averaging methods. For simple 
systems, the limiting equation can be derived by analytical tools. 
For complex systems, however, we have to sample the fast variables to 
approximate the limiting equation. A famous method of this type is 
the heterogeneous multiscale method 
\cite{Weinan2012Review,Weinan2012The,Engquist2005HMMODEs}. In HMM, 
there is a microscopic solver to sample the fast variables, and a 
macroscopic solver to evolve the slow variables. Some results of 
numerical analysis on this method can be found in 
\cite{Weinan2003Analysis}. There are three main sources of errors of 
HMM, including modeling error, sampling error and truncation error of 
the macroscopic solver. The modeling error is of order 
$\mathcal{O}(\eps)$. When $\eps$ is sufficiently small, the modeling 
error is small enough. However, when $\eps$ is relatively but not 
extremely small, the modeling error is not ignorable. Therefore, it 
is necessary to propose a correction model to reduce the modeling 
error in this situation.

There have been several attempts to reduce modeling error in multiscale method. For example, in \cite{Chartier2012Higher}, they use B-series to derive a high-order stroboscopic averaged equations for a kind of highly oscillatory systems.
In \cite{Jiang2015A,Wu}, they develop a first order correction model for sediment 
transport in sub-critical case, where the modeling error can be reduced to 
$\mathcal{O}(\eps^2)$. 

In this paper, we develop a novel high-order correction model and the 
corresponding numerical algorithms for stiff dissipative system of 
ODEs. Our studies begin with the theory of invariant manifold 
for ODEs \cite{Givon2004Extracting,pavliotis2008multiscale}. It can 
be proved that the invariant manifold is a global attractor. 
Actually, the microscopic solver of HMM is designed to find an 
approximation to the invariant manifold with an error of 
$\mathcal{O}(\eps)$. In order to derive the correction models, we 
need to find high-order approximations to the invariant manifold.
By the asymptotic approximation method, the analytical expressions of the first several terms in the formal expansion can be obtained.
We can prove that this approximation shares similar properties to the 
invariant manifold. 
In other words, the trajectories tend to this approximation to 
invariant manifold at an exponential rate.
Once the trajectories are close to this approximation, then they can be approximated by a reduced model over a finite time horizon.
However, the terms obtained by the formal expansion are too 
complicated to be implemented, especially for higher-order methods. 
In addition, it requires the evaluation of high-order derivatives.
We design an iterative formula for the sake of practicality.
We can prove that the iterative method matches with the formal expansion in some sense. We can also prove that the approximation accuracy reaches $\mathcal{O}(\eps^{k+1})$ after $k$ iteration steps.
By the technology of numerical derivatives, we design a recursion method using the iterative scheme. We also present some numerical analysis on our algorithms.

The rest of this paper is arranged as follows. In \Cref{sec:pre}, we 
briefly introduce the multiscale dissipative systems 
and the heterogeneous multiscale methods. In \Cref{sec:model}, 
we present our models for high-order homogenization and analyze their 
properties. In \Cref{sec:alg}, we develop two types of 
algorithms in the framework of HMM. Some numerical analysis is 
presented in \Cref{sec:analysis}. Numerical results are shown in 
\Cref{sec:num} and the paper ends with a brief summary and 
conclusion in \Cref{sec:concl}.

\section{Preliminaries}
\label{sec:pre}
\subsection{Dissipative systems}
\label{sec:pre_dissipative}
Let us consider the following ODEs with scale separation 
\cite{pavliotis2008multiscale}:
\begin{equation}
  \label{equation}
  \left\{
    \begin{aligned}
      & \od{x}{t} = f(x,y), \\
      & \od{y}{t} = \frac{1}{\eps}  g(x,y), \\
      & x|_{t = 0} = x_0, \ y|_{t = 0} = y_0,
    \end{aligned}
  \right.
\end{equation}
where $x \in \rx$ is the slow variable and $y \in \ry$ is the fast
variable, $n_x$ and $n_y$ are the dimension of $x$ and $y$,
respectively. The parameter $0< \eps \le \eps_0 \ll 1$
characterizes the separation of time scales.
Let $y^x$ be the solution of
\begin{equation}\label{equ:yx}
  \left\{
  \begin{aligned}
    & \od{y^x}{t} = \frac{1}{\eps}  g(x,y^x), \\
    & y^x|_{t = 0} = y_0,
  \end{aligned}
  \right.
\end{equation}
for any $x$ fixed. Suppose that $\rd \mu^x(y)$ is the corresponding 
invariant measure of \cref{equ:yx} satisfying that for any $\phi \in 
\ml^1(\ry; \rd \mu^x)$, 
\begin{displaymath}
  \lim\limits_{T \rightarrow +\infty} \frac{1}{T} \int_{0}^{T} 
  \phi(y^x(t)) \, \rd t = \int_{\ry} \phi(y) \, \rd \mu^x(y), \ 
  \text{ for } \mu^x-\text{a.e. } y_0 \in \ry.
\end{displaymath}
Let
\begin{displaymath}
  F(x) = \lim\limits_{T\rightarrow+\infty} \frac{1}{T} \int_{0}^{T} 
  f(x,y^x(t)) \, \rd t = \int_{\ry} f(x,y) \, \rd \mu^x(y).
\end{displaymath}
Under some appropriate assumptions \cite{Papanicolaou1976Some}, the
trajectory of $x(t)$ tends to a solution of the limiting equation
\begin{equation}\label{equ:limiting_equation}
  \od{X_0}{t}=F(X_0),
\end{equation}
as $\eps\rightarrow0$ in some sense.

Since now on, we assume the regularity of the functions $f$ and $g$. 
Precisely, we assume that
\begin{assumption}
  \label{assump:bound}
  The functions $f$ and $g$ are sufficiently smooth. In addition, 
  there exists $K \in \mathbb{N} \setminus \{0\}$ such that
  \footnote{ 
  Here we adopt standard notations of Sobolev spaces. The 
  space $W^{k,\infty}(\mathbb{R}^n,\mathbb{R}^m)$ is equipped with 
  the norm defined by
    \begin{displaymath}
      \norm{ \xi }_{ W^{k,\infty} (\mr^n, \mr^m) } := \max_{j = 0, 1, 
      \ldots, k} \norm{ \nabla^j \xi }_{\ml^\infty} , \quad \xi \in 
      W^{k,\infty} (\mr^n,\mr^m).
    \end{displaymath}
    When no ambiguity is possible, $\norm{ \cdot }_{ W^{k,\infty} 
    (\mr^n, \mr^m) }$ and $ W^{k,\infty} (\mr^n, \mr^m)$ are 
    abbreviated as $\norm{ \cdot }_{k,\infty}$ and $W^{k,\infty}$, 
    respectively. Let $|\cdot|_{k,\infty}$ defined by 
    \begin{displaymath}
      |\xi|_{k,\infty} := \norm{\nabla^k \xi}_{\ml^\infty}
    \end{displaymath}
    be the Sobolev semi-norm.}
  \begin{displaymath}
    f \in W^{K,\infty}(\rx \times \ry, \rx), \quad \nabla g \in
    W^{K,\infty}(\rx \times \ry, \mr^{n_y \times (n_x + n_y)}).
  \end{displaymath}
\end{assumption}
Furthermore, we always have the following assumption for $g$.
\begin{assumption}
  \label{assump:dissipative}
  For each $x \in \rx$, there exists $\gamma(x) \in \ry$ such that
  \begin{equation} \label{equ:assump_dissipative_1}
    g(x, \gamma(x)) = 0.
  \end{equation}
  In addition, there exists $\beta > 0$ such that
  \begin{equation}
    \label{equ:assump_dissipative_2}
    \ip{g(x,y) - g(x,\tilde{y})}{y - \tilde{y}} 
    \le -\beta |y-\tilde{y}|^2, 
    \quad \forall x \in \rx, \forall y, \tilde{y} \in \ry.
  \end{equation}
\end{assumption}
By \cref{assump:dissipative}, one can see that the dynamic
for $y$ with $x$ fixed has a unique, globally exponentially 
attracting point \cite{pavliotis2008multiscale}. At this time, the 
system \cref{equation} is called a \textit{dissipative system}. It is 
clear that the invariant measure $\rd \mu^x(y)$ at fixed $x$ 
is a one-point distribution at $y = \gamma(x)$. In addition, one has 
that $F(x) = f(x, \gamma(x))$. It was pointed out in 
\cite{pavliotis2008multiscale} that, under some appropriate 
assumptions, the modeling error between the solutions of 
\cref{equation} and \cref{equ:limiting_equation} is of order 
$\mathcal{O}(\eps)$ in a finite time horizon.
\begin{remark}
  \label{remark:monotone}
  Let $\tilde{g}_x = g(x, \cdot) : \ry \rightarrow \ry$.
  If $g(x,y)$ satisfies \cref{equ:assump_dissipative_2}, then we say
  that $\tilde{g}_x$ is a $\beta$-strongly dissipative operator or
  $-\tilde{g}_x$ is a $\beta$-strongly monotonic operator on $\ry$ by
  the definitions in \cite{Ryu2016A, Brezis2010Functional}. It can be
  deduced \cite{Songnian2018An} that, if $\tilde{g}_x$ is 
  $L$-Lipschitz continuous, and $\beta$-strongly dissipative, then 
  $\tilde{g}_x$ is a bijection on $\ry$, $\tilde{g}_x^{-1} : \ry 
  \rightarrow \ry$ is $\frac{1}{\beta}$-Lipschitz continuous, and 
  $\frac{\beta}{L^2}$-strongly dissipative.
\end{remark}

\subsection{Heterogeneous multiscale method}
\label{sec:pre_hmm}
The heterogeneous multiscale method (HMM) is a general strategy for 
multiscale problems \cite{Weinan2012The,Weinan2003Analysis}. It makes 
use of two solvers: a macroscopic solver and a microscopic solver. 
Let us consider 
the system \cref{equation}. As a macroscopic solver, a conventional 
explicit ODE solver is chosen to evolve 
\cref{equ:limiting_equation}. 
For example, take the forward Euler method with a time step 
$\Delta t$ as the macroscopic solver, which can be expressed as
\begin{displaymath}
x_{n+1} = x_n + \Delta t F(x_n).
\end{displaymath}
To evaluate $F(x_n)$, a microscopic solver is chosen to resolve the 
microscopic scale. In the case of the forward Euler method with a 
time step $\delta t$ , one gets that 
\begin{subequations} \label{equ:hmm_micro}
\begin{align}
\label{equ:hmm_micro_1}
& y_{n,m+1} = y_{n,m} + \frac{\delta t}{\eps} g(x_n,y_{n,m}), \ m = 
0, 1, \ldots, M-1, \\
\label{equ:hmm_micro_2}
& y_{n,0} \text{ suitably chosen.}
\end{align}
\end{subequations}
Then one can estimate $F(x_n)$ by
$$ F(x_n) \approx \sum_{m=0}^{M} K_{m,M} f(x_n,y_{n,m}), $$
where the weights $\{K_{m,M}\}$ should satisfy the constraint
$ \sum_{m=0}^{M} K_{m,M} = 1. $
As suggested in \cite{Weinan2003Analysis}, for dissipative systems, 
one can choose the weights as
\begin{displaymath}
\begin{aligned}
&K_{m,M} = 0, \ m = 0, 1, \ldots, M-1, \\
&K_{M,M} = 1.
\end{aligned}
\end{displaymath}
In other words, one may estimate that $F(x_n) = f(x_n, \gamma(x_n)) 
\approx f(x_n,y_{n,M}) $. In this situation, the microscopic solver 
\cref{equ:hmm_micro} can be regarded as a nonlinear solver for the 
equation $g(x_n,y) = 0$ with respect to $y$.


\section{Models}
\label{sec:model}
In this section, we present our high-order correction models of the
limiting equation \cref{equ:limiting_equation} and analyze their
properties. We begin with asymptotic approximation to the invariant
manifold and then design an iterative formula to generate high-order
correction models automatically. The modeling error can be reduced to 
$\mathcal{O}(\eps^{k+1})$ in the $k$th-order correction model. 

\subsection{Invariant manifold}
\label{sec:model_attractor}
To begin with, we introduce the concept of invariant manifold 
\cite{pavliotis2008multiscale}. Assuming that there exists an 
invariant set $\ms_\eps$ of \cref{equation}, and that it can be 
represented as a smooth graph over $x$, namely, there exists a 
function $\Gamma: \rx \times (0,\eps_0] \rightarrow \ry$ that is 
differentiable with respect to $x$, such that
\begin{displaymath}
\ms_\eps = \left\{
  (x,y): y = \Gamma(x,\eps),\ x \in \rx 
  \right\}, \quad \forall \eps \in (0,\eps_0].
\end{displaymath}
This implies that 
\begin{equation}
\label{equ:center}
g(x,\Gamma(x,\eps)) = \eps \nabla_x\Gamma(x,\eps) f(x,\Gamma(x,\eps)),
\ \forall x \in \rx.
\end{equation}
The equation \cref{equ:center} plays a central role in our 
discussions. It can be proved under some appropriate conditions 
that $\ms_\eps$ is a \textit{global 
attractor} of the system \cref{equation}, that is, for any initial 
values,
\begin{equation}
\label{equ:attractor}
\lim \limits_{t \rightarrow +\infty} |y(t)-\Gamma(x(t),\eps)| = 0.
\end{equation}
\begin{proposition}
  \label{prop:attractor}
  Suppose that $\nabla_y f(x,y)$ and $\nabla_x\Gamma(x,\eps)$ are 
  bounded, then $\ms_\eps$ is a global attractor of the system 
  \cref{equation} for sufficiently small $\eps$.
\end{proposition}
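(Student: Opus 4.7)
The plan is to reduce the claim to an energy estimate for the deviation $z(t) := y(t) - \Gamma(x(t),\eps)$. Differentiating along the flow of \cref{equation} and using the defining relation \cref{equ:center} for the invariant manifold, I would obtain
\begin{equation*}
\od{z}{t} = \frac{1}{\eps}\bigl[g(x,y) - g(x,\Gamma(x,\eps))\bigr] - \nabla_x\Gamma(x,\eps)\bigl[f(x,y) - f(x,\Gamma(x,\eps))\bigr],
\end{equation*}
where $x=x(t)$ and $y=y(t)$. The point of rewriting $\dot{z}$ in this centered form is that both bracketed differences vanish when $z=0$, so each is controlled by $|z|$.

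Next I would compute $\tfrac{1}{2}\od{|z|^2}{t} = \ip{z}{\dot{z}}$ and estimate the two resulting terms separately. For the first term, the strong dissipativity \cref{equ:assump_dissipative_2} in \cref{assump:dissipative} gives
\begin{equation*}
\frac{1}{\eps}\ip{z}{g(x,y) - g(x,\Gamma(x,\eps))} \le -\frac{\beta}{\eps}|z|^2.
\end{equation*}
For the second term, using the boundedness hypotheses on $\nabla_y f$ and $\nabla_x\Gamma$, say $\norm{\nabla_y f}_{\ml^\infty} \le L_f$ and $\norm{\nabla_x\Gamma}_{\ml^\infty} \le L_\Gamma$, the mean value theorem yields $|f(x,y)-f(x,\Gamma(x,\eps))| \le L_f |z|$, hence the term is bounded by $L_f L_\Gamma |z|^2$. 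Combining, I get
\begin{equation*}
\od{|z|^2}{t} \le 2\left(-\frac{\beta}{\eps} + L_f L_\Gamma\right)|z|^2.
\end{equation*}

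Choosing $\eps$ small enough that $\beta/\eps > L_f L_\Gamma$ — e.g.\ any $\eps < \beta/(2 L_f L_\Gamma)$ works and makes the bracketed coefficient strictly negative — Gronwall's inequality gives $|z(t)|^2 \le |z(0)|^2 \exp(-2\alpha t/\eps)$ for some $\alpha > 0$, so $|y(t) - \Gamma(x(t),\eps)| \to 0$ exponentially fast, which is \cref{equ:attractor}.

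I do not expect any serious obstacle: the computation is a standard contraction argument in the $y$-direction, with the $x$-drift term treated as a bounded perturbation that is defeated by the $1/\eps$ dissipation. The only subtle point is making sure the boundedness hypotheses on $\nabla_y f$ and $\nabla_x\Gamma$ are used cleanly; this is what forces the smallness condition on $\eps$, and in particular the quantitative threshold depends on $\beta$, $L_f$, and $L_\Gamma$ but not on the initial data.
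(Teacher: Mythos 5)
Your argument is correct and is essentially the paper's own proof: you form the same deviation $z = y - \Gamma(x,\eps)$, use \cref{equ:center} to center the drift term, bound the $g$-difference via strong dissipativity \cref{equ:assump_dissipative_2} and the $f$-difference via the stated boundedness of $\nabla_y f$ and $\nabla_x\Gamma$, and close with Gronwall after choosing $\eps$ small enough that the $-\beta/\eps$ term dominates. The only cosmetic difference is that you rewrite $\dot z$ in centered form before taking the inner product, while the paper splits the inner product directly; the estimates and the threshold $\eps \lesssim \beta/(L_f L_\Gamma)$ are identical.
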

\begin{proof}
  Assuming that $\big| \nabla_y f(x,y) \big| \le C$ and 
  $\big| \nabla_x \Gamma(x,\eps) \big| \le C$.
  Let $z(t) = y(t) - \Gamma(x(t),\eps)$.
  By \cref{assump:dissipative}, one gets that 
  \begin{align*}
  	&\frac{1}{2} \od{|z|^2}{t} = \ip{z}{\od{z}{t}}
  	= \ip{y - \Gamma(x,\eps)}
  	  {\frac{1}{\eps} g(x,y) - \nabla_x \Gamma(x,\eps) f(x,y)}\\
  	=& \ip{y-\Gamma(x,\eps)}{\frac{1}{\eps} g(x,y) - \frac{1}{\eps}  
  	g(x,\Gamma(x,\eps))} \\
    & \qquad\quad + \ip{y - \Gamma(x,\eps)}
  	  {\nabla_x \Gamma(x,\eps) f(x,\Gamma(x,\eps)) - \nabla_x \Gamma(x,\eps) f(x,y)} \\
  	\le& \left(-\frac{\beta}{\eps} + C^2\right) |z|^2.
  \end{align*}
  This implies that 
  $|z(t)|^2 \le e^{-\frac{\beta}{\eps} t} |z(0)|^2$ when 
  $0< \eps \le \min \left\{ \frac{\beta}{2C^2}, \eps_0 \right\}$, and 
  then the proof is completed.
\end{proof}
\cref{prop:attractor} says that the trajectory of 
$(x(t),y(t))$ tends to the set $\ms_\eps$ at an exponential rate. 
Later we refer to this type of properties as the 
\textit{attractive property}.
Since $\ms_\eps$ is invariant, one obtains that, if the initial 
values lie on $\ms_\eps$, then the trajectory of $(x(t),y(t))$ 
stays on $\ms_\eps$ for any $t>0$. At this time, one may use 
\begin{equation}
  \label{equ:invariant}
  \od{X}{t} = f(X,\Gamma(X,\eps))
\end{equation}
rather than \cref{equation} to 
calculate the trajectory of $x(t)$. In other words, the system 
\cref{equation} can be decoupled. We refer to this type of 
properties as the \textit{decouplable property}. It should be 
emphasized that \cref{prop:attractor} depends on the existence of 
$\Gamma(x,\eps)$, however, the rest of this article does not 
depend on it.

As an example, we study the invariant manifold of the linearized 
equation of the system \cref{equation}.
\begin{example}
  \label{example:linear_1}
  We consider the following linear equation as a special case of the 
  system \cref{equation}:
  \begin{equation}
    \label{equ:linear}
    \left\{
    \begin{aligned}
      &\od{x}{t} = A_{11}x + A_{12}y + b_1,\\
      &\od{y}{t} = \frac{1}{\eps} (A_{21}x + A_{22}y + b_2),
    \end{aligned}
  \right.
  \end{equation}
  where 
  $A_{11} \in \mathbb{R}^{n_x\times n_x}$, 
  $A_{12} \in \mathbb{R}^{n_x\times n_y}$, 
  $A_{21} \in \mathbb{R}^{n_y\times n_x}$, 
  $A_{22} \in \mathbb{R}^{n_y\times n_y}$, 
  $b_1 \in \rx$, $b_2 \in \ry$. 
  We assume that $A_{22}$ is a negative definite matrix to satisfy 
  \cref{assump:dissipative}. 
  
  Suppose that the function $\Gamma(x,\eps)$ has a form of 
  $\Gamma(x,\eps) = Cx + d$, where 
  $C = C(\eps) \in \mathbb{R}^{n_y \times n_x}$ and 
  $d = d(\eps) \in \ry$.
  By \cref{equ:center}, one gets the following equations:
  \begin{subequations}
    \label{equ:center_linear}
    \begin{align}
      \label{equ:center_linear_1}
      &\eps C A_{12} C + \eps C A_{11} - A_{22} C - A_{21} = 0,\\
      \label{equ:center_linear_2}
      &(A_{22} - \eps C A_{12}) d = \eps C b_1 - b_2.
    \end{align}
  \end{subequations}
  It is known that \cref{equ:center_linear_1} is an algebraic matrix 
  Riccati equation \cite{freiling2002survey,Su1992The}. We can prove 
  the well-posedness of \cref{equ:center_linear} when $\eps$ is 
  sufficiently small. See \cref{thm:linear} in \cref{app:linear}.
\end{example}

\subsection{Asymptotic approximation}
\label{sec:model_expansion}
In this part, we consider the asymptotic approximation to 
$\Gamma(x,\eps)$. We calculate the first several terms in the formal 
asymptotic expansion, and then prove the attractive property and 
the 
decouplable property of this approximation.

\subsubsection{Formal expansions}
Suppose that $\Gamma(x,\eps)$ satisfies \cref{equ:center}. 
We try formal expansions of the form
\begin{subequations}
  \label{equ:expansion}
  \begin{align}
    \label{equ:expansion_1}
      &\Gamma(x,\eps) = \gamma_0(x) + \eps \gamma_1(x) 
      + \eps^2 \gamma_2(x) + \mathcal{O}(\eps^3),\\
    \label{equ:expansion_2}
      &\nabla_x \Gamma(x,\eps) = \nabla \gamma_0 (x) 
      + \eps \nabla \gamma_1(x) + \eps^2 \nabla \gamma_2(x) 
      + \mathcal{O}(\eps^3).
  \end{align}
\end{subequations}
By \cref{assump:dissipative}, we specify that 
$\gamma_0(x)=\gamma(x)$. By Taylor's expansion of $g(x,y)$ at 
$(x,\gamma(x))$ and the formal expansion in \cref{equ:expansion_1}, 
one notices that the left-hand side of \cref{equ:center} can be 
written as 
\begin{equation}
  \label{equ:expansion_left}
  \begin{aligned}
    & g(x,\Gamma(x,\eps))\\
    = & g(x,\gamma(x) + \eps \gamma_1(x) 
    + \eps^2 \gamma_2(x) + \mathcal{O}(\eps^3)) \\
    = & \eps G_y(x) \gamma_1(x) + \eps^2 \left( G_y(x) \gamma_2(x) 
    + \frac{1}{2} \left[ \sum_{j,k=1}^{n_y} G_{yy}^{i,j,k} 
    \gamma_1^j(x) \gamma_1^k(x) \right]_{i=1}^{n_y} \right) + 
    \mathcal{O}(\eps^3),
\end{aligned}
\end{equation}
where $G_y(x) = [G_y^{i,j}(x)]_{i,j=1}^{n_y} = 
\left[ \frac{\partial g^i}{\partial y^j} (x,\gamma(x))
  \right]_{i,j=1}^{n_y}$ and 
$G_{yy}(x) = [G_{yy}^{i,j,k}(x)]_{i,j,k=1}^{n_y} = 
\left[ \frac{\partial^2 g^i}{\partial y^j \partial y^k} (x,\gamma(x)) 
  \right]_{i,j,k=1}^{n_y}$. 
Similarly, by Taylor's expansion of $f(x,y)$ at $(x,\gamma(x))$ and 
the formal expansions in \cref{equ:expansion}, one obtains that the 
right-hand side of \cref{equ:center} can be written as
\begin{equation}
  \label{equ:expansion_right}
  \begin{aligned}
    & \eps \nabla_x \Gamma(x,\eps) f(x,\Gamma(x,\eps)) \\
    = & \eps \left( \nabla \gamma(x) + \eps \nabla \gamma_1(x) 
      \right) \left( F(x)+\eps F_y(x) \gamma_1(x) \right) 
      + \mathcal{O}(\eps^3)\\
    = & \eps \nabla \gamma(x) F(x) +\eps^2 \left( \nabla \gamma(x) 
      F_y(x) \gamma_1(x) + \nabla \gamma_1(x) F(x) \right) 
      + \mathcal{O}(\eps^3),
  \end{aligned}
\end{equation}
where $F_y(x) = [F_y^{i,j}(x)]_{i=1,j=1}^{n_x,n_y} = 
  \left[ \frac{\partial f^i}{\partial y^j}(x,\gamma(x)) 
  \right]_{i=1,j=1}^{n_x,n_y}$. 
By \cref{assump:dissipative}, $G_y(x)$ is invertible. 
By comparing \cref{equ:expansion_left} and 
\cref{equ:expansion_right}, one gets the analytic expressions of 
$\gamma_0(x)$, $\gamma_1(x)$ and $\gamma_2(x)$:
\begin{subequations}
\label{equ:expression_pre}
\begin{align}
\label{equ:expression_pre_0}
  &\gamma_0(x) = \gamma(x),\\
\label{equ:expression_pre_1}
  &\gamma_1(x) = G_y(x)^{-1} \nabla \gamma(x) F(x),\\
\label{equ:expression_pre_2}
  &\gamma_2(x) = G_y(x)^{-1} \Bigg( \nabla \gamma(x) F_y(x) 
  \gamma_1(x) \\
  & \nonumber \qquad\qquad\qquad + \nabla \gamma_1(x) F(x) - 
  \frac{1}{2} \Bigg[ \sum_{j,k=1}^{n_y} G_{yy}^{i,j,k} \gamma_1^j(x) 
  \gamma_1^k(x) \Bigg]_{i=1}^{n_y} \Bigg).
\end{align}
\end{subequations}
\begin{remark}
  \label{remark:expression}
  By taking gradient of both sides of 
  \cref{equ:assump_dissipative_1}, one obtains that 
  \begin{equation}
    \label{equ:nabla_gamma}
    G_x(x)+G_y(x)\nabla\gamma(x)=0,
  \end{equation}
  where $G_x(x) = [G_x^{i,j}(x)]_{i=1,j=1}^{n_y,n_x} = 
  \left[ \frac{\partial g^i}{\partial x^j} (x,\gamma(x)) \right] 
  _{i=1,j=1}^{n_y,n_x}$. 
  Therefore, one can obtain that $$\nabla\gamma(x) = 
  -G_y(x)^{-1}G_x(x).$$ 
  This implies that $\gamma_1(x)$, $\nabla\gamma_1(x)$ and 
  $\gamma_2(x)$ can be rewritten as algebraic expressions of function 
  values and derivatives of $f(x,y)$ and $g(x,y)$ at $(x,\gamma(x))$. 
  Therefore, one can obtain that $\gamma$, $\gamma_1$ and $\gamma_2$ 
  are sufficiently smooth. In addition, by 
  \cref{assump:bound,assump:dissipative}, and the expressions 
  in \cref{equ:expression_pre}, one can deduce that 
  $\nabla \gamma \in W^{K,\infty}$, $\gamma_1 \in 
  W^{K,\infty}$ and $\gamma_2 \in W^{K - 1, \infty}$. (See Lemmas in 
  \cref{app:thm:lemma}).
\end{remark}

\subsubsection{Properties}
As an approximation to $\Gamma(x,\eps)$, the function 
$\tilde{\Gamma}_2(x,\eps) := \gamma(x) + \eps \gamma_1(x) + \eps^2 
\gamma_2(x)$ should share similar properties with $\Gamma(x,\eps)$. 
Now we make the above statement rigorous. In this part, we let 
$\ms_\eps^{(2)} := \{ (x,y): y = \tilde{\Gamma}_2(x,\eps),\ x\in\rx\}$
and $z(t) = y(t) - \tilde{\Gamma}_2(x(t), \eps)$.


The attractive property that is parallel to \cref{prop:attractor} 
can 
be formulated as in \cref{thm:attractor}, which says that the system 
goes quickly from arbitrary initial values into a small vicinity of 
the approximate invariant manifold $\ms_\eps^{(2)}$.
\begin{theorem}
\label{thm:attractor}
There exists a constant $C>0$ independent of $\eps$ such that 
$$ |z(t)|^2 \le C \eps^6
+ e^{-\frac{\beta}{\eps} t} |z(0)|^2, $$ for sufficiently small 
$\eps$. In particular, as long as $t$ is sufficiently large, then 
$|z(t)| = \mathcal{O}(\eps^3)$.
\end{theorem}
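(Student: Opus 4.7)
The plan is to mirror the proof of \cref{prop:attractor}, the key difference being that $\tilde{\Gamma}_2$ satisfies the center equation \cref{equ:center} only up to an $\mathcal{O}(\eps^3)$ residual rather than exactly. Differentiating $z$ along trajectories gives
$$ \od{z}{t} = \frac{1}{\eps} g(x,y) - \nabla_x \tilde{\Gamma}_2(x,\eps) f(x,y). $$
After adding and subtracting $\frac{1}{\eps} g(x,\tilde{\Gamma}_2)$ and $\nabla_x \tilde{\Gamma}_2 f(x,\tilde{\Gamma}_2)$ in the energy identity $\frac{1}{2} \od{|z|^2}{t} = \ip{z}{\od{z}{t}}$, we obtain three contributions. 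The first is $\le -\frac{\beta}{\eps}|z|^2$ by \cref{assump:dissipative}. The second is $\le C|z|^2$ by \cref{assump:bound} together with boundedness of $\nabla_x \tilde{\Gamma}_2$, which follows from the Sobolev regularity of $\gamma, \gamma_1, \gamma_2$ recorded in \cref{remark:expression}.

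The crucial step is the third, residual, contribution. Define
$$ R(x,\eps) := g(x,\tilde{\Gamma}_2(x,\eps)) - \eps \nabla_x \tilde{\Gamma}_2(x,\eps) f(x,\tilde{\Gamma}_2(x,\eps)). $$
The coefficients $\gamma_0, \gamma_1, \gamma_2$ of \cref{equ:expression_pre} were constructed precisely so that the expansions \cref{equ:expansion_left} and \cref{equ:expansion_right} agree through order $\eps^2$. A quantitative Taylor expansion of $g$ and $f$ about $(x,\gamma(x))$, with remainders controlled by the $W^{K,\infty}$ norms of \cref{assump:bound} and the Sobolev bounds on $\gamma_1, \gamma_2$ from \cref{remark:expression}, then yields a uniform residual estimate $|R(x,\eps)| \le C\eps^3$. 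The contribution of $R$ to the energy inequality is $\ip{z}{R/\eps} \le C\eps^2 |z|$, which Young's inequality controls by $\frac{\beta}{2\eps}|z|^2 + \frac{C^2\eps^5}{2\beta}$.

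Combining the three contributions and taking $\eps$ small enough that the $C|z|^2$ term is absorbed by part of the remaining dissipation yields a differential inequality of the form
$$ \od{|z|^2}{t} \le -\frac{\beta}{\eps} |z|^2 + C' \eps^5. $$
A direct integrating-factor argument then gives $|z(t)|^2 \le e^{-\beta t /\eps} |z(0)|^2 + C \eps^6$, which is the claimed bound. The \textit{in particular} assertion follows immediately: once $t$ exceeds a constant multiple of $\eps \log(1/\eps)$, the exponential term drops below $\eps^6$, leaving $|z(t)| = \mathcal{O}(\eps^3)$.

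The main obstacle is the uniform $\mathcal{O}(\eps^3)$ bound on $R$. The cancellation of the order-$\eps$ and order-$\eps^2$ coefficients in $R$ is built into the definition of $\gamma_1, \gamma_2$, so the real work lies in propagating the regularity assumptions \cref{assump:bound} and \cref{remark:expression} into a quantitative Taylor remainder estimate that is uniform in $x \in \rx$. Once that bound is in hand, the rest is a scalar Gronwall computation that is essentially identical to the proof of \cref{prop:attractor}, now with a small forcing on the right-hand side.
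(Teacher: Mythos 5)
Your proposal is correct and matches the paper's proof essentially step for step: the same energy identity with the same three-way decomposition (dissipativity term, Lipschitz term, residual term), the same uniform $\mathcal{O}(\eps^3)$ bound on the residual $g(x,\tilde\Gamma_2) - \eps\nabla_x\tilde\Gamma_2\, f(x,\tilde\Gamma_2)$ coming from the Taylor-expansion cancellation built into the construction of $\gamma_1,\gamma_2$, and the same Young-then-Gronwall closing. The paper simply writes out the Taylor expansion explicitly in \cref{equ:thm_attractor_1} where you introduce the auxiliary notation $R(x,\eps)$, but the content is identical.
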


\begin{proof}
By Taylor's expansion of $f(x,y)$ and $g(x,y)$ at $(x,\gamma(x))$, 
one gets that 
\begin{equation}
  \label{equ:thm_attractor_1}
  \begin{aligned}
  &\frac{1}{\eps} g(x,\tilde{\Gamma}_2(x,\eps)) 
  - \nabla_x \tilde{\Gamma}_2(x,\eps) 
    f(x,\tilde{\Gamma}_2(x,\eps)) \\
  =\ & G_y(x)\left( \gamma_1(x) + \eps \gamma_2(x) \right)
    + \frac{1}{2} \eps \left[ \sum_{j,k=1}^{n_y} G_{yy}^{i,j,k} 
    \gamma_1^j(x) \gamma_1^k(x) \right]_{i=1}^{n_y} \\
  &\quad\quad - \left( \nabla \gamma(x) + \eps \nabla \gamma_1(x)   
    \right) \left(F(x) + \eps F_y(x) \gamma_1(x) \right) + 
    \mathcal{O}(\eps^2)\\
  =\ & \left( G_y(x) \gamma_1(x) - \nabla \gamma(x) F(x) \right) \\
  &\quad\quad + \eps \Bigg( G_y(x) \gamma_2(x) + \frac{1}{2}
    \Bigg[ \sum_{j,k=1}^{n_y} G_{yy}^{i,j,k} \gamma_1^j(x) 
    \gamma_1^k(x) \Bigg]_{i=1}^{n_y} \\
    & \qquad\qquad\qquad\qquad\qquad - \nabla \gamma(x) F_y(x) 
    \gamma_1(x) - \nabla \gamma_1(x) F(x) \Bigg) + 
    \mathcal{O}(\eps^2) \\
  =\ &\mathcal{O}(\eps^2),
\end{aligned}
\end{equation}
where $\mathcal{O}(\eps^2)$ can be controlled 
uniformly thanks to \cref{assump:bound}. 
By \cref{assump:dissipative}, \cref{remark:expression} and 
\cref{equ:thm_attractor_1}, there exists
a constant $C_1>0$ such that
\begin{displaymath}
\begin{aligned}
  &\frac{1}{2} \od{|z|^2}{t} = \ip{z}{\od{z}{t}} 
    = \ip{y - \tilde{\Gamma}_2(x,\eps)}
    {\frac{1}{\eps}  g(x,y) - \nabla_x 
    \tilde{\Gamma}_2(x,\eps) f(x,y)} \\
  =& \ip{y - \tilde{\Gamma}_2(x,\eps)}{\frac{1}{\eps} g(x,y) - 
  	\frac{1}{\eps} g(x,\tilde{\Gamma}_2(x,\eps))} \\
  &\quad\quad + \ip{y - \tilde{\Gamma}_2(x,\eps)}
  {\frac{1}{\eps} g(x,\tilde{\Gamma}_2(x,\eps)) 
    - \nabla_x \tilde{\Gamma}_2(x,\eps) 
    f(x,\tilde{\Gamma}_2(x,\eps))} \\
  &\quad\quad + \ip{y - \tilde{\Gamma}_2(x,\eps)}
    { \nabla_x \tilde{\Gamma}_2(x,\eps) 
    (f(x,\tilde{\Gamma}_2(x,\eps)) - f(x,y))} \\
  \le& \left( -\frac{\beta}{\eps} + C_1 \right) |z|^2
    + C_1 \eps^2 |z|
    \le -\frac{\beta}{2\eps} |z|^2 + \frac{C_1^2}{\beta} \eps^5,
\end{aligned}
\end{displaymath}
where $0 < \eps \le \min\left\{ \frac{\beta}{4 C_1}, \eps_0 \right\}$.
Here the last inequality is due to the Cauchy–Schwarz inequality. By 
Gronwall's inequality, one gets that
\begin{displaymath}
|z(t)|^2 \le \frac{ 2 C_1^2 }{ \beta^2 } \eps^6 
(1 - e^{-\frac{\beta}{\eps} t}) + e^{-\frac{\beta}{\eps} t}|z(0)|^2,
\end{displaymath} 
which completes the proof.
\end{proof}

Now we consider the decouplable property. Substituting 
$\tilde{\Gamma}_2(x,\eps)$ for 
$\Gamma(x,\eps)$ in \cref{equ:invariant}, one obtains the following 
equation: 
\begin{equation}
  \label{equ:invariant_2}
  \od{X_2}{t} = f(X_2, \tilde{\Gamma}_2(X_2,\eps)).
\end{equation}
One may surmise that, if the initial values 
are sufficiently close to $\ms_\eps^{(2)}$, then one may 
use \cref{equ:invariant_2} to approximate the trajectory of $x(t)$ 
in some sense. To be rigorous, we have \cref{thm:invariant}.

\begin{theorem}
\label{thm:invariant}
There exists a constant $C > 0$ independent of $\eps$ such that
\begin{displaymath}
|x(t)-X_2(t)|^2 \le e^{Ct} \left( |x(0)-X_2(0)|^2 + \eps^6 
    + \frac{\eps}{\beta} |z(0)|^2 \right),
\end{displaymath}
for sufficiently small $\eps$.
In particular, if $|x(0) - X_2(0)| = \mathcal{O}(\eps^3)$ and
$|z(0)| = \mathcal{O}(\eps^{\frac{5}{2}})$, then 
$|x(t) - X_2(t)| = \mathcal{O}(\eps^3)$ for $t \sim \mathcal{O}(1)$.
\end{theorem}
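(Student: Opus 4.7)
The plan is to control the error $e(t) := x(t) - X_2(t)$ by a Gronwall argument driven by two sources: the intrinsic error of the reduced dynamics evaluated on the true trajectory, and the deviation of $y(t)$ from the approximate manifold $\ms_\eps^{(2)}$, which was already quantified in \Cref{thm:attractor}. Concretely, I would subtract the two equations and split
\begin{displaymath}
\od{e}{t}
= f(x,y) - f(X_2,\tilde{\Gamma}_2(X_2,\eps))
= \bigl[f(x,y) - f(x,\tilde{\Gamma}_2(x,\eps))\bigr]
+ \bigl[f(x,\tilde{\Gamma}_2(x,\eps)) - f(X_2,\tilde{\Gamma}_2(X_2,\eps))\bigr].
\end{displaymath}
The first bracket is estimated by $|y - \tilde{\Gamma}_2(x,\eps)| = |z(t)|$ times the Lipschitz constant of $f$ from \cref{assump:bound}. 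The second is estimated by $|e(t)|$ times the Lipschitz constant of $x\mapsto f(x,\tilde{\Gamma}_2(x,\eps))$, which is bounded uniformly in $\eps$ because $\nabla \tilde{\Gamma}_2 = \nabla\gamma + \eps\nabla\gamma_1 + \eps^2\nabla\gamma_2$ and each of $\nabla\gamma,\nabla\gamma_1,\nabla\gamma_2$ is in $W^{K-1,\infty}$ as noted in \cref{remark:expression}.

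From here I would form $\tfrac{1}{2}\od{|e|^2}{t} = \ip{e}{\od{e}{t}}$, apply Cauchy--Schwarz together with Young's inequality to the cross term $|e|\,|z|$, and obtain a differential inequality of the form
\begin{displaymath}
\od{|e(t)|^2}{t} \le C_1 |e(t)|^2 + C_2 |z(t)|^2,
\end{displaymath}
for constants $C_1,C_2>0$ independent of $\eps$. Inserting the bound from \cref{thm:attractor}, $|z(t)|^2 \le C\eps^6 + e^{-\beta t/\eps}|z(0)|^2$, yields
\begin{displaymath}
\od{|e(t)|^2}{t} \le C_1 |e(t)|^2 + C_3 \eps^6 + C_2 e^{-\beta t/\eps} |z(0)|^2.
\end{displaymath}

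Applying the integral form of Gronwall's inequality then gives
\begin{displaymath}
|e(t)|^2 \le e^{C_1 t}\Bigl( |e(0)|^2 + C_3 \eps^6 \!\!\int_0^t e^{-C_1 s}\,\rd s
+ C_2 |z(0)|^2 \!\!\int_0^t e^{-C_1 s} e^{-\beta s/\eps}\,\rd s \Bigr).
\end{displaymath}
The $\eps^6$-integral contributes an $\mathcal{O}(\eps^6)$ term, and crucially the fast exponential $e^{-\beta s/\eps}$ integrates to at most $\eps/\beta$, producing exactly the $\tfrac{\eps}{\beta}|z(0)|^2$ factor claimed in the statement. Absorbing harmless constants into $C$ then delivers the stated bound, and the final remark about $|e(0)| = \mathcal{O}(\eps^3)$ and $|z(0)| = \mathcal{O}(\eps^{5/2})$ giving $|e(t)| = \mathcal{O}(\eps^3)$ on $\mathcal{O}(1)$ time intervals is then a direct substitution.

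The main obstacle I anticipate is not the Gronwall step itself but making sure every Lipschitz constant appearing in the estimate is genuinely independent of $\eps$ — in particular the constant controlling $x\mapsto f(x,\tilde{\Gamma}_2(x,\eps))$, which requires the uniform-in-$\eps$ bounds on $\nabla\gamma_j$ from \cref{remark:expression}. The second subtlety is bookkeeping the $\eps$-powers so that the fast transient $e^{-\beta t/\eps}|z(0)|^2$ is integrated against the macroscopic exponential $e^{C_1 t}$ and produces exactly a single $\eps$ prefactor; any careless use of Young's inequality on the cross term could swallow that sharp $\eps/\beta$ factor and weaken the statement.
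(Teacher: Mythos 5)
Your proof follows the paper's own argument essentially verbatim: both linearize the difference $\od{}{t}(x - X_2) = f(x,y) - f(X_2,\tilde\Gamma_2(X_2,\eps))$ via Lipschitz bounds on $f$ and the uniform-in-$\eps$ bound on $\nabla_x\tilde\Gamma_2$, form a Gronwall-type differential inequality for $|x - X_2|^2$, substitute the attractor estimate from \cref{thm:attractor}, and integrate the fast transient $e^{-\beta s/\eps}$ to produce the $\eps/\beta$ factor. The intermediate split you use (anchoring at $\tilde\Gamma_2(x,\eps)$ rather than the paper's $\tilde\Gamma_2(X_2,\eps)$) is cosmetic, and your concern about Young's inequality "swallowing" the sharp $\eps/\beta$ is a non-issue: that factor comes from $\int_0^t e^{-\beta s/\eps}\,\rd s$, not from the Young constant, and the paper simply chooses the Young parameter so that the coefficient on $|z|^2$ is $1$.
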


\begin{proof}
  
  
Since $\nabla_x f(x,y)$, $\nabla_y f(x,y)$ and $\nabla_x 
\tilde{\Gamma}_2(x,\eps)$ are all bounded, then there exists a 
constant $C_1 > 0$ such that
\begin{displaymath}
\begin{aligned}
  &\left| \od{(x-X_2)}{t}\right | 
    = |f(x,y) - f(X_2,\tilde{\Gamma}_2(X_2,\eps))|\\
  \le\ &|f(x,z + \tilde{\Gamma}_2(x,\eps)) 
            - f(x,\tilde{\Gamma}_2(X_2,\eps))| 
        + |f(x,\tilde{\Gamma}_2(X_2,\eps)) 
            - f(X_2,\tilde{\Gamma}_2(X_2,\eps))|\\
  \le\ & C_1 |z| + C_1 |x - X_2|.
\end{aligned}
\end{displaymath}
Then one obtains that 
\begin{displaymath}
  \begin{aligned}
    &\frac{1}{2} \od{|x-X_2|^2}{t} = \ip{x-X_2}{\od{(x-X_2)}{t}} \\
    \le & |x-X_2| \cdot \left| \od{(x-X_2)}{t} \right| 
    \le C_1 |z| \cdot |x-X_2| + C_1 |x-X_2|^2.
  \end{aligned}
\end{displaymath}
Therefore, by \cref{thm:attractor} and Cauchy-Schwarz 
inequality, there exists a constant $C > 0$ such that 
\begin{displaymath}
\od{|x-X_2|^2}{t} \le C (|x-X_2|^2 + \eps^6) 
    + e^{-\frac{\beta}{\eps}t} |z(0)|^2.
\end{displaymath}
By Gronwall's inequality,
\begin{displaymath}
|x(t)-X_2(t)|^2 \le e^{Ct} |x(0)-X_2(0)|^2 + \eps^6 (e^{Ct} - 1)
    + \frac{e^{Ct} - e^{-\frac{\beta}{\eps}t}}
    {\frac{\beta}{\eps} + C} |z(0)|^2 ,
\end{displaymath}
which completes the proof.
\end{proof}

\begin{remark}
\label{remark:couple}
Now we look back on what \cref{thm:attractor,thm:invariant} tell us. 
\cref{thm:attractor} says that, 
no matter what the initial values are, the trajectory of 
$(x(t),y(t))$ always tends to a state where 
$|y - \tilde{\Gamma}_2(x,\eps)| = \mathcal{O}(\eps^3)$ at an 
exponential rate. \cref{thm:invariant} says that, once the 
trajectory arrives at this state, one may use 
\cref{equ:invariant_2}, which is not stiff, to approximate the 
trajectory of $x(t)$ in a finite time horizon with an accuracy of 
$\mathcal{O}(\eps^3)$. 
In the literature, this phenomenon is referred 
as an initial layer or a boundary layer
\cite{pavliotis2008multiscale, Roberts1989Appropriate, 
Verhulst2005Methods, Stephen1995Initial, legoll2013micro}.
This remark plays a guiding role in designing our numerical 
algorithms later. 
\end{remark}

\begin{remark}
\label{remark:complex}
The properties of $\tilde{\Gamma}_2(x,\eps)$ are studied in this 
part. One may guess that $\tilde{\Gamma}_k(x,\eps) := \sum_{j=0}^{k} 
\eps^j \gamma_j(x)$ should have similar properties. However, it is 
expected that the expression of $\gamma_k(x)$ is very complex when 
$k$ is large. 
In addition, the evaluation of high-order derivatives is involved 
in the expressions of $\gamma_k(x)$.
It is not convenient to conduct theoretical analysis or 
to design numerical algorithms at this time.
\end{remark}

\subsection{An iterative method}
\label{sec:model_iteration}
In this part, an iterative method is presented to approximate the 
invariant manifold $\ms_\eps$. We put forward an iterative formula, 
which can be used to produce a series of successively refined 
approximations to $\Gamma(x,\eps)$. This method overcomes the 
difficulties in \cref{remark:complex} due to its concise 
form. 
It can be proved theoretically that this method is consistent 
with 
asymptotic approximation in some sense, and that the 
approximation 
accuracy reaches $\mathcal{O}(\eps^{k+1})$ after $k$ iteration 
steps.

\subsubsection{An iterative formula}
Inspired by \cref{equ:center}, we propose the following fixed-point 
iterative formula to approximate $\Gamma(x,\eps)$:
\begin{subequations}
  \label{equ:iteration}
  \begin{align}
    \label{equ:iteration_1}
      & \Gamma_0(x,\eps) = \gamma(x), \\
    \label{equ:iteration_2}
      & g(x,\Gamma_{k+1}(x,\eps)) = \eps \nabla_x \Gamma_k(x,\eps) 
      f(x,\Gamma_k(x,\eps)),\ k=0,1,2,\ldots.
    \end{align}
\end{subequations}
\cref{remark:monotone} implies the existence and uniqueness of 
the solution $\Gamma_{k+1}(x,\eps)$ of \cref{equ:iteration_2}, since
$g(x,\cdot)$ is Lipschitz continuous and strongly dissipative. 
Using the notations in \cref{remark:monotone}, one can write 
\cref{equ:iteration_2} as
\begin{displaymath}
  \Gamma_{k+1}(x,\eps) = \tilde{g}_x^{-1}\left( \eps \nabla_x 
  \Gamma_k(x,\eps) f(x,\Gamma_k(x,\eps)) \right).
\end{displaymath}

\subsubsection{Relationships with asymptotic approximation}
In this part, we study the relationships between the iterative 
formula \cref{equ:iteration} and asymptotic approximation. It can be 
proved that the iterative formula can produce approximations in 
\cref{equ:expression_pre} in the first two iteration steps. The proof 
of \cref{thm:iteration} can be found in \cref{app:pf_iter}.
\begin{theorem}
\label{thm:iteration}
We have the following conclusions:
\begin{subequations}
  \begin{flalign}
  \label{thm:iteration_1}
    |\Gamma_1(x,\eps) - \gamma(x) - \eps \gamma_1(x)| 
        &= \mathcal{O}(\eps^2),\\
  \label{thm:iteration_2}
    |\nabla_x \Gamma_1(x,\eps) - \nabla \gamma(x) 
        - \eps \nabla \gamma_1(x)| &= \mathcal{O}(\eps^2),\\
  \label{thm:iteration_3}
    |\Gamma_2(x,\eps) - \gamma(x) - \eps \gamma_1(x) 
        - \eps^2 \gamma_2(x)| &= \mathcal{O}(\eps^3),
	\end{flalign}
\end{subequations}
where the bounds can be uniformly controlled in $x \in \rx$. In other 
words, 
\begin{displaymath}
  \begin{aligned}
    \norm{\Gamma_1(\cdot,\eps) - \gamma - \eps \gamma_1}_{1, \infty} 
    = \mathcal{O}(\eps^2), \\
    \norm{\Gamma_2(\cdot,\eps) - \gamma - \eps \gamma_1 - \eps^2 
    \gamma_2}_{0, \infty} = \mathcal{O}(\eps^3). \\
  \end{aligned}
\end{displaymath}
\end{theorem}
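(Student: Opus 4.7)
The plan is to treat the three parts in order, since each uses the previous ones. Throughout, I will use \Cref{assump:dissipative} in its geometric form: for any $x$ and any $y,\tilde y$, the strong dissipativity plus Cauchy--Schwarz gives $|g(x,y)-g(x,\tilde y)|\ge\beta|y-\tilde y|$. This converts the defining relation \cref{equ:iteration_2} into a pointwise bound on $\Gamma_{k+1}(x,\eps)-\gamma(x)$ from the size of the right--hand side of \cref{equ:iteration_2} alone, and it converts errors in the right--hand side into errors of the same order in $\Gamma_{k+1}$.

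For \cref{thm:iteration_1}: the defining equation reads $g(x,\Gamma_1(x,\eps))=\eps\nabla\gamma(x)F(x)$, whose right--hand side is $O(\eps)$ uniformly by \cref{assump:bound}; strong dissipativity immediately yields $|\Gamma_1-\gamma|=O(\eps)$. Taylor expanding $g(x,\cdot)$ at $\gamma(x)$ to first order gives $g(x,\Gamma_1)=G_y(x)(\Gamma_1-\gamma)+O(|\Gamma_1-\gamma|^2)=G_y(x)(\Gamma_1-\gamma)+O(\eps^2)$, so $\Gamma_1-\gamma=G_y(x)^{-1}\eps\nabla\gamma(x)F(x)+O(\eps^2)=\eps\gamma_1(x)+O(\eps^2)$ by \cref{equ:expression_pre_1}. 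The constants are uniform because $G_y^{-1}$ is uniformly bounded (it is $\frac1\beta$-Lipschitz as a map) and $f$, $\nabla\gamma$ lie in $W^{K,\infty}$ by \Cref{remark:expression}.

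For \cref{thm:iteration_2}: differentiating $g(x,\Gamma_1(x,\eps))=\eps\nabla\gamma(x)F(x)$ in $x$ yields
\begin{equation*}
\partial_y g(x,\Gamma_1)\,\nabla_x\Gamma_1 \;=\; \eps\,\nabla_x\!\bigl(\nabla\gamma(x)F(x)\bigr)\;-\;\partial_x g(x,\Gamma_1).
\end{equation*}
Using \cref{thm:iteration_1} and a further Taylor expansion of $\partial_y g$ and $\partial_x g$ at $(x,\gamma(x))$, I get $\partial_y g(x,\Gamma_1)=G_y(x)+\eps G_{yy}(x)\gamma_1(x)+O(\eps^2)$ and $\partial_x g(x,\Gamma_1)=G_x(x)+\eps G_{xy}(x)\gamma_1(x)+O(\eps^2)$. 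Expanding the inverse $(G_y+\eps\,G_{yy}\gamma_1+O(\eps^2))^{-1}=G_y^{-1}-\eps\,G_y^{-1}G_{yy}\gamma_1\,G_y^{-1}+O(\eps^2)$ and multiplying, the $\eps^0$ term reproduces $-G_y^{-1}G_x=\nabla\gamma$ (by \cref{equ:nabla_gamma}). The $\eps^1$ term equals $G_y^{-1}\bigl[\nabla(\nabla\gamma\,F)-G_{xy}\gamma_1-G_{yy}\gamma_1\nabla\gamma\bigr]$, which is precisely $\nabla\gamma_1(x)$, as one sees by differentiating $G_y(x)\gamma_1(x)=\nabla\gamma(x)F(x)$ in $x$ and solving for $\nabla\gamma_1$. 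This step is the main obstacle, because one has to keep track of several tensor contractions and verify that this algebraic identity really gives back $\nabla\gamma_1$ with the correct sign; the uniformity of the $O(\eps^2)$ remainder again comes from the Sobolev bounds of \Cref{remark:expression}.

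For \cref{thm:iteration_3}, I apply the same dissipativity trick as in part~(1) but one order further. Using \cref{thm:iteration_1,thm:iteration_2} and Taylor expanding $f(x,\Gamma_1)$ at $(x,\gamma(x))$, the right--hand side of \cref{equ:iteration_2} equals
\begin{equation*}
\eps\,\nabla\gamma(x)F(x)+\eps^{2}\bigl(\nabla\gamma(x)F_y(x)\gamma_1(x)+\nabla\gamma_1(x)F(x)\bigr)+O(\eps^{3}),
\end{equation*}
uniformly in $x$. Set $\Phi(x,\eps):=\gamma(x)+\eps\gamma_1(x)+\eps^{2}\gamma_2(x)$ and Taylor expand $g(x,\Phi)$ at $\gamma(x)$ to second order; using the identities \cref{equ:expression_pre_1,equ:expression_pre_2}, the quadratic $G_{yy}$--term cancels the corresponding term in $G_y\gamma_2$, and one finds that $g(x,\Phi)$ equals the same right--hand side up to $O(\eps^{3})$. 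Hence $|g(x,\Gamma_2)-g(x,\Phi)|=O(\eps^{3})$ uniformly, and another application of strong dissipativity gives $|\Gamma_2-\Phi|\le\beta^{-1}|g(x,\Gamma_2)-g(x,\Phi)|=O(\eps^{3})$, proving \cref{thm:iteration_3}. The $W^{k,\infty}$ reformulation at the end then follows by the same arguments applied with the $|\cdot|_{k,\infty}$ bounds of \Cref{remark:expression} in place of pointwise bounds.
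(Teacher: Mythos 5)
Your proposal is correct and, for parts \eqref{thm:iteration_1} and \eqref{thm:iteration_3}, follows essentially the same route as the paper: bound the error in $\Gamma_k$ by $\frac{1}{\beta}$ times the error in $g$-values via strong dissipativity, then match Taylor expansions of the left- and right-hand sides of \cref{equ:iteration_2} at $(x,\gamma(x))$ using the definitions \cref{equ:expression_pre}. (Your small variant in part one --- first establishing $|\Gamma_1-\gamma|=\mathcal O(\eps)$ and then Taylor-expanding $g(x,\Gamma_1)$, rather than applying the Lipschitz bound for $\tilde g_x^{-1}$ directly at $\gamma+\eps\gamma_1$ --- is equally valid.)

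For part \eqref{thm:iteration_2} your route differs in one technical respect, and it is worth noting what the paper's formulation buys. You differentiate the iterate equation, Taylor-expand $\partial_y g(x,\Gamma_1)$ and $\partial_x g(x,\Gamma_1)$, invert $(G_y + \eps G_{yy}\gamma_1 + \mathcal O(\eps^2))$ by a Neumann series, and then verify explicitly that the $\eps^1$ coefficient of $\nabla_x\Gamma_1$ coincides with $\nabla\gamma_1$, which --- as you honestly flag --- requires checking an algebraic identity obtained by differentiating $G_y\gamma_1=\nabla\gamma\,F$ and keeping the tensor contractions straight. The paper avoids this last step entirely: after the same differentiation and Taylor expansion, it does not solve for $\nabla_x\Gamma_1$ but instead first establishes the uniform boundedness of $\nabla_x\Gamma_1$ (by writing it as $G_y^{-1}A$ with $A$ visibly bounded) and then rearranges the resulting identity into the \emph{factored} form $(G_y+\eps B)\bigl(\nabla_x\Gamma_1-\nabla\gamma-\eps\nabla\gamma_1\bigr)=\mathcal O(\eps^2)$. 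Since $G_y+\eps B$ has a uniformly bounded inverse for small $\eps$, the conclusion follows immediately, with no need to ever write down a closed-form expression for $\nabla\gamma_1$ or verify that it is reproduced. Your approach works --- the identity does hold, using the symmetry of $G_{yy}$ in its $y$-indices --- but it is more brittle; the paper's factorization is the cleaner way to organize the same Taylor data. Also note that the uniform boundedness of $\nabla_x\Gamma_1$ (which the paper proves explicitly and which is used to control the $\mathcal O(\eps^2)$ remainders) is only implicit in your Neumann-series step, so a fully rigorous write-up should state it.
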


\subsubsection{High-order approximation and convergence}
We prove in \cref{thm:iteration} that the iterative formula 
\cref{equ:iteration} matches the formal expansions in the first two 
iteration steps. As for high-order approximation, it should be 
tedious to prove similar results due to the complex expressions of 
$\gamma_k(x)$ when $k$ is large. However, we can get rid of the 
specific expressions of $\gamma_k(x)$ and prove the attractive 
property and the decouplable property that are parallel to
\cref{thm:attractor,thm:invariant}. Proofs of the 
following two theorems can be found in \cref{app:thm_iteration}. The 
interpretations for these properties are 
similar to \cref{remark:couple}. In this part, we let $z_k(t) = 
y(t) - \Gamma_k(x(t), \eps)$ and let $X_k$ be defined as the solution 
of 
\begin{equation} \label{equ:decouple}
    \od{X_k}{t}=f(X_k,\Gamma_k(X_k,\eps)).
\end{equation}
\begin{theorem}
	\label{thm:iteration_attractor}
    For any $k = 0, 1, \ldots, K$ and for any $A \in (0,2\beta)$, 
    there exists a constant $C_k>0$ such that
	\begin{displaymath}
	|z_k(t)|^2 \le C_k (\eps^{2k+2} + e^{-\frac{A}{\eps}t} 
    |z_k(0)|^2),
	\end{displaymath}
	where $C_k$ is dependent on $k$ and independent of $\eps$. 
	In particular, as long as $t$ is sufficiently large, then 
    $|z_k(t)| = \mathcal{O}(\eps^{k+1})$.
\end{theorem}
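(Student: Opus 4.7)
The plan is to follow the Lyapunov approach used in the proof of \cref{thm:attractor}: two ingredients are needed, (i) the uniform dissipativity of $g$ in Assumption~\ref{assump:dissipative}, and (ii) a ``defect'' estimate quantifying how close the approximate invariant manifold $\{y = \Gamma_k(x,\eps)\}$ comes to being exactly invariant. Concretely, I would seek a bound
\begin{displaymath}
\left| \frac{1}{\eps} g(x, \Gamma_k(x,\eps)) - \nabla_x \Gamma_k(x,\eps) \, f(x, \Gamma_k(x,\eps)) \right| \le D_k \eps^k,
\end{displaymath}
with $D_k$ independent of $\eps$. Combined with the dissipativity inequality this will drive the Gronwall argument in exactly the same way as in \cref{thm:attractor}, but with $\mathcal{O}(\eps^k)$ in place of $\mathcal{O}(\eps^2)$.

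The defect estimate is the real work, and I would prove it by induction on $k$ using the recursion \cref{equ:iteration_2}. Substituting the recursion rewrites the defect at level $k+1$ as
\begin{displaymath}
\nabla_x \Gamma_k(x,\eps) f(x, \Gamma_k(x,\eps)) - \nabla_x \Gamma_{k+1}(x,\eps) f(x, \Gamma_{k+1}(x,\eps)),
\end{displaymath}
so to obtain an $\mathcal{O}(\eps^{k+1})$ defect at level $k+1$ it suffices to show $\norm{\Gamma_{k+1} - \Gamma_k}_{1,\infty} = \mathcal{O}(\eps^{k+1})$. Via the inverse operator described in \cref{remark:monotone}, this difference equals $\tilde g_x^{-1}(\eps \nabla_x \Gamma_k f(x,\Gamma_k)) - \tilde g_x^{-1}(\eps \nabla_x \Gamma_{k-1} f(x,\Gamma_{k-1}))$, whose $L^\infty$ norm is bounded by $\frac{\eps}{\beta} \norm{\nabla_x \Gamma_k f(x,\Gamma_k) - \nabla_x \Gamma_{k-1} f(x,\Gamma_{k-1})}_{0,\infty}$, closing the induction on function values. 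For the gradient, I would differentiate \cref{equ:iteration_2} in $x$, solve for $\nabla_x \Gamma_{k+1}$ and invert $\nabla_y g(x,\Gamma_{k+1})$ (uniformly invertible thanks to Assumption~\ref{assump:dissipative}), producing an analogous recursion that controls $\nabla_x(\Gamma_{k+1}-\Gamma_k)$ in terms of lower levels. This bookkeeping, in which each iteration step consumes one Sobolev order from the reservoir $K$ in Assumption~\ref{assump:bound}, is the main technical obstacle — exactly as anticipated in \cref{remark:complex} — and is where I expect most of the effort to go.

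With the defect estimate in hand, set $z_k = y - \Gamma_k(x,\eps)$ and split $\frac{1}{2}\od{|z_k|^2}{t}$ into the three pieces used in \cref{thm:attractor}: the strongly dissipative contribution $\ip{z_k}{\frac{1}{\eps}(g(x,y) - g(x,\Gamma_k))} \le -\frac{\beta}{\eps}|z_k|^2$, the defect contribution $\ip{z_k}{\frac{1}{\eps}g(x,\Gamma_k) - \nabla_x \Gamma_k f(x,\Gamma_k)} \le D_k \eps^k |z_k|$, and the Lipschitz contribution $\ip{z_k}{\nabla_x \Gamma_k(f(x,\Gamma_k)-f(x,y))} \le C|z_k|^2$ using the boundedness of $\nabla_x \Gamma_k$ and the Lipschitz continuity of $f$ in $y$. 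A weighted Cauchy--Schwarz $D_k \eps^k |z_k| \le \frac{\eta}{2\eps}|z_k|^2 + \frac{D_k^2 \eps^{2k+1}}{2\eta}$, with the weight $\eta$ chosen strictly smaller than $2\beta - A$, absorbs the cross-term; choosing $\eps$ small enough to dominate the $C|z_k|^2$ term then yields
\begin{displaymath}
\od{|z_k|^2}{t} \le -\frac{A}{\eps}|z_k|^2 + C'_k \eps^{2k+1}.
\end{displaymath}
Gronwall's inequality finishes the proof, the extra factor $\eps/A$ from inverting the decay rate $A/\eps$ converting $\eps^{2k+1}$ into the stated $\eps^{2k+2}$ and delivering $|z_k(t)|^2 \le C_k(\eps^{2k+2} + e^{-At/\eps}|z_k(0)|^2)$.
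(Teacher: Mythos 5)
Your proposal is correct and its core engine is the same as the paper's: both hinge on the estimate $\norm{\Gamma_{k+1}-\Gamma_k}_{1,\infty}=\mathcal{O}(\eps^{k+1})$ (Corollary~\ref{lemma:iter} in the paper, proved by induction from Lemmas~\ref{lemma:basic} and~\ref{lemma:rhs}), and then a Lyapunov/Gronwall argument exploiting the $\beta$-dissipativity. The genuine difference is how the Gronwall argument is closed. After rewriting $\frac{1}{\eps}g(x,\Gamma_k)=\nabla_x\Gamma_{k-1}f(x,\Gamma_{k-1})$, the paper splits the cross term around the \emph{previous} iterate $\Gamma_{k-1}$, which produces $C\eps^k+C|z_{k-1}|$; it must then invoke the induction hypothesis for $|z_{k-1}(t)|^2$ inside the differential inequality, yielding a more delicate Gronwall step with a $t\,e^{-\tilde A t/\eps}$ term, the extra constant $\tilde A\in(A,2\beta)$, and a final triangle inequality relating $|z_{k-1}(0)|$ to $|z_k(0)|$. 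You instead split around the \emph{current} iterate $\Gamma_k$, getting a self-contained ``defect'' bound $D_k\eps^k$ plus a $C|z_k|^2$ term that is absorbed into the dissipative term, so each level $k$ is handled by a single Gronwall pass with no sub-induction on the $z$-estimates. Your route is a bit cleaner at the Lyapunov stage; the cost is that you have to state and prove the defect estimate as a stand-alone lemma, whereas the paper embeds the identical content (the $C\eps^k$ bound) inline via the iteration identity. The Sobolev bookkeeping you anticipate as the main obstacle is indeed exactly the content of the paper's Appendix Lemmas~\ref{lemma:highOrderD}--\ref{lemma:iter}.
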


\begin{theorem}
	\label{thm:iteration_invariant}
  For any $k = 0, 1, \ldots, K$, there exists a constant $C_k > 0$ 
  dependent on $k$ and independent of $\eps$ such that
  \begin{displaymath}
      |x(t)-X_k(t)|^2 \le e^{C_k t} \left( |x(0)-X_k(0)|^2 
      + \eps^{2k + 2} + \frac{C_k \eps}{\beta} |z_k(0)|^2 \right).
  \end{displaymath}
  In particular, if $|x(0) - X_k(0)| = \mathcal{O}(\eps^{k+1})$ and
  $|z_k(0)| = \mathcal{O}(\eps^{k + \frac{1}{2}})$, then 
  $|x(t) - X_k(t)| = \mathcal{O}(\eps^{k+1})$ for $t \sim 
  \mathcal{O}(1)$.
\end{theorem}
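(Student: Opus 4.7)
The plan is to mirror the proof of Theorem \ref{thm:invariant} almost verbatim, replacing $\tilde{\Gamma}_2$ with $\Gamma_k$ and replacing the exponent $3$ (the error rate in Theorem \ref{thm:attractor}) with $k+1$ (the error rate in Theorem \ref{thm:iteration_attractor}). Concretely, I would introduce the error $e(t) := x(t) - X_k(t)$ and start from
\begin{displaymath}
\od{e}{t} = f(x,y) - f(X_k, \Gamma_k(X_k,\eps)),
\end{displaymath}
then split the right-hand side as $[f(x,z_k + \Gamma_k(x,\eps)) - f(x,\Gamma_k(x,\eps))] + [f(x,\Gamma_k(x,\eps)) - f(X_k,\Gamma_k(X_k,\eps))]$. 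The first bracket is controlled by the Lipschitz constant of $f$ in $y$ times $|z_k|$, and the second by a Lipschitz constant of the composed map $x \mapsto f(x,\Gamma_k(x,\eps))$ times $|x - X_k|$. This yields $|\od{e}{t}| \le C_1 |z_k| + C_1 |e|$ for some $C_1$ independent of $\eps$ (for $\eps$ small enough).

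Once that differential inequality is in hand, the argument is routine: multiply by $e$, apply Cauchy–Schwarz as in the proof of Theorem \ref{thm:invariant} to obtain
\begin{displaymath}
\od{|e|^2}{t} \le C |e|^2 + C |z_k|^2,
\end{displaymath}
substitute the bound $|z_k(t)|^2 \le C_k(\eps^{2k+2} + e^{-A t/\eps} |z_k(0)|^2)$ from Theorem \ref{thm:iteration_attractor} with $A \in (0,2\beta)$ close to $\beta$, and integrate via Gronwall's inequality. The $\eps^{2k+2}$ term integrates to $\eps^{2k+2}(e^{Ct} - 1)$, while the decaying exponential integrates to a term of order $\frac{\eps}{A + C\eps} e^{Ct} |z_k(0)|^2$, from which the factor $\frac{\eps}{\beta}$ in the statement follows after absorbing constants into $C_k$. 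The asymptotic conclusion $|x(t) - X_k(t)| = \mathcal{O}(\eps^{k+1})$ on finite time horizons then drops out under the stated initial conditions.

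The only genuinely nontrivial ingredient is the uniform (in $\eps$) Lipschitz bound on $x \mapsto \Gamma_k(x,\eps)$, i.e., a uniform bound on $\nabla_x \Gamma_k$ for $0 < \eps \le \eps_0$. I would establish this by induction on $k$ using the iterative formula \cref{equ:iteration_2}: differentiating $g(x,\Gamma_{k+1}) = \eps \nabla_x \Gamma_k \cdot f(x,\Gamma_k)$ in $x$ gives an implicit expression for $\nabla_x \Gamma_{k+1}$ in which the matrix $\nabla_y g(x,\Gamma_{k+1})$ is invertible with inverse bounded by $1/\beta$ (by the strong dissipativity in \cref{assump:dissipative} applied to linearizations). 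Combined with \cref{assump:bound} and the inductive hypothesis, this yields $|\nabla_x \Gamma_{k+1}|_\infty \le |\nabla \gamma|_\infty + O(\eps)$, hence a uniform bound for $\eps$ sufficiently small and all $k \le K$. This is the step that should presumably be delegated to the appendix lemmas referenced in \cref{remark:expression} (and generalized to all $\Gamma_k$); once it is in hand, the body of the proof is a direct translation of the Theorem \ref{thm:invariant} argument.
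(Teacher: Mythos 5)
Your proposal is correct and follows essentially the same route as the paper's proof: split $f(x,y)-f(X_k,\Gamma_k(X_k,\eps))$ using Lipschitz bounds on $f$ and $\nabla_x\Gamma_k$, pass to $\od{|x-X_k|^2}{t} \le C(|x-X_k|^2 + |z_k|^2)$, substitute the bound from \cref{thm:iteration_attractor}, and integrate with Gronwall; the uniform bound on $\nabla_x\Gamma_k$ is exactly what \cref{lemma:iter} provides via induction on the iterative formula. The only cosmetic difference is your intermediate point $f(x,\Gamma_k(x,\eps))$ versus the paper's $f(x,\Gamma_k(X_k,\eps))$ in the triangle-inequality split, and both lead to the same $C|z_k|+C|x-X_k|$ bound.
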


Now we consider using the iterative formula \cref{equ:iteration} on 
the linearized equation \cref{equ:linear}. One may notice that 
\cref{equ:linear} does not satisfy \cref{assump:bound} 
since $f(x,y)$ in this case is not bounded. However, we can still 
prove similar results that the iterative formula 
\cref{equ:iteration} improves the approximation accuracy. Actually, 
we can also prove the convergence of \cref{equ:iteration} in this 
case.
\begin{example}
	\label{example:linear_2}
	We continue considering the linear equation \cref{equ:linear} in 
	\cref{example:linear_1}. By \cref{thm:linear}, there 
	exist uniform bounded solutions $(C^*,d^*)$ to 
	\cref{equ:center_linear} for sufficiently small $\eps$. Suppose 
	that $\Gamma_k(x,\eps)=C_kx+d_k$, where 
	$C_k=C_k(\eps)\in\mathbb{R}^{n_y\times n_x}$ and 
	$d_k=d_k(\eps)\in\ry$. By \cref{equ:iteration}, the iterative 
	formula for $C_k$ and $d_k$ can be given as
	\begin{displaymath}
		\begin{aligned}
		&C_0=-A_{22}^{-1}A_{21},\ d_0=-A_{22}^{-1}b_2,\\
		&C_{k+1}=-A_{22}^{-1}A_{21}+\eps A_{22}^{-1}C_kA_{11}+\eps A_{22}^{-1}C_kA_{12}C_k,\\
		&d_{k+1}=-A_{22}^{-1}b_2+\eps A_{22}^{-1}C_kb_1+\eps A_{22}^{-1}C_kA_{12}d_k.
		\end{aligned}
	\end{displaymath}
	Therefore, one can deduce that 
	\begin{subequations}
	\label{equ:iteration_linear}
	\begin{align}
	\label{equ:iteration_linear_1}&C_{k+1}-C^*=\eps A_{22}^{-1}(C_k-C^*)A_{11}+\eps A_{22}^{-1}C_kA_{12}(C_k-C^*)+\eps A_{22}^{-1}(C_k-C^*)A_{12}C^*,\\
	\label{equ:iteration_linear_2}&d_{k+1}-d^*=\eps A_{22}^{-1}(C_k-C^*)b_1+\eps A_{22}^{-1}C_kA_{12}(d_k-d^*)+\eps A_{22}^{-1}(C_k-C^*)A_{12}d^*.
	\end{align}
	\end{subequations}
	First, we consider \cref{equ:iteration_linear_1}. By the uniform 
	boundedness of $C^*$, there exists a constant $M_1>0$ independent 
	of $\eps$ and $k$ such that
	\begin{equation}
	\label{equ:example_linear_1}
	\norm{C_{k+1}-C^*}\le \eps M_1\norm{C_k-C^*}+\eps M_1\norm{C_k-C^*}^2.
	\end{equation}
	If we take $\eps$ sufficiently small, for example, such that $\eps 
	M_1(1+\norm{C_0-C^*})\le \frac{1}{2}$, then one can obtain by 
	\cref{equ:example_linear_1} that 
	\begin{displaymath}
	\norm{C_{k+1}-C^*}\le\frac{1}{2}\norm{C_k-C^*},\ \forall k=0,1,2,\ldots.
	\end{displaymath}
	Therefore, the sequence $\{C_k\}$ converges to $C^*$ and is 
	uniformly bounded in $k$ and sufficiently small $\eps$. By 
	\cref{equ:iteration_linear_1}, there exists a constant $M_2>0$ 
	independent of $\eps$ and $k$ such that 
	\begin{displaymath}
	\norm{C_{k+1}-C^*}\le\eps M_2\norm{C_k-C^*}.
	\end{displaymath}
	By \cref{thm:linear}, $\norm{C_0-C^*}=\mathcal{O}(\eps)$. 
	Therefore, $\norm{C_k-C^*}=\mathcal{O}(\eps^{k+1})$ for fixed $k$. 
	
	By \cref{equ:iteration_linear_2}, there exists a constant $M_3>0$ 
	independent of $\eps$ and $k$ such that 
	\begin{equation}
	\label{equ:example_linear_2}
	|d_{k+1}-d^*|\le\eps M_3\norm{C_{k}-C^*}+\eps M_3 |d_k-d^*|,
	\end{equation}
	which indicates that $|d_k-d^*|$ is uniformly bounded when $\eps$ is sufficiently small. Thus, $\varlimsup\limits_{k\rightarrow\infty}|d_k-d^*|<+\infty$.
	By taking upper limit of \cref{equ:example_linear_2}, one gets that 
	\begin{displaymath}
	(1-\eps M_3)\varlimsup\limits_{k\rightarrow\infty}|d_k-d^*|\le 0.
	\end{displaymath}
	Therefore, when $\eps$ is sufficiently small, the sequence 
	$\{d_k\}$ converges to $d^*$. By \cref{thm:linear}, 
	$|d_0-d^*|=\mathcal{O}(\eps)$. Therefore, by 
	\cref{equ:example_linear_2}, one obtains that 
	$|d_k-d^*|=\mathcal{O}(\eps^{k+1})$ for fixed $k$.
	
	In conclusion, we have already proven that $C_k\rightarrow C^*$ and $d_k\rightarrow d^*$ as $k\rightarrow\infty$. Furthermore, for fixed $k$, we have that $\norm{C_k-C^*}=\mathcal{O}(\eps^{k+1})$ and $|d_k-d^*|=\mathcal{O}(\eps^{k+1})$.
\end{example}

\section{Numerical Scheme}
\label{sec:alg}


In this section, we develop numerical algorithms to implement the 
models in \Cref{sec:model}.

\subsection{Basic framework and notations}
\label{sec:alg_frame}
Basically, our numerical scheme falls into the framework of HMM, 
which contains a microscopic solver to compute the steady state, and 
a macroscopic solver to evaluate the slow variables. 
As seen in \cref{remark:couple}, the numerical simulations can 
be divided into two stages:
\begin{itemize}
  \item the first stage: solving the coupled system \cref{equation} 
  in the initial layer by a coupled solver until $|z_k(t)| = |y(t) - 
  \Gamma_k(x(t), \eps)|$ is sufficiently small;
  \item the second stage: using the HMM-type algorithms developed 
  later to solve the decoupled system \cref{equ:decouple}.
\end{itemize}
Briefly, the HMM-type algorithms contain two parts: 
\begin{itemize}
  \item approximating the invariant manifold, i.e., calculating 
  $\Gamma_k(x, \eps)$ by a microscopic solver;
  \item solving the decoupled system of the slow variables by a 
  macroscopic solver.
\end{itemize}


In our numerical scheme, we need to calculate $\Gamma_k(x,\eps)$, as 
mentioned before. Let $\hg_k(x,\eps) \approx \Gamma_{k}(x,\eps)$ be 
the numerical approximation to $\Gamma_k(x,\eps)$. Hereinafter, we 
denote the algorithms, where $\Gamma_k(x,\eps)$ is needed, by HMM$k$.
Given a grid $\{ t_n \}$, let $(x_n,y_n)$ be the numerical 
approximation to $(x(t_n), y(t_n))$.

In the first stage, we solve the coupled system \cref{equation} 
numerically on the grid $\{ t_n = n \Delta t_c \}_{n = 0}^{N_c}$ with 
$N_c \Delta t_c = T_c$, 
where $\Delta t_c$ is the time step size of the coupled solver and 
$N_c$ can be determined by the criterion in \cref{remark:terminal}. 
One may choose an explicit one-step scheme 
as the coupled solver, which can be written as
\footnote{
  Notice that an explicit one-step scheme for the ODE:
  $ \od{z}{t} = h(z) $
  can be written in the form of 
  $ z_{n+1} = z_n + \Delta t \phi(z_n, h, \Delta t) $.
}
\begin{displaymath}
  \left( x_{n+1}, y_{n+1} \right) = \left( x_n, y_n \right) + \Delta 
  t_c \phi_c\left( \left( x_n, y_n \right), \left( f, g / \eps 
  \right), \Delta t_c \right), \quad n = 0, 1, \ldots, N_c - 1.
\end{displaymath}

In the second stage, we solve the decoupled system 
\cref{equ:decouple} numerically with initial value $ x_{N_c} $ on the 
grid$ \{ t_n = T_c + (n - N_c) \Delta t \}_{n = N_c}^N $ with $(N - 
N_c) \Delta t = T - T_c$, where $\Delta t$ is the time step size of 
the macroscopic solver. Similarly, we may choose an explicit one-step 
scheme as the macroscopic solver:
\begin{displaymath}
  x_{n+1} = x_n + \Delta t \phi_d(x_n, f(\cdot,\hg_k(\cdot,\eps)), 
  \Delta t), \quad n = N_c, N_c + 1, \ldots, N - 1.
\end{displaymath}

In our numerical scheme, we may need to solve the equation 
\begin{equation}   \label{equ:microequ}
  g(\tx,\ty) = \eps h
\end{equation}
with respect to $\ty$, where $h$ is a given quantity which may 
depend on $\tx$. \cref{remark:monotone} ensures the existence 
and uniqueness of the solution of \cref{equ:microequ}. Let us denote 
the solution by $ \tilde{g}_{\tx}^{-1}(\eps h) $. Consider the 
following ODE with respect 
to $\ty$
\begin{equation}  \label{equ:microode}
  \od{\ty}{t} = \frac{1}{\eps} g(\tx,\ty) - h,
\end{equation}
whose stationary point is exactly $\tilde{g}_{\tx}^{-1}(\eps h)$.
The microscopic solver solves \cref{equ:microode} numerically:
\begin{displaymath}
  \begin{aligned}
    & \ty_{m+1} = \ty_m + \delta t \phi_m(\ty_m, g(\tilde{x},\cdot) / 
    \eps - h, \delta t), \quad m = 0, 1, \ldots, M - 1, \\
    & \ty_0 \text{ suitably chosen},
  \end{aligned}
\end{displaymath}
where $\ty_m$ is short for $\ty_m(\tx, h)$, $\delta t$ is the 
time step size and $M$ is number of steps in the microscopic solver. 
One may estimate $\tilde{g}_{\tx}^{-1}(\eps h)$ by $\ty_M(\tx, 
h)$.
We will discuss the selection of the initial value $\ty_0$ in 
\cref{remark:initial}.

\subsection{Initial layer}
\label{sec:alg_initial}
In the initial layer $[0,T_c]$, i.e., the first stage of 
simulation, the system tends to the invariant manifold quickly. 
According to \cref{thm:iteration_invariant}, the terminal time 
$T_c$ of the coupled solver should be taken such that 
\begin{displaymath}
  |y(T_c) - \Gamma_k(x(T_c), \eps)| = \mathcal{O}(\eps^{k + 
  \frac{1}{2}}).
\end{displaymath}
By \cref{thm:iteration_attractor}, $T_c$ should be of order 
$\mathcal{O}(\eps \log \frac{1}{\eps}) $ for fixed initial value and 
$k$. This indicates that the coupled solver is only needed for a 
short time.

\begin{remark}
  \label{remark:terminal}
  It is a subtle problem to determine $T_c$ in numerical 
  simulations. 
  Here we present a possible empirical criterion inspired by 
  \cref{thm:iteration_attractor}:
  if
  \begin{displaymath}
    \begin{aligned}
      & n \equiv 0\quad (\mathrm{mod}\ n_p) \\
      & |y_n - \hg_k(x_n, \eps)| \ge \mu |y_{n - n_p} - \hg_k(x_{n - 
        n_p}, 
      \eps)|,
    \end{aligned}
  \end{displaymath}
  for some positive integer $n$, where $n_p \in 
  \mathbb{N} \setminus \{ 0 \}$ are given beforehand, then terminate 
  the coupled solver and let $N_c$ be $n$. We let $\mu = 
  \exp\left( -\frac{\hat{\beta}}{2\eps} n_p \Delta t_c \right)$, 
  where $- \hat{\beta} < 0$ is an estimation of the upper bound 
  of the eigenvalues of $\pd{g}{y}(x,y)$. We 
  calculate $\hg_k$ per $n_p$ steps in order to reduce 
  computational cost. In our numerical experiments, we set $n_p = 10$.
\end{remark}

\subsection{Approximation to the invariant manifold}
In this part, we present several numerical approaches to 
approximating $\Gamma_k(x,\eps)$.

\subsubsection{A naive approach}
\label{sec:alg1}
\cref{remark:expression} tells us that $\gamma_1(x)$ and 
$\gamma_2(x)$ can be rewritten as expressions 
of function values and derivatives of $f(x,y)$ and $g(x,y)$ at 
$(x,\gamma(x))$. In addition, $\gamma(x)$ can be approximated by 
\begin{equation}  \label{equ:gamma0}
  \gamma(x) \approx \ty_M(x, 0).
\end{equation}
Therefore, $\Gamma_k(x, \eps)$ can be 
approximated according to the expressions \cref{equ:expression_pre} 
when $k = 0, 1, 2$. However, these expressions are too complicated. 
In this part, we would like to design an algorithm that is easy to 
implement. 
The algorithm is based on the analytical expressions in 
\cref{equ:expression_pre} and it requires to evaluate the Jacobian 
matrix of $g(x,y)$.

First, we notice by \cref{equ:expression_pre_1} and 
\cref{equ:nabla_gamma} that
\begin{equation} \label{equ:gamma1}
\gamma_1(x) = -G_y(x)^{-2} G_x(x) F(x).
\end{equation}
By \cref{equ:expression_pre}, one can obtain that 
\begin{equation}
\label{equ:alg_expansion}
\begin{aligned}
\Gamma_2(x,\eps) =\ & \gamma_0(x) + \eps \gamma_1(x) + \eps^2 
\gamma_2(x) + \mathcal{O}(\eps^3)\\
=\ & \gamma(x) + \eps G_y(x)^{-1} \left( \nabla \gamma(x) + \eps 
\nabla \gamma_1(x) \right) f(x,\gamma(x) + \eps \gamma_1(x))\\
& \quad \quad -\frac{1}{2} \eps^2 G_y(x)^{-1} \left[ 
\sum_{j,k=1}^{n_y} G_{yy}^{i,j,k} \gamma_1^j(x) \gamma_1^k(x) 
\right]_{i=1}^{n_y} + \mathcal{O}(\eps^3).
\end{aligned}
\end{equation}
Now we consider how the terms in \cref{equ:alg_expansion} are 
evaluated. One can notice that 
\begin{displaymath}
\begin{aligned}
  & \left( \nabla \gamma(x) + \eps \nabla \gamma_1(x) \right) 
  f(x,\gamma(x) + \eps \gamma_1(x)) \\
  & \qquad \qquad = \lim \limits_{\tau \rightarrow 0} \frac{\gamma(x 
  + F_1(x) \tau) + \eps \gamma_1(x + F_1(x) \tau) - \gamma(x) - \eps 
  \gamma_1(x)}{\tau},
\end{aligned}
\end{displaymath}
where $F_1(x):=f(x, \gamma(x) + \eps \gamma_1(x))$.
Therefore, numerical derivatives can be used to approximate this 
term, that is,
\begin{equation}
\label{equ:alg_expansion_1}
\left( \nabla \gamma(x) + \eps \nabla \gamma_1(x) \right) 
f(x,\gamma(x) + \eps \gamma_1(x)) \approx \frac{\Delta_{+} 
y}{\tau},
\end{equation}
where $\Delta_{+} y = \gamma(x+F_1(x)\tau) + \eps 
\gamma_1(x+F_1(x)\tau) - \gamma(x) - \eps\gamma_1(x)$ with $\tau$ 
suitably chosen. Besides, Taylor's expansion of $g(x,y)$ at 
$(x,\gamma(x))$ yields that 
\begin{equation}
\label{equ:alg_expansion_2}
g(x,\gamma(x)+\eps\gamma_1(x))=\eps G_y(x)\gamma_1(x)+\frac{1}{2}\eps^2\Big[ \sum_{j,k=1}^{n_y} G_{yy}^{i,j,k} \gamma_1^j(x) \gamma_1^k(x) \Big]_{i=1}^{n_y}+\mathcal{O}(\eps^3).
\end{equation}
By \cref{equ:alg_expansion}, \cref{equ:alg_expansion_1} and 
\cref{equ:alg_expansion_2}, one can obtain that
\begin{displaymath}
\Gamma_2(x,\eps) \approx \gamma(x) + \eps \gamma_1(x) + G_y(x)^{-1} 
\left( \eps \frac{\Delta_{+} y}{\tau} - g(x,\gamma(x) + \eps 
\gamma_1(x)) \right),
\end{displaymath}
where $\gamma(x)$ and $\gamma_1(x)$ can be approximated by 
\cref{equ:gamma0} and \cref{equ:gamma1}, respectively. 

\subsubsection{Based on the iterative formula}
\label{sec:alg2}
Now we would like to utilize the iterative formula 
\cref{equ:iteration} to design an high-order HMM-type algorithm.


In order to obtain $\Gamma_k (x, \eps)$, we need to 
approximate 
$\nabla_x \Gamma_{k - 1}(x, \eps) f(x, \Gamma_{k - 1}(x, \eps))$. 
Actually, this 
term is the directional derivative of $\Gamma_{k - 1}(x, \eps)$ along 
$ f(x, \Gamma_{k - 1}(x, \eps)) $. Similar to what we did previously 
in 
\Cref{sec:alg1}, we notice that 
\begin{displaymath}
  \begin{aligned}
    &\nabla_x \Gamma_{k - 1}(x, \eps) f(x, \Gamma_{k - 1}(x, \eps)) 
    \\ & \quad \quad = \lim 
    \limits_{ 
      \tau \rightarrow 0} \frac{ \Gamma_{k - 1}(x + f(x, \Gamma_{k - 
      1}(x, 
      \eps)) \tau, 
      \eps) - \Gamma_{k - 1}(x,\eps) }{\tau}.
  \end{aligned}
\end{displaymath}
Therefore, one may use numerical derivative to approximate this term, 
that is,
\begin{displaymath}
\nabla_x \Gamma_{k - 1}(x, \eps) f(x, \Gamma_{k - 1}(x, \eps)) 
\approx \frac{ 
\Delta_{+} y }{\tau},
\end{displaymath}
where $\Delta_{+} y  = \Gamma_{k - 1} (x + f(x, \Gamma_{k - 
1}(x, \eps)) \tau, \eps) - \Gamma_{k - 1}(x, \eps)$ with $\tau$ 
suitably chosen. 
Then we 
need to solve an equation in the form of \cref{equ:microequ}, whose 
solution can be approximated by the microscopic solver.

\begin{remark}
  Using the iterative formula, we can obtain high-order 
  $\Gamma_k(x,\eps)$ by recursion. Actually, one can use the 
  algorithm introduced in \Cref{sec:alg1} to obtain 
  $\Gamma_k(x,\eps)$ where $k \ge 3$ by recursion in a similar way.
\end{remark}

\subsubsection{Summary and several remarks}
We summarize the algorithms introduced in \Cref{sec:alg1,sec:alg2} in 
\cref{alg1,alg2}, respectively.


\begin{algorithm}[ht]
  \caption{Approximating $\Gamma_k(x,\eps)$ using the methods 
  in \Cref{sec:alg1}.}
  \label{alg1}
  \begin{algorithmic}
    \Function{HMMtype1}{$x$, $\eps$, $k$}
      \If{$k$ equals to $0$}
        \State $\hat{\Gamma}_0(x,\eps) \gets \ty_M(x, 0)$;
        \State \Return{$\hat{\Gamma}_0(x,\eps)$};
      \ElsIf{$k$ equals to $1$}
        \State $\hat{\gamma}(x) \gets \ty_M(x, 0)$;
        \State $\hat{\gamma}_1(x)$ according to 
        \cref{equ:gamma1};
        \State $\hat{\Gamma}_1(x,\eps) \gets \hat{\gamma}(x) 
        + \eps \hat{\gamma}_1(x)$;
        \State \Return{$\hat{\Gamma}_1(x,\eps)$};
      \ElsIf{$k$ equals to $2$}
        \State $\hat{\gamma}(x) \gets \ty_M(x, 0)$ and $\hat{G}_y(x) 
        \gets \pd{g}{y}(x,\hat{\gamma}(x))$;
        \State $\hat{\gamma}_1(x)$ according to 
        \cref{equ:gamma1};
        \State $\hat{\Gamma}_1(x,\eps) \gets \hat{\gamma}(x) + \eps 
        \hat{\gamma}_1(x)$;
        \State $\hat{F}_1(x) \gets f(x, \hat{\Gamma}_1(x,\eps))$;
        \State $\hat{\Gamma}_1(x+\hat{F}_1(x)\tau, \eps) \gets 
        $\Call{HMMtype1}{$x+\hat{F}_1(x)\tau$, $\eps$, $1$};
        \State $\hat{\Gamma}_2(x,\eps) \gets \hat{\Gamma}_1(x,\eps) + 
        \hat{G}_y(x)^{-1} \left( \eps \frac{\hat{\Gamma}_1(x + 
        \hat{F}_1(x) \tau, \eps) - \hat{\Gamma}_1(x,\eps)}{\tau} - 
        g(x,\hat{\Gamma}_1(x,\eps)) \right)$;
        \State \Return{$\hat{\Gamma}_2(x,\eps)$};
      \Else
        \State $\hat{\Gamma}_{k-1}(x,\eps) \gets 
        $\Call{HMMtype1}{$x$, $\eps$, $k-1$};
        \State $\hat{F}_{k-1}(x) \gets 
        f(x,\hat{\Gamma}_{k-1}(x,\eps))$;
        \State $\hat{\Gamma}_{k-1}(x+\hat{F}_{k-1}(x)\tau,\eps) 
        \gets$\Call{HMMtype1}{$x+\hat{F}_{k-1}(x)\tau$, $\eps$, 
        $k-1$};
        \State $\hat{\Gamma}_k(x,\eps) \gets \ty_M \left(x, 
        \frac{\hg_{k - 1}(x + \hat{F}_{k-1}(x) \tau, \eps) - \hg_{k - 
        1}(x, \eps)}{\tau} \right)$;
        \State \Return{$\hat{\Gamma}_k(x,\eps)$};
      \EndIf
    \EndFunction
  \end{algorithmic}
\end{algorithm}

\begin{algorithm}[ht]
  \caption{Approximating $\Gamma_k(x,\eps)$ using the methods 
    in \Cref{sec:alg2}.}
  \label{alg2}
  \begin{algorithmic}
    \Function{HMMtype2}{$x$, $\eps$, $k$}
      \If{$k$ equals to $0$}
        \State $\hg_0(x,\eps) \gets \ty_M(x,0)$;
        \State \Return{$\hg_0(x,\eps)$};
      \Else
        \State $\hg_{k-1}(x,\eps) \gets $\Call{HMMtype2}{$x$, $\eps$, 
        $k-1$};
        \State $\hat{F}_{k-1}(x) \gets f(x,\hg_{k - 1}(x,\eps))$;
        \State $\hg_{k-1}(x+\hat{F}_{k-1}(x)\tau,\eps) 
        \gets$\Call{HMMtype2}{$x+\hat{F}_{k-1}(x)\tau$, $\eps$, 
        $k-1$};
        \State $\hg_{k}(x,\eps) \gets \ty_M \left(x, \frac{\hg_{k - 
        1}(x + \hat{F}_{k-1}(x) \tau, \eps) - \hg_{k - 1}(x, 
        \eps)}{\tau} \right)$;
        \State \Return{$\hg_k(x,\eps)$};
      \EndIf 
    \EndFunction
  \end{algorithmic}
\end{algorithm}

\begin{remark}
	These two algorithms are recursive algorithms. In order to evaluate 
	$\hat{\Gamma}_k(x,\eps)$ for one time, \cref{alg2} needs to call 
	the microscopic solver for $(2^{k+1} - 1)$ times, while \cref{alg1} 
	needs one time if $k \le 1$, and $(3 \times 2^{k-2} - 1)$ times 
	otherwise. 
  One can notice that \cref{alg1} calls the microscopic 
  solver for fewer times than \cref{alg2}, at the expense of 
  computing the Jacobian matrix of $g(x,y)$. The computation cost 
  increases exponentially as $k$ increases.
  In addition, a larger $k$ may lead to more accumulations of 
  numerical error (See \cref{remark:accum}). 
\end{remark}

\begin{remark}
  \label{remark:initial}
  Another advantage of high-order HMM is that it can give a better 
  estimation of the fast variable $y$ at the next time step, as the 
  initial value of the microscopic solver. In both algorithms, when 
  calculating $\hat{\Gamma}_k(x,\eps)$, we need to approximate the 
  directional derivative of $\Gamma_{k-1}(x,\eps)$ along 
  $f(x,\Gamma_{k-1}(x,\eps))$. For example, in \cref{alg2} at a 
  microscopic time step $t = t_n$, we need $\frac{\Delta_{+} 
  y}{\tau}$ 
  to approximate $\nabla_x \Gamma_{k-1}(x_n, \eps) f(x_n, \Gamma_{k - 
  1}(x_n, \eps))$. At the next time step $t = t_{n+1}$, one can use 
  $\hg_k(x_n, \eps) + \frac{\Delta_{+} y}{\tau} \Delta t$ as 
  $y_{n+1}$, which may reduce sampling error. Similar strategies can 
  be used in a Runge-Kutta type macroscopic solver.
\end{remark}

\begin{remark}
  \label{remark:central}
  In both algorithms, we use the forward difference formula to 
  approximate the directional derivatives. We may use higher-order 
  difference formulas for higher accuracy, for example, the central 
  difference formula. See \cref{remark:hd}.
\end{remark}

\section{Numerical Analysis}
\label{sec:analysis}

Now we present some results of numerical analysis on our algorithms. 
We focus on \cref{alg2}. Some results here are also 
applicable to \cref{alg1}. 

As mentioned before, there are three main sources of errors of HMM, 
including modeling error, sampling error and truncation error of the 
macroscopic solver. We have proven that the modeling error can be 
reduced to $\mathcal{O}(\eps^{k+1})$. The truncation error of the 
macroscopic solver depends on what the macroscopic solver is. In 
classical HMM, sampling error mainly comes from the microscopic 
solver. Numerical analysis of sampling error in classical HMM can be 
found in \cite{Weinan2003Analysis}. In our algorithms, another source 
of sampling error is numerical derivatives. In addition, numerical 
error may accumulate in our recursive algorithms.

\subsection{Numerical derivatives}
\label{sec:analysis_nd}
Now we analyze the sampling error generated by the numerical 
derivatives directly. In this part, the round-off error and error 
from the microscopic solver is disregarded. In other words, we would 
like to analyze the error between $\Gamma_k(x,\eps)$ and 
$\hgd_k(x,\eps)$, where $\Gamma_k(x,\eps)$ and $\hgd_k(x,\eps)$ are 
defined as follows:
\begin{equation}\label{equ:iter_nd}
	\begin{aligned}
    &\Gamma_0(x,\eps) = \hgd_0(x,\eps)=\gamma(x).& \\
		&g(x,\Gamma_{k+1}(x,\eps)) = 
		\eps\nabla_x\Gamma_k(x,\eps)f(x,\Gamma_k(x,\eps)),&\
		 k\in\mathbb{N},\\
		&g(x,\hgd_{k+1}(x,\eps)) = 
		\eps\frac{\hgd_k(x+f(x,\hgd_k(x,\eps))\tau,\eps) - 
		\hgd_k(x,\eps)}{\tau},&\
		 k\in\mathbb{N}.\\
	\end{aligned}
\end{equation}
The last equality of \cref{equ:iter_nd} can also be written as 
\begin{displaymath}
  \hat{\Gamma}_{k+1}^\rd(x,\eps) = \tilde{g}_x^{-1} \left( 
  \eps\frac{\hgd_k(x+f(x,\hgd_k(x,\eps))\tau,\eps) - 
    \hgd_k(x,\eps)}{\tau} \right).
\end{displaymath}
We have the following result. The proof of \cref{thm:nd} can 
be found in \cref{app:thm_iteration}.
\begin{theorem}
  \label{thm:nd}
  For any $k = 0, 1, \ldots, \lfloor \frac{K}{2} \rfloor$,
  \begin{equation}
    \label{equ:nd}
    \norm{ \Gamma_k(\cdot,\eps) - \hgd_k(\cdot,\eps) 
    }_{K-2k,\infty} = \mathcal{O}(\eps\tau).
  \end{equation}
\end{theorem}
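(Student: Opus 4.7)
The approach is induction on $k$. The base case $k = 0$ is trivial since $\Gamma_0 = \hgd_0 = \gamma$ and $\gamma \in W^{K,\infty}$ by \cref{remark:expression}. For the inductive step, I would strengthen the hypothesis to propagate not only $\norm{\Gamma_k - \hgd_k}_{K - 2k, \infty} = \mathcal{O}(\eps\tau)$ but also a companion a priori bound $\norm{\hgd_k(\cdot,\eps)}_{K - 2k, \infty} \le C_k$ uniform in small $\eps$; the latter is indispensable for controlling the Taylor remainder of the finite-difference at each level.

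Subtracting the two iteration identities in \cref{equ:iter_nd} at level $k+1$ yields
\begin{equation*}
  g(x, \Gamma_{k+1}) - g(x, \hgd_{k+1}) = \eps \left[ \nabla_x \Gamma_k(x,\eps) f(x, \Gamma_k) - \frac{\hgd_k(x + f(x,\hgd_k)\tau,\eps) - \hgd_k(x,\eps)}{\tau} \right].
\end{equation*}
Strong dissipativity of $g(x,\cdot)$ in \cref{assump:dissipative}, equivalently the $1/\beta$-Lipschitz property of $\tilde{g}_x^{-1}$ from \cref{remark:monotone}, gives $|\Gamma_{k+1} - \hgd_{k+1}| \le \tfrac{\eps}{\beta} \cdot |\text{bracket}|$ pointwise. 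Split the bracket as (A) $\nabla_x\Gamma_k f(x,\Gamma_k) - \nabla_x\hgd_k f(x,\hgd_k)$, controlled by the inductive error bound via its $W^{1,\infty}$ component together with Lipschitz continuity of $f$, and (B) the forward-difference remainder $\nabla_x \hgd_k f(x,\hgd_k) - \tau^{-1}(\hgd_k(x+f(x,\hgd_k)\tau,\eps) - \hgd_k(x,\eps)) = -\tfrac{\tau}{2}\nabla^2\hgd_k(\xi,\eps)[f,f]$ for some intermediate $\xi$. Part (A) is $\mathcal{O}(\eps\tau)$ by induction and part (B) is $\mathcal{O}(\tau\norm{\hgd_k}_{2,\infty})$, so $\norm{\Gamma_{k+1} - \hgd_{k+1}}_{0,\infty} = \mathcal{O}(\eps\tau)$.

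To lift this estimate to $W^{K-2k-2,\infty}$, I would differentiate the implicit relation $g(x,\hgd_{k+1}(x,\eps)) = \mathrm{RHS}(x)$ up to order $K - 2k - 2$; since $\nabla_y g$ is uniformly invertible, the implicit function theorem expresses each derivative of $\hgd_{k+1}$ as a polynomial in lower-order derivatives of $g$, $f$ and $\hgd_k$. Differentiating part (A) to order $j$ consumes $j + 1$ derivatives of $\Gamma_k - \hgd_k$ (available while $j + 1 \le K - 2k$), while differentiating the Taylor remainder in part (B) to order $j$ consumes $j + 2$ derivatives of $\hgd_k$ (available while $j + 2 \le K - 2k$). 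The tighter constraint is precisely $j \le K - 2(k+1)$, which matches the target index. The composition with the shifted argument $x + f(x,\hgd_k)\tau$ is handled cleanly by the Sobolev chain-rule lemmas in \cref{app:thm:lemma}.

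The main obstacle is the joint regularity bookkeeping: each iteration consumes exactly two derivatives of $\hgd_k$ (one from the translation in the finite-difference picking up $\nabla \hgd_k$, and one from the second-order Taylor remainder), so the induction must simultaneously propagate the uniform-in-$\eps$ regularity bound on $\hgd_k$ alongside the error bound. That parallel bound is itself obtained by implicit differentiation and the chain-rule lemmas at each level, with constants depending on $k$ and $\beta$ but independent of $\eps$ for $\eps$ sufficiently small. The restriction $k \le \lfloor K/2 \rfloor$ is exactly the condition under which the regularity budget supplied by \cref{assump:bound} is not yet exhausted.
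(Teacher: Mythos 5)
Your proposal matches the paper's proof step for step: same induction, same use of the $\tfrac{1}{\beta}$-Lipschitz inverse of $g(x,\cdot)$ (encapsulated in the paper as \cref{lemma:basic}) to transfer error from the right-hand sides of \cref{equ:iter_nd}, the same two-term split into a difference-of-directional-derivatives piece (the paper's \cref{lemma:rhs}) plus a forward-difference Taylor remainder (the paper's \cref{lemma:diff}), and the same $K-2k-2$ derivative budget yielding the cutoff $k\le\lfloor K/2\rfloor$. One small imprecision worth flagging: the companion a priori bound you propose to propagate should be on $\nabla\hgd_k$ in $W^{K-2k-1,\infty}$ rather than on $\hgd_k$ itself in $W^{K-2k,\infty}$ --- the $\ml^\infty$ part of the latter may be infinite since $\gamma$ need not be bounded --- but only the gradient bound is actually used (in the Taylor remainder and in \cref{lemma:basic,lemma:rhs}), and it follows automatically from the inductive error bound together with the uniform bound on $\nabla\Gamma_k$ from \cref{lemma:iter}, so the argument goes through unchanged.
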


\begin{remark}
  \cref{thm:nd} is a little counter-intuitive. A direct 
  analysis may go as follows. Let $e_k^\rd =  \norm{\Gamma_k(\cdot, 
  \eps) - \hgd_k(\cdot,\eps)}_{0,\infty} $, then
  \begin{equation*}
    \begin{aligned}
      g(x, \hgd_{k+1}(x,\eps)) & = \eps \frac{\hgd_k(x + 
      f(x,\hgd_k(x,\eps)) \tau, \eps) - \hgd_k(x,\eps)}{\tau}\\
      & = \eps \frac{{\Gamma}_k(x + f(x,{\Gamma}_k(x, \eps))\tau, 
      \eps) - {\Gamma}_k(x,\eps)}{\tau} + \mathcal{O}(\frac{\eps
        e_k^\rd}{\tau})\\
      & = \eps \nabla_x \Gamma_k (x, \eps) f(x, \Gamma_k(x, \eps)) + 
      \mathcal{O}(\frac{\eps e_k^\rd}{\tau} + \eps \tau)\\
      & = g(x, \Gamma_{k+1}(x,\eps)) + \mathcal{O} (\frac{\eps 
      e_k^\rd} 
      {\tau} + \eps \tau).
    \end{aligned}
  \end{equation*}
  Therefore, $e_{k+1}^\rd = \mathcal{O}(\frac{\eps e_k^\rd}{\tau} + 
  \eps 
  \tau)$. In this way, one can calculate that 
  \begin{equation*}
    e_0^\rd = 0,\ e_1^\rd = \mathcal{O}(\eps\tau),\ 
    e_2^\rd = \mathcal{O}(\eps^2+\eps\tau),\ 
    e_3^\rd = \mathcal{O}(\frac{\eps^3}{\tau}+\eps^2+\eps\tau),\ 
    \ldots.
  \end{equation*}
  However, this is not a good estimate. For example, when $k=2$, the 
  modeling error turns out to be $\mathcal{O}(\eps^3)$, while the 
  bound for the sampling error is $\mathcal{O}(\eps^2+\eps\tau)$. 
  Actually, if we have some additional assumptions on the regularity 
  of $f$ and $g$, then we can improve the bound for this type of 
  error to $\mathcal{O}(\eps\tau)$. 
\end{remark}

\begin{remark}
  \label{remark:hd}
  In the spirit of \cref{remark:central}, we can use central 
  difference formula rather than forward difference formula. Notice 
  \cref{lemma:highOrderD_7} in \cref{lemma:highOrderD},  
  we can prove that for any $k = 0, 1, \ldots, \lfloor \frac{K}{3} 
  \rfloor$,
  \begin{equation*}
    \norm{ \Gamma_k(\cdot,\eps) - \hgd_k(\cdot,\eps) 
    }_{K-3k,\infty} = \mathcal{O}(\eps\tau^2),
  \end{equation*}
  where $\hgd_k$ is replaced by central difference formula. The proof 
  is exactly like that of \cref{thm:nd}.
\end{remark}

\subsection{Accumulation of numerical error}
\label{sec:ana_accum}
Due to the round-off error and error from the microscopic solver, one 
cannot obtain $\hat{\Gamma}^\rd_k(x,\eps)$ exactly as in 
\cref{equ:iter_nd}. This may lead to accumulation of numerical error. 
In this part, we aim to analyze the error between 
$\hat{\Gamma}_k^\rd$ and $\hat{\Gamma}_k^\rr$, where 
$\hat{\Gamma}_k^\rd$ is defined in \cref{equ:iter_nd}, and 
$\hat{\Gamma}_k^\rr$ satisfies that 
\begin{displaymath}
  \begin{aligned}
    & \norm{\hat{\Gamma}_0^\rr(\cdot,\eps) - \gamma}_{0,\infty} = 
    \eta_0, 
    \\
    &  \norm{\hat{\Gamma}_{k+1}^\rr(\cdot,\eps) - 
    \tilde{g}_{x}^{-1} \left( 
    \eps \frac{\hat{\Gamma}_k^\rr(\cdot + f(\cdot, 
    \hat{\Gamma}_k^\rr(\cdot,\eps)) \tau, \eps) - 
    \hat{\Gamma}_k^\rr(\cdot,\eps)}{\tau} \right)}_{0,\infty}  = 
    \eta_{k+1},\ k \in \mathbb{N}. 
  \end{aligned}
\end{displaymath}

\begin{theorem}\label{thm:accum}
  For any $k = 0, 1, \ldots, \lfloor \frac{K+1}{2} \rfloor$, 
  \begin{displaymath}
    \norm{\hat{\Gamma}_k^\rr(\cdot, \eps) - \hat{\Gamma}_k^\rd(\cdot, 
    \eps)}_{0, \infty} = \mathcal{O}\left( \sum_{j=0}^{k} 
    \frac{\eps^j}{\tau^j}\eta_{k-j} \right).
  \end{displaymath}
\end{theorem}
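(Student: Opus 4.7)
The plan is to show by induction on $k$ that the pointwise error
$E_k := \norm{\hat{\Gamma}_k^\rr(\cdot,\eps) - \hgd_k(\cdot,\eps)}_{0,\infty}$
satisfies a one-step linear recursion of the form
$E_{k+1} \le \eta_{k+1} + C\,(\eps/\tau)\,E_k$
with a constant $C$ independent of $\eps$, $\tau$ and $k$ in the stated range, and then simply unroll it. The base case is trivial since $\hgd_0 \equiv \gamma$ and $\norm{\hat{\Gamma}_0^\rr - \gamma}_{0,\infty} = \eta_0$ by hypothesis.

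For the induction step, I would start from the defining relations for $\hat{\Gamma}_{k+1}^\rr$ and $\hgd_{k+1}$, invert using $\tilde{g}_x^{-1}$, and invoke its $\frac{1}{\beta}$-Lipschitz continuity (see \cref{remark:monotone}). This reduces the problem to bounding $\norm{\Delta^\rr - \Delta^\rd}_{0,\infty}$, where
\begin{displaymath}
  \Delta^{\bullet}(x) := \hat{\Gamma}_k^{\bullet}\bigl(x + f(x, \hat{\Gamma}_k^{\bullet}(x,\eps))\tau,\, \eps\bigr) - \hat{\Gamma}_k^{\bullet}(x,\eps),
\end{displaymath}
with $\bullet \in \{\rr, \rd\}$. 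Inserting the intermediate quantities $\hgd_k(x + f(x,\hat{\Gamma}_k^\rr(x,\eps))\tau, \eps)$ and $\hgd_k(x,\eps)$ and applying the triangle inequality splits this difference into three pieces: two ``value swap'' terms, each controlled directly by $E_k$, and one ``shift direction'' term estimated using the Lipschitz continuity of $\hgd_k$ in $x$ together with that of $f$ in $y$, producing an extra factor of $C\tau E_k$. Multiplication by $\eps/(\beta\tau)$ and adding the one-step sampling error $\eta_{k+1}$ yields the claimed recursion.

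The main obstacle is establishing a uniform Lipschitz bound on $\hgd_k(\cdot,\eps)$ that does not depend on $\eps$, $\tau$ or $k$ in the admissible range; this is exactly the regularity needed to absorb the ``shift direction'' term. The restriction $k \le \lfloor (K+1)/2 \rfloor$ enters here: by \cref{thm:nd} one has $\norm{\Gamma_k - \hgd_k}_{K-2k,\infty} = \mathcal{O}(\eps\tau)$, so provided $K - 2k \ge 1$ the finite-difference iterate $\hgd_k$ inherits, up to a vanishing perturbation, the uniform regularity of $\Gamma_k$ guaranteed by \cref{assump:bound} together with the auxiliary lemmas in \cref{app:thm:lemma}. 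This keeps $C$ finite and $\eps$-independent.

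Unrolling the recursion then gives
\begin{displaymath}
  E_k \le \sum_{j=0}^{k} C^{j}\,\frac{\eps^{j}}{\tau^{j}}\,\eta_{k-j},
\end{displaymath}
which is precisely the asserted $\mathcal{O}\!\bigl(\sum_{j=0}^{k} (\eps^j/\tau^j)\,\eta_{k-j}\bigr)$ bound.
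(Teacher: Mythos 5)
Your proposal is correct and follows essentially the same argument as the paper's proof: both insert the intermediate iterates to split the difference into the one-step perturbation $\eta_{k+1}$, a ``value swap'' piece controlled directly by $E_k$, and a ``shift-direction'' piece controlled by the Lipschitz continuity of $\hgd_k$ (which you correctly trace back to \cref{thm:nd} and the index restriction), then apply the $\frac{1}{\beta}$-Lipschitz bound for $\tilde{g}_x^{-1}$ to get $E_{k+1} = \mathcal{O}\bigl(\eta_{k+1} + \tfrac{\eps}{\tau}E_k\bigr)$ and unroll by induction.
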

\begin{proof}
  Let $e_k^\rr = \norm{\hat{\Gamma}_k^\rr(\cdot, \eps) - 
  \hat{\Gamma}_k^\rd(\cdot, \eps)}_{0, \infty}$. Notice that when $k 
  = 0, 1, \ldots, \lfloor \frac{K-1}{2} \rfloor$, 
  $\nabla_x \hat{\Gamma}_{k}^\rd(x,\eps)$ is bounded by \cref{thm:nd}.
  One can obtain by \cref{remark:monotone} that 
  \begin{displaymath}
    \begin{aligned}
      & \left| \hat{\Gamma}_{k+1}^\rr(x, \eps) - 
      \hat{\Gamma}_{k+1}^\rd(x, \eps) \right| \\
      \le \; & \Bigg| \hat{\Gamma}_{k+1}^\rr(x,\eps) - 
      \tilde{g}_x^{-1} \Bigg( 
      \eps \frac{\hat{\Gamma}_k^\rr(x + f(x, 
        \hat{\Gamma}_k^\rr(x,\eps)) \tau, \eps) - 
        \hat{\Gamma}_k^\rr(x,\eps)}{\tau} \Bigg) \Bigg| \\
      & \quad + \Bigg| \tilde{g}_x^{-1} 
      \Bigg( \eps \frac{\hat{\Gamma}_k^\rr(x + f(x, 
        \hat{\Gamma}_k^\rr(x,\eps)) \tau, \eps) - 
        \hat{\Gamma}_k^\rr(x,\eps)}{\tau} \Bigg) \\
      & \qquad\qquad - \tilde{g}_x^{-1} 
      \Bigg( \eps \frac{\hat{\Gamma}_k^\rd(x + f(x, 
      \hat{\Gamma}_k^\rr(x,\eps)) \tau, \eps) - 
      \hat{\Gamma}_k^\rd(x,\eps)}{\tau} \Bigg)\Bigg| \\
      & \quad + \Bigg| \tilde{g}_x^{-1} 
      \Bigg( \eps \frac{\hat{\Gamma}_k^\rd(x + f(x, 
        \hat{\Gamma}_k^\rr(x,\eps)) \tau, \eps) - 
        \hat{\Gamma}_k^\rd(x,\eps)}{\tau} \Bigg) \\
      & \qquad\qquad - \tilde{g}_x^{-1} 
      \Bigg( \eps \frac{\hat{\Gamma}_k^\rd(x + f(x, 
      \hat{\Gamma}_k^\rd(x,\eps)) \tau, \eps) - 
      \hat{\Gamma}_k^\rd(x,\eps)}{\tau} \Bigg) \Bigg| \\
      \le \; & \mathcal{O}\left(\eta_{k+1} + \frac{\eps}{\tau} 
      e_k^\rr \right).
    \end{aligned}
  \end{displaymath}
  Therefore, $e_{k+1}^\rr = \mathcal{O}\left( \eta_{k+1} 
  +\frac{\eps}{\tau} e_k^\rr \right)$, and then the proof is 
  completed by induction.
\end{proof}
\begin{remark}\label{remark:accum}
  For fixed $k$, let $\eta = \max\limits_{j = 0, 1, \ldots, k} 
  \eta_j$.
  By \cref{thm:accum}, if $\eps = \mathcal{O}(\tau)$, then 
  $$\norm{\hat{\Gamma}_k^\rr(\cdot, \eps) - 
    \hat{\Gamma}_k^\rd(\cdot, \eps)}_{0, \infty} = \mathcal{O} 
  \left( \eta \right);$$ if $\tau = o(\eps)$, then 
  $$\norm{\hat{\Gamma}_k^\rr(\cdot, \eps) - \hat{\Gamma}_k^\rd(\cdot, 
  \eps)}_{0, \infty} = \mathcal{O} \left( \frac{\eps^k}{\tau^k} \eta 
  \right).$$
\end{remark}

\subsection{Global error}
In \Cref{sec:analysis_nd,sec:ana_accum}, we have analyzed the 
numerical error when calculating $\Gamma_k(x,\eps)$. Now we would 
like to figure out how it influences the global error. Let 
\begin{displaymath}
  \kappa = \norm{\Gamma_k(\cdot, \eps) - \hg_k(\cdot, \eps)}_{0, 
  \infty} < +\infty.
\end{displaymath}
We would like to compare the solutions of the following two ODEs:
\begin{displaymath}
  \od{X_k}{t} = f(X_k, \Gamma_k(X_k, \eps)),
\end{displaymath}
and 
\begin{displaymath}
  \od{\hat{X}_k}{t} = f(\hat{X}_k, \hg_k(\hat{X}_k, \eps)).
\end{displaymath}
By Gronwall's inequality and the boundedness of $\nabla_x 
\Gamma_k(x,\eps)$ when $k = 0,1,\ldots,K$ (See 
\cref{lemma:iter}), one can deduce the following proposition. By 
\cref{prop:global}, one obtains that the order of global error 
between $X_k(t)$ and $\hat{X}_k(t)$ in a finite time horizon is the 
same as order of $\ml^\infty$-error between $\Gamma_k(\cdot, \eps)$ 
and $\hg_k(\cdot, \eps)$, if $X_k(0) = \hat{X}_k(0)$.
\begin{proposition}\label{prop:global}
  For $k = 0, 1, \ldots, K$, there exists a constant $C > 0$ such 
  that 
  \begin{displaymath}
    |X_k(t) - \hat{X}_k(t)|^2 \le e^{Ct} \left( |X_k(0) - 
    \hat{X}_k(0)|^2 + \kappa^2 \right).
  \end{displaymath}
\end{proposition}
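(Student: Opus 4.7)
The plan is to apply Gronwall's inequality to $|X_k(t) - \hat{X}_k(t)|^2$, mirroring the strategy employed in the proofs of \cref{thm:invariant} and \cref{thm:iteration_invariant}. First, I would differentiate $|X_k - \hat{X}_k|^2$ in time, obtaining
\begin{displaymath}
  \tfrac{1}{2}\od{|X_k - \hat{X}_k|^2}{t}
  = \ip{X_k - \hat{X}_k}{f(X_k, \Gamma_k(X_k, \eps)) - f(\hat{X}_k, \hg_k(\hat{X}_k, \eps))}.
\end{displaymath}

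Next, I would insert the intermediate term $f(\hat{X}_k, \Gamma_k(\hat{X}_k, \eps))$ and split the inner product into two pieces. The first piece is $f(X_k, \Gamma_k(X_k,\eps)) - f(\hat{X}_k, \Gamma_k(\hat{X}_k, \eps))$, which is controlled by $C_1 |X_k - \hat{X}_k|$ since the composite map $x \mapsto f(x, \Gamma_k(x,\eps))$ is Lipschitz: $f$ is smooth by \cref{assump:bound} and $\nabla_x \Gamma_k(\cdot,\eps)$ is bounded uniformly in $\eps$ for $k = 0,1,\ldots,K$ by the referenced lemma (\cref{lemma:iter}). The second piece is $f(\hat{X}_k, \Gamma_k(\hat{X}_k,\eps)) - f(\hat{X}_k, \hg_k(\hat{X}_k,\eps))$, which is bounded by $C_2 |\Gamma_k(\hat{X}_k,\eps) - \hg_k(\hat{X}_k,\eps)| \le C_2 \kappa$ using Lipschitz continuity of $f$ in its second argument together with the hypothesis on $\kappa$.

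Combining these bounds with the Cauchy--Schwarz inequality and the elementary estimate $ab \le \tfrac{1}{2}(a^2 + b^2)$, I would arrive at an inequality of the form
\begin{displaymath}
  \od{|X_k - \hat{X}_k|^2}{t} \le C\bigl(|X_k - \hat{X}_k|^2 + \kappa^2\bigr)
\end{displaymath}
for some constant $C > 0$ independent of $t$. A direct application of Gronwall's inequality then yields the claimed bound.

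The main obstacle, modest as it is, lies in ensuring the Lipschitz constant of the composite map $x \mapsto f(x,\hg_k(x,\eps))$ (or equivalently the dependence on $\hg_k$) does not enter the final constant $C$ in an uncontrolled way. Since the proposition's assumption only bounds $\hg_k$ in $\ml^\infty$, not its derivatives, one must exploit the splitting through the intermediate point $f(\hat{X}_k,\Gamma_k(\hat{X}_k,\eps))$ so that only the Lipschitz property of $\Gamma_k$ (not $\hg_k$) is required. Once this splitting is recognized, the remainder of the argument is a routine Gronwall estimate.
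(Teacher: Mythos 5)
Your proposal is correct and fills in exactly the argument the paper sketches (Gronwall plus the boundedness of $\nabla_x \Gamma_k$ from \cref{lemma:iter}). In particular, you correctly identify the one slightly delicate point --- that the decomposition must pass through $f(\hat{X}_k,\Gamma_k(\hat{X}_k,\eps))$ so that the Lipschitz property is needed only for $\Gamma_k$ and not for $\hg_k$, which is controlled only in $\ml^\infty$ --- and the rest is the routine Gronwall estimate the authors intended.
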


\section{Numerical Experiments}
\label{sec:num}
In this section, we perform numerical simulations on several 
examples 
to demonstrate numerical efficiency of our models and algorithms 
developed in the previous sections.

The numerical experiments are set up as follows. We conduct the 
simulations in the time interval $[0,T]$. We use the coupled solver 
in the interval $[0,T_c]$, where $T_c$ is determined by the criteria 
in \cref{remark:terminal}. 
The coupled solver utilizes the common $4$th-order explicit 
Runge-Kutta scheme (RK4) with time step $\Delta t_c$. The 
HMM-type algorithms are used in the interval $[T_c,T]$. The 
macroscopic solver utilizes RK4 with time step size 
$\Delta t$. 
The microscopic solver 
utilizes the Forward Euler scheme (FE) with number of steps $M$ and 
the time step size $\delta t=\alpha \eps$, where $\alpha$ is a 
given 
constant dependent on the problem. 
We compare the $\ell_2$-norm error of slow variable, that is, 
$|x(T) 
- x_N|$.
In all numerical experiments, the initial values $(x_0,y_0)$ and 
the 
terminal time $T$ are specified to ensure the dissipativity of 
the 
system on $[0,T]$.

\subsection{A naive example}
In this part, we use a naive example to test the accuracy and 
efficiency of our algorithms.
\begin{example}\label{exp:naive}
  Let us consider the following example:
  \begin{displaymath}
    \left\{
    \begin{aligned}
      & \od{x}{t} = y, \\
      & \od{y}{t} = \frac{1}{\eps} (x - y), \\
      & x|_{t = 0} = x_0, \ y|_{t = 0} = y_0,
    \end{aligned}
    \right.
  \end{displaymath}
  with $x_0 = 1$, $y_0 = 2$, $T = 4$. The exact solution of this 
  example is 
  \begin{displaymath}
    x(t) = \frac{-\lambda_2 x_0 + y_0}{\lambda_1 - \lambda_2} 
    e^{\lambda_1 t} + \frac{\lambda_1 x_0 - y_0}{\lambda_1 - 
    \lambda_2} e^{\lambda_2 t},
  \end{displaymath}
  where $\lambda_1 = - \frac{1 + \sqrt{1 + 4\eps}}{2 \eps}$ and 
  $\lambda_2 = - \frac{1 - \sqrt{1 + 4\eps}}{2 \eps}$.
  
  Parameters: $\eps = 1.0 \times 10^{-5}$, $\tau = 1.0 \times 
  10^{-5}$, 
  $M = 1$, $\alpha = 1.0$, $\Delta t_c = 1.0 \times 10^{-5}$, $\Delta 
  t = 5.0 \times 10^{-3}$, $\hat{\beta} = 1$.
  
  \cref{alg1} and the forward difference formula are adopted. We 
  compare our HMM$k$ solvers with the 
  coupled solver, with respect to the numerical error and the total 
  computing time. To ensure fairness when comparing the 
  accuracy of different HMM-type algorithms, we use the same $T_c$. 
  In other words, we take $k = 2$ in \cref{remark:terminal} for all 
  the HMM-type algorithms. The numerical results are reported in 
  \cref{tab:exp_naive}. 
\end{example}

\begin{table}[h]
  \centering
  \caption{Numerical results for \cref{exp:naive}.}
  \label{tab:exp_naive}
  \begin{tabular}{l|ccc}
    \hline\hline
    solver & error & time (s) & $T_c$ \\\hline
    coupled & 2.1832e-09 & 5.44 & 4.0e+00 \\
    HMM0 & 2.1836e-03 & 0.02 & 4.0e-04 \\
    HMM1 & 4.6017e-08 & 0.04 & 4.0e-04 \\
    HMM2 & 2.3441e-09 & 0.09 & 4.0e-04 \\
    \hline\hline
  \end{tabular}
\end{table}

For one thing, our high order numerical homogenization method 
reduces the numerical error, compared with the classical HMM$0$ 
solver. For another thing, the HMM$2$ solver uses far less time 
than the coupled solver to reach roughly the same error. In this 
numerical example, the time cost of HMM-type algorithms is mainly 
due to the macroscopic solver, since here it is easy to compute 
$\gamma(x)$ by the microscopic solver. By this numerical 
experiment, we exhibit the accuracy and efficiency of out method.

\subsection{Model validation}
In the previous section, we have already proven that the modeling 
error of HMM$k$ method turns out to be $\mathcal{O}(\eps^{k+1})$. 
In 
this part, we would like to test and verify this modeling error 
estimate by numerical experiments. Let us consider the following 
three numerical examples. For each numerical example, we compare 
numerical error between numerical solutions and the reference 
solution.

\begin{example}\label{exp:enzyme}
  Enzyme reaction equation 
  \cite{Carr1991Applications,Heineken1967On}:
  \begin{displaymath}
    \left\{
    \begin{aligned}
      &\od{x}{t} = -x + (x+c)y, \\
      &\od{y}{t} = \frac{1}{\eps} \left( x - (x+1)y \right), \\
      & x|_{t = 0} = x_0, \ y|_{t = 0} = y_0,
    \end{aligned}
    \right.
  \end{displaymath}
  with $x_0 = 1$, $y_0 = 0$, $c = 0.5$, $T = 1$.
  
  Parameters: $\tau = \e{1.0}{-6}$, $M = 10$, $\alpha = 0.5$, 
  $\Delta t_c = \e{1.0}{-5}$, $\hat{\beta} = 1.5$.
  
  \cref{alg1} and the central difference formula are adopted. The 
  reference solution is given by the 
  coupled solver with time step size $\Delta \tilde{t}_c = 
  1.00\times10^{-6}$.
  See \cref{fig2_1} for fixed $\Delta t = \e{1.0}{-2}$ and different 
  $\eps$.
  See \cref{fig2_2} for fixed $\eps = \e{1.0}{-2}$ and different 
  $\Delta t$.
\end{example}

\begin{figure}[p]
  \centering
  \subfigure[Different $\eps$ and fixed $\Delta t$]{
    \includegraphics[width=0.45\linewidth]{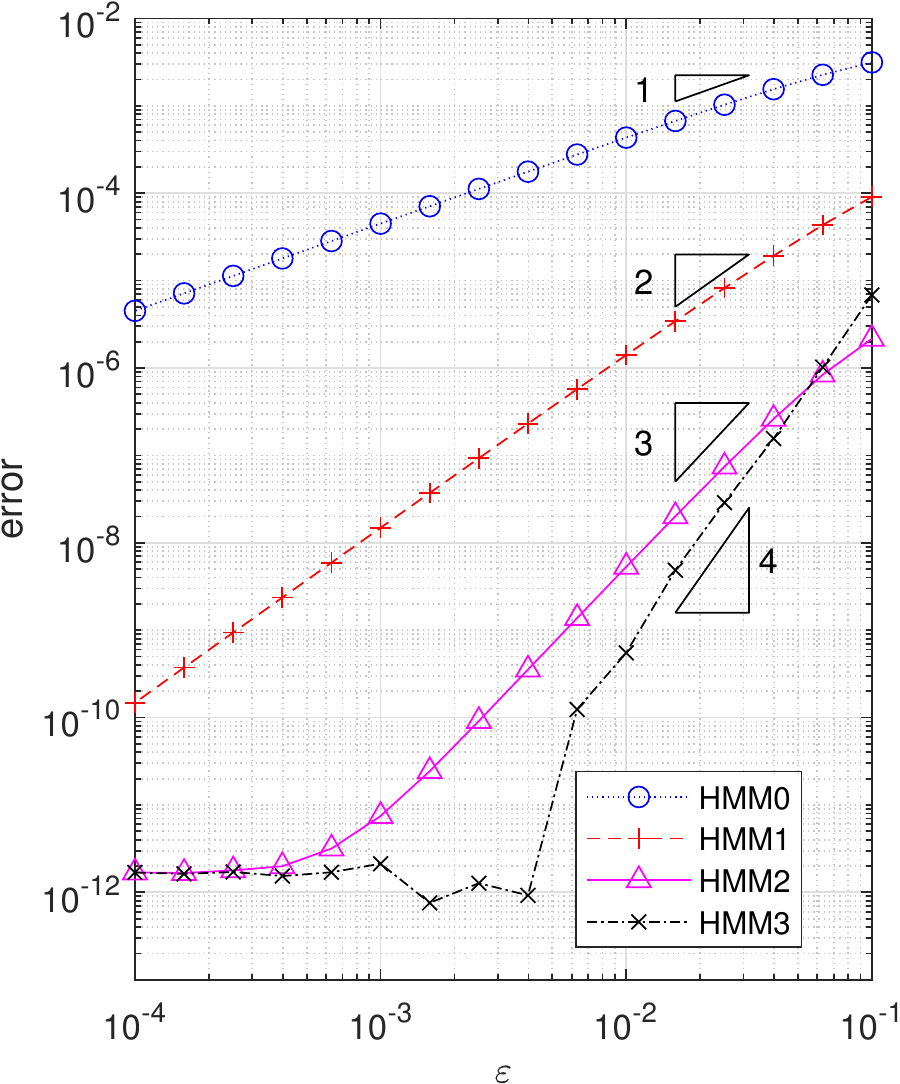}
    \label{fig2_1}
  }
  \quad
  \subfigure[Different $\Delta t$ and fixed $\eps$]{
    \includegraphics[width=0.45\linewidth]{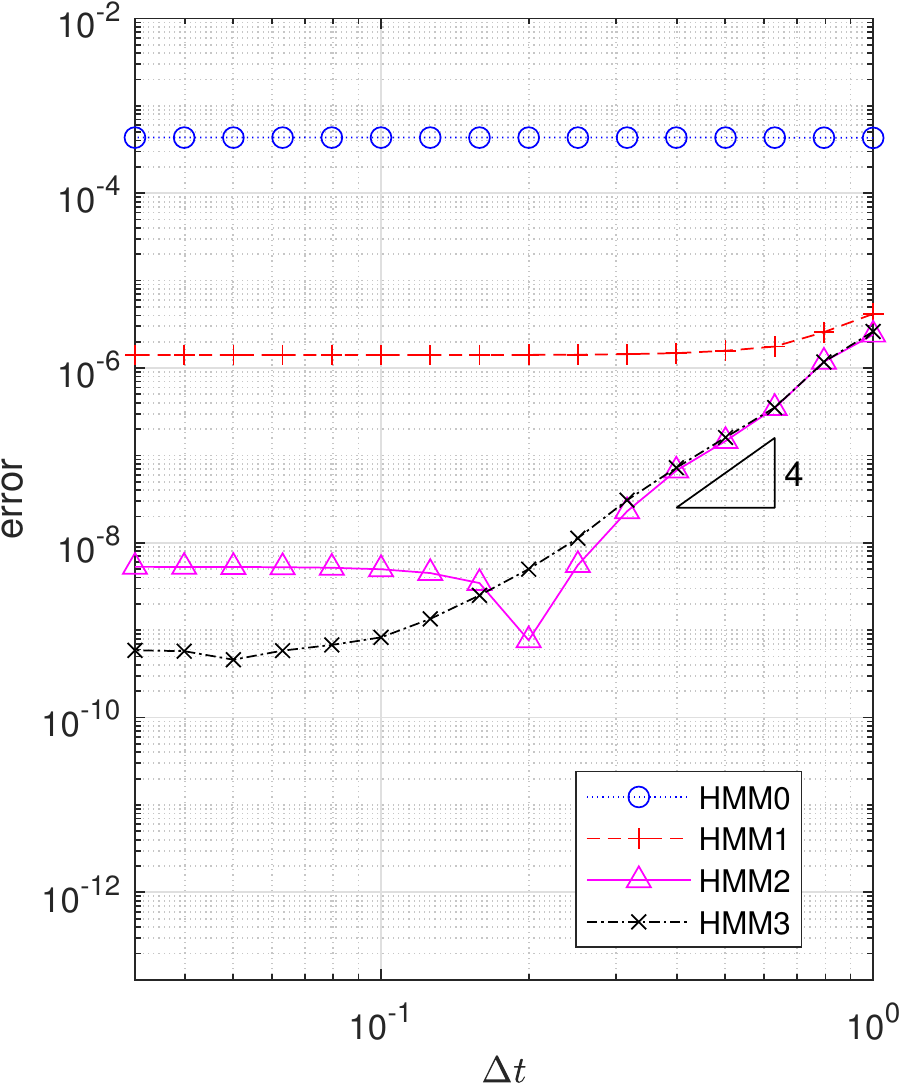}
    \label{fig2_2}
  }
  \caption{Numerical results for \cref{exp:enzyme}.}
  \label{fig2}
\end{figure}

\begin{example}\label{exp:fvdp}
  Forced Van der Pol equation \cite{Guckenheimer2003The}:
  \begin{displaymath}
    \left\{
    \begin{aligned}
      & \od{x^{(1)}}{t} = -y + a\sin(2\pi x^{(2)}),\\
      & \od{x^{(2)}}{t} = b,\\
      & \od{y}{t} = \frac{1}{\eps} \left( y + x^{(1)} - 
      \frac{1}{3} 
      y^3 \right), \\
      & x|_{t = 0} = (x^{(1)}, x^{(2)})|_{t = 0} = x_0, \ y|_{t = 
      0} 
      = y_0,
    \end{aligned}
    \right.
  \end{displaymath}
  with $x_0=(3,1)$, $y_0=1$, $a=2$, $b=1$, $T=1$.
  
  Parameters: $\tau=\e{1.0}{-6}$, $M=25$, $\alpha=0.1$, 
  $\Delta t_c = \e{1.0}{-5}$, $\hat{\beta} = 0.01$.
  
  \cref{alg2} and the forward difference formula are adopted. The 
  reference solution is given by the 
  coupled solver with time step size $\Delta t_c$.
  See \cref{fig5_1} for fixed $\Delta t = \e{1.0}{-2}$ and different 
  $\eps$.
  See \cref{fig5_2} for fixed $\eps=\e{1.0}{-4}$ and different 
  $\Delta t$.
\end{example}

\begin{figure}[p]
  \centering
  \subfigure[Different $\eps$ and fixed $\Delta t$]{
    \includegraphics[width=0.45\linewidth]{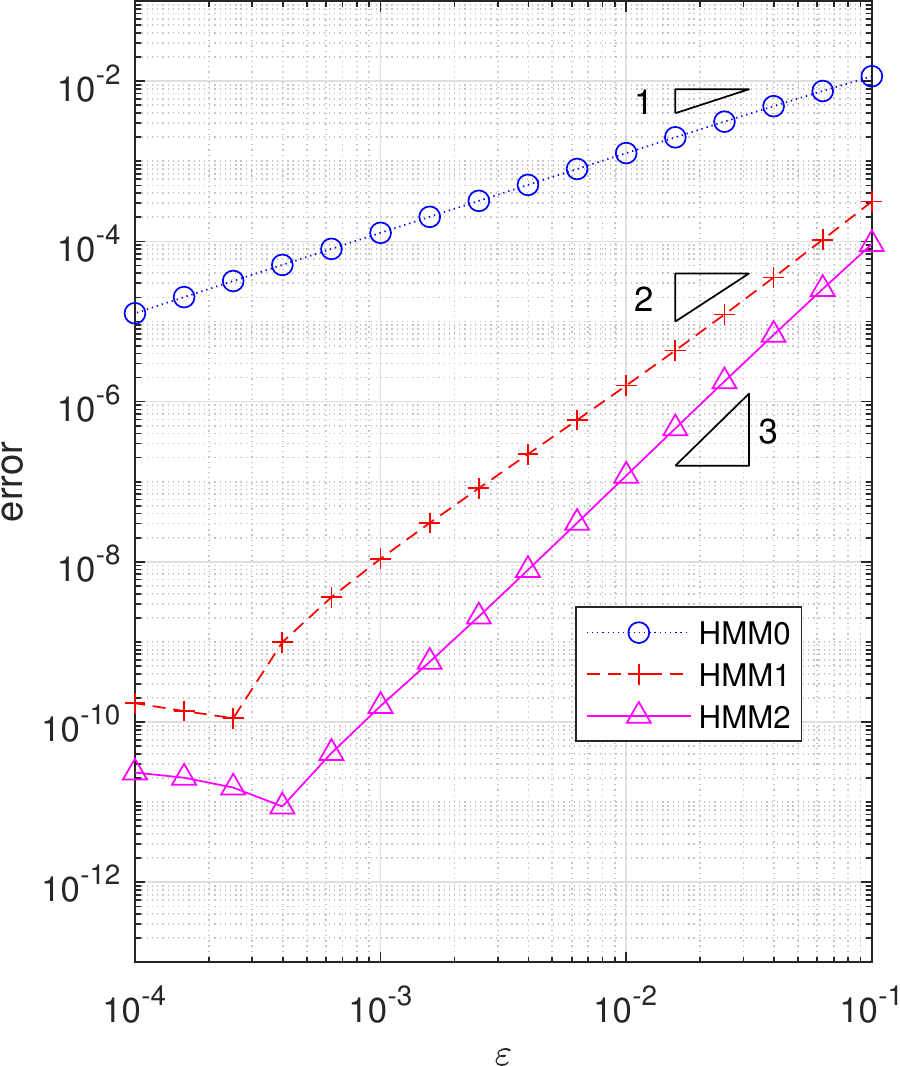}
    \label{fig5_1}
  }
  \quad
  \subfigure[Different $\Delta t$ and fixed $\eps$]{
    \includegraphics[width=0.45\linewidth]{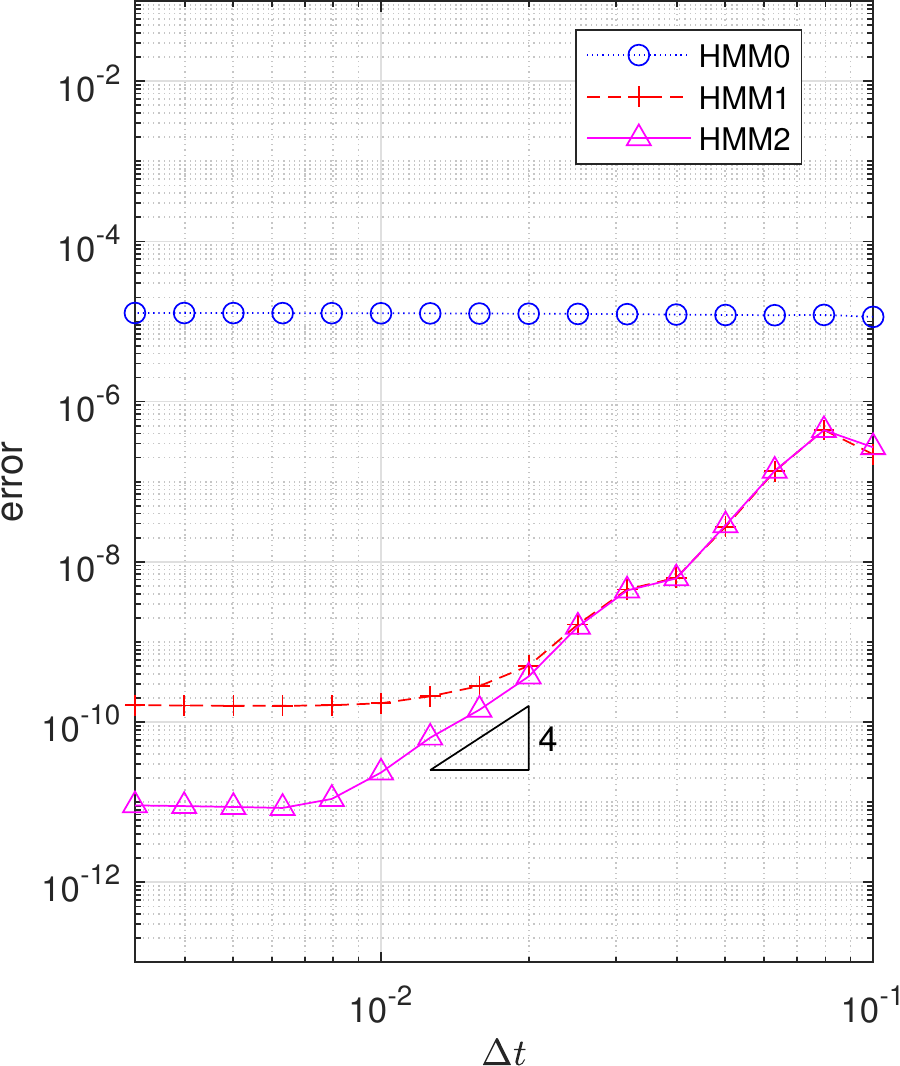}
    \label{fig5_2}
  }
  \caption{Numerical results for \cref{exp:fvdp}.}
  \label{fig5}
\end{figure}

\begin{example}
  \label{exp:6}
  Cubic Chua's model \cite{Chua1986The}:
  \begin{displaymath}
    \left\{
    \begin{aligned}
      & \od{x^{(1)}}{t} = -d x^{(2)}, \\
      & \od{x^{(2)}}{t} = -a y + x^{(1)} + b x^{(2)}, \\
      & \od{y}{t} = \frac{1}{\eps} (x^{(2)} -c_3 y^3 - c_2 y^2 - 
      c_1 
      y), \\
      & x|_{t = 0} = (x^{(1)}, x^{(2)})|_{t = 0} = x_0, \ y|_{t = 
      0} 
      = y_0,
    \end{aligned}
    \right.
  \end{displaymath}
  with $x_0 = (1,1)$, $y_0=1$, $a=0.7$, $b=0.25$, $c_1=7$, $c_2=15$, 
  $c_3=20$, $d =1$, $T=1$.
  
  Parameters: $\tau=\e{1.0}{-6}$, $M=10$, $\alpha=0.1$, 
  $\Delta t_c = \e{1.0}{-6}$, $\hat{\beta} = 10$.
  
  \cref{alg2} and the central difference formula are adopted. The 
  reference solution is given by the 
  coupled solver with time step size $\Delta t_c$.
  See \cref{fig6_1} for fixed $\Delta t = \e{1.0}{-2}$ and different 
  $\eps$.
  See \cref{fig6_2} for fixed $\eps = \e{1.0}{-2}$ and different 
  $\Delta t$.
\end{example}

\begin{figure}[p]
  \centering
  \subfigure[Different $\eps$ and fixed $\Delta t$]{
    \includegraphics[width=0.45\linewidth]{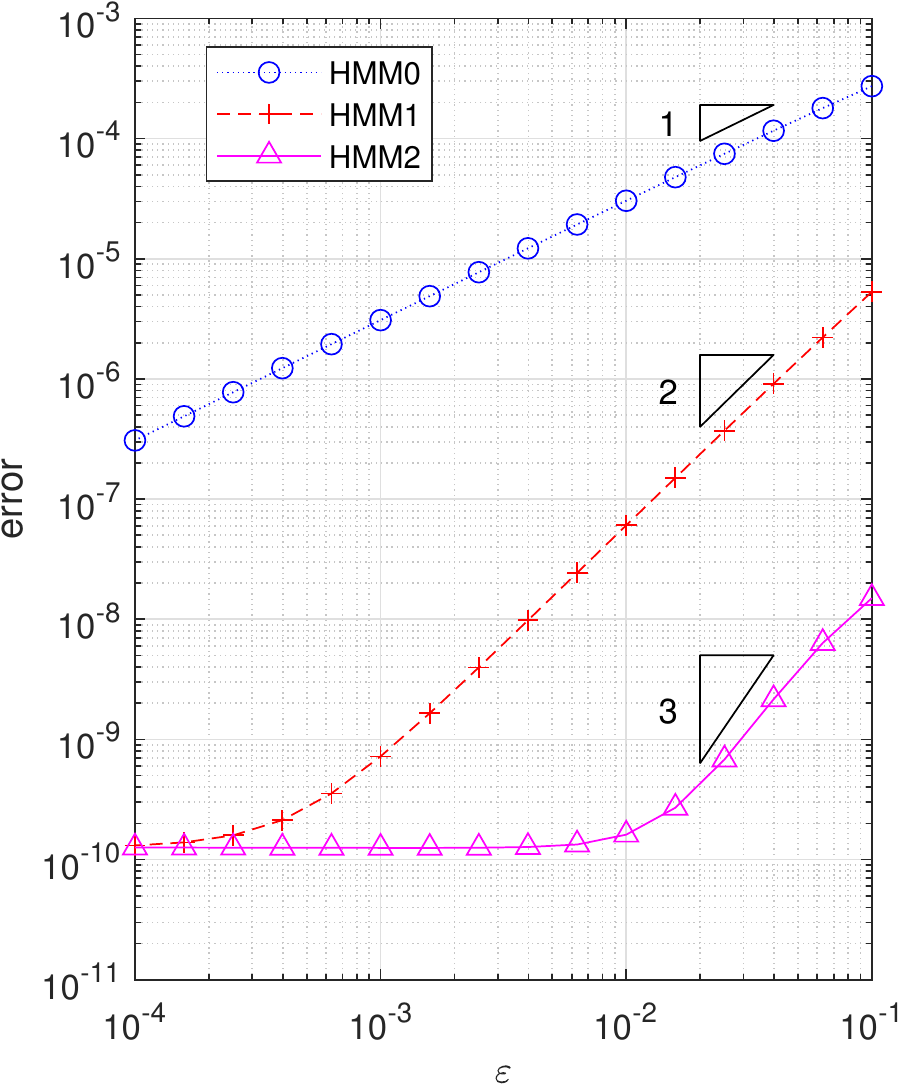}
    \label{fig6_1}
  }
  \quad
  \subfigure[Different $\Delta t$ and fixed $\eps$]{
    \includegraphics[width=0.45\linewidth]{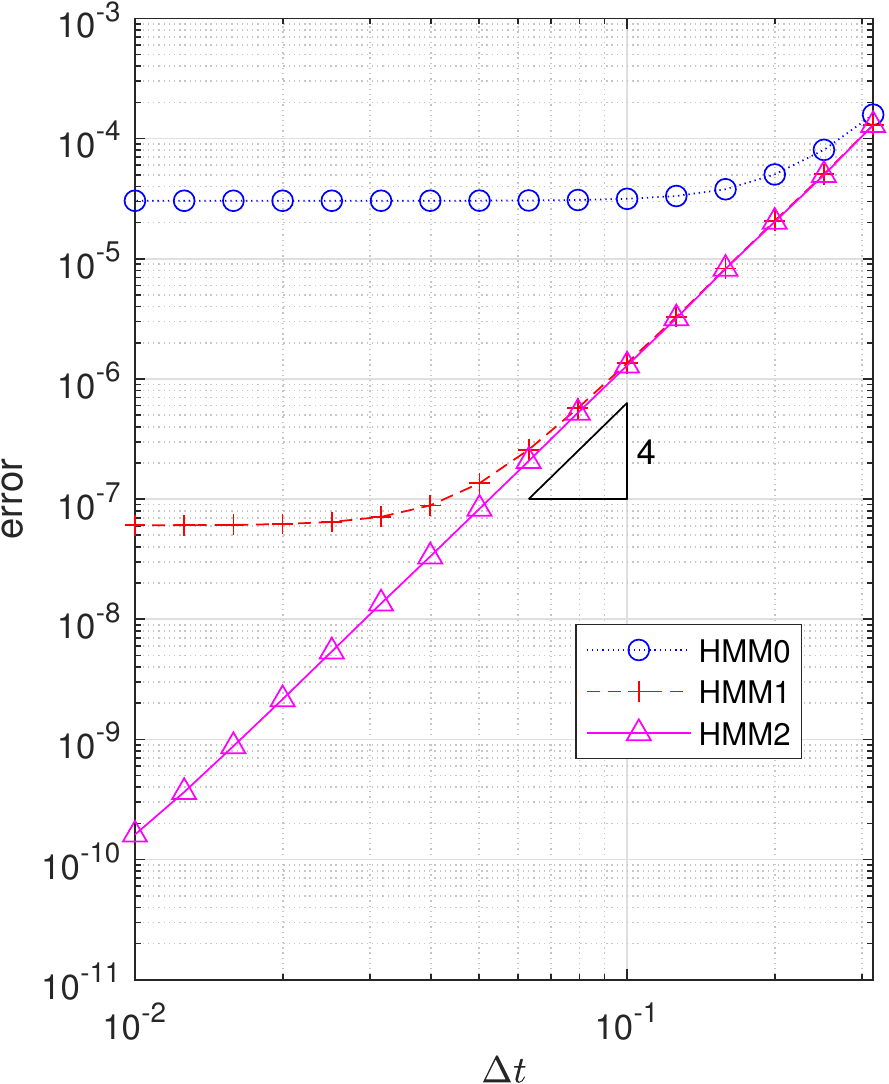}
    \label{fig6_2}
  }
  \caption{Numerical results for \cref{exp:6}.}
  \label{fig6}
\end{figure}

The results in \cref{fig2_1,fig5_1,fig6_1} show the order 
of numerical error. By numerical investigation, the theoretical 
result that the modeling error of HMM$k$ is of order 
$\mathcal{O}(\eps^{k+1})$ is verified. The results in 
\cref{fig2_2,fig5_2,fig6_2} indicate whether the modeling error or 
the truncation error of macroscopic solver takes a dominant position 
in the numerical error.


\subsection{Numerical derivative}
It has been proven that the sampling error generated by numerical 
derivatives is $\mathcal{O}(\eps \tau)$, if the forward difference 
formula is adopted. As seen in \cref{remark:hd}, when the central 
difference formula is adopted, this part of error turns out to be 
$\mathcal{O}(\eps \tau^2)$. Let us consider the following numerical 
example.

\begin{example}\label{exp:vdp}
  Van der Pol equation 
  \cite{Hairer1980Solving,Eriksson2012Explicit}:
  \begin{displaymath}
    \left\{
    \begin{aligned}
      & \od{x}{t} = y,\\
      & \od{y}{t} = -\frac{1}{\eps} \left( (x^2 - 1) y + x 
      \right), \\
      & x|_{t=0}=x_0, \ y|_{t=0}=y_0,
    \end{aligned}
    \right.
  \end{displaymath}
  with $x_0=4$, $y_0=2$, $T=5$.
  
  Parameters: $M=20$, $\alpha=0.1$, 
  $\Delta t_c = 1.0\times10^{-5}$, $\Delta t = \e{2.0}{-2}$, 
  $\hat{\beta} = 3$.
  
  \cref{alg2} is adopted. The reference solution is given by the 
  coupled solver with time step size $\Delta t_c$. We test the 
  numerical error for different $\tau$ and different $\eps$.
  See \cref{fig:tau1_1} for the forward difference formula.
  See \cref{fig:tau1_2} for the central difference formula.
\end{example}

\begin{figure}[p]
  \centering
  \subfigure[Forward difference formula]{
    \includegraphics[width=0.45\linewidth]{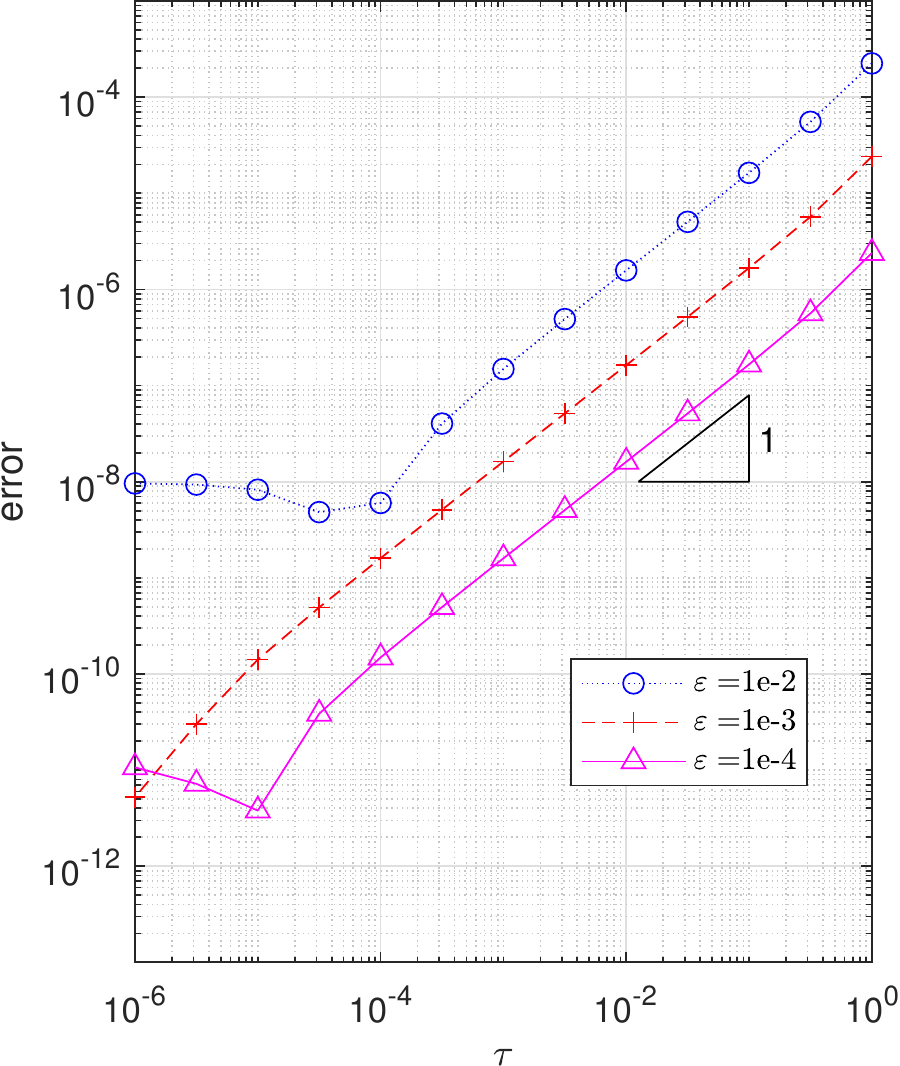}
    \label{fig:tau1_1}
  }
  \quad
  \subfigure[Central difference formula]{
    \includegraphics[width=0.45\linewidth]{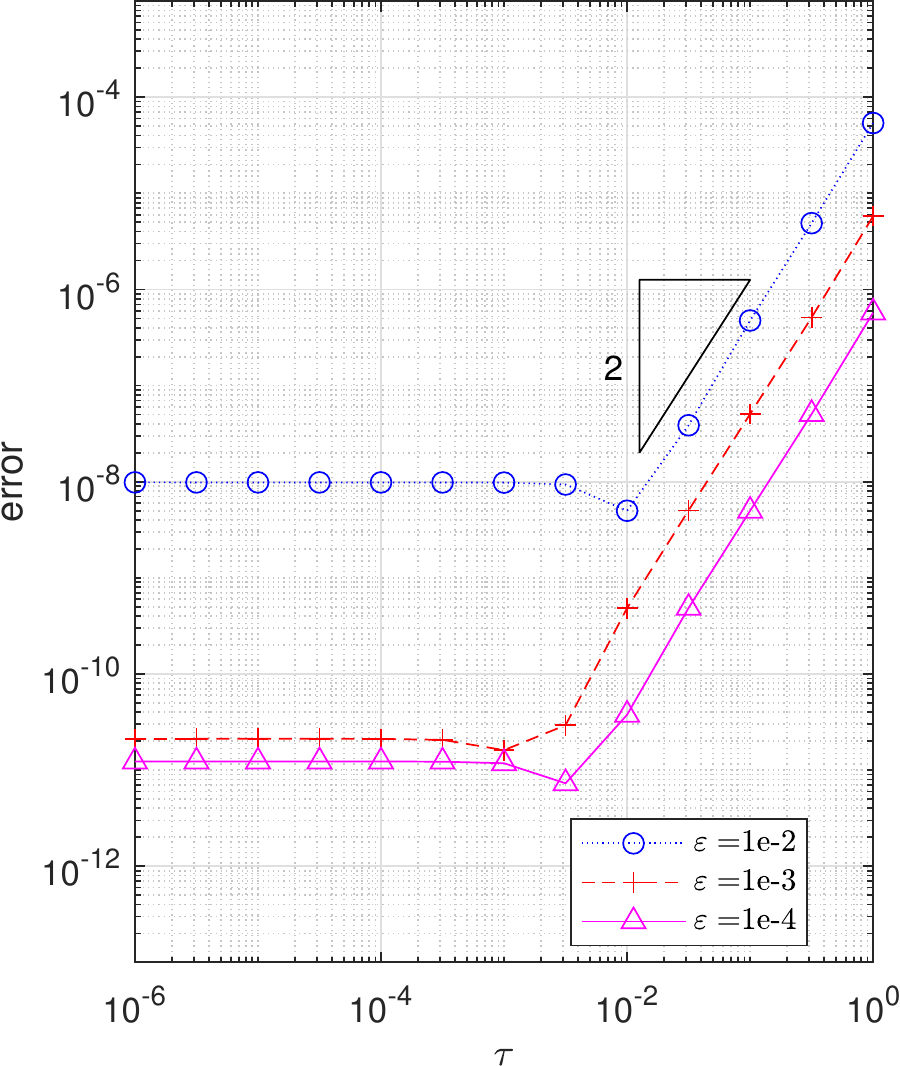}
    \label{fig:tau1_2}
  }
  \caption{Numerical results for \cref{exp:vdp}.}
  \label{fig:tau1}
\end{figure}

From the numerical results, one can see that, when $\tau$ is 
relatively large, the numerical error is approximately 
$\mathcal{O}(\eps \tau)$ for the forward difference formula, and 
$\mathcal{O}(\eps \tau^2)$ for the central difference formula.
Therefore, the theoretical analysis in \Cref{sec:analysis_nd} is 
verified.

\section{Conclusions}
\label{sec:concl}
We proposed a high-order numerical homogenization method for the dissipative ordinary 
differential equations. We develop the correction models based on the asymptotic 
approximations and a novel iterative formula. The corresponding numerical algorithms 
are designed in the framework of the heterogeneous multiscale methods. We provide some 
theoretical analysis on our algorithms. By numerical investigation, not only the error 
estimates are verified, but also the efficiency of our methods is exhibited.

\section*{Acknowledgements}
Zeyu Jin is supported by the Elite Undergraduate Training Program of
School of Mathematical Sciences in Peking University. Ruo Li is
partially supported by the National Key R\&D Program of China
(No. 2020YFA0712000) and the National Science Foundation in China (No.
11971041).

\appendix
\renewcommand{\appendixname}{Appendix}

\section{Well-posedness of \cref{equ:center_linear} for Small $\eps$}
\label{app:linear}
\begin{theorem}
	\label{thm:linear}
	Under the assumptions in \cref{example:linear_1}, for each 
	$M>\norm{A_{22}^{-1}A_{21}}$, there exists $\delta\in(0,\eps_0]$, 
	such that for each $\eps\in(0,\delta]$, there exists a unique 
	solution $(C^*,d^*)$ to \cref{equ:center_linear} such that 
	$\norm{C^*}\le M$. Furthermore, 
	$C^*+A_{22}^{-1}A_{21}=\mathcal{O}(\eps)$ and 
	$d^*+A_{22}^{-1}b_2=\mathcal{O}(\eps)$.
\end{theorem}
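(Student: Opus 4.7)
The plan is to recast the Riccati equation \cref{equ:center_linear_1} as a fixed-point problem and apply the Banach contraction mapping theorem on a closed ball in $\mathbb{R}^{n_y \times n_x}$, then treat \cref{equ:center_linear_2} as a linear equation in $d$ with an invertible coefficient matrix after the existence of $C^*$ has been established.

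First I would rewrite \cref{equ:center_linear_1} as the fixed-point equation
\begin{equation*}
  C = T_\eps(C) := -A_{22}^{-1} A_{21} + \eps A_{22}^{-1} \bigl( C A_{12} C + C A_{11} \bigr),
\end{equation*}
which is legitimate because $A_{22}$ is negative definite and hence invertible. Fix $M > \norm{A_{22}^{-1} A_{21}}$ and let $B_M := \{ C \in \mathbb{R}^{n_y \times n_x} : \norm{C} \le M \}$. A direct estimate gives
\begin{equation*}
  \norm{T_\eps(C)} \le \norm{A_{22}^{-1} A_{21}} + \eps \norm{A_{22}^{-1}} \bigl( M^2 \norm{A_{12}} + M \norm{A_{11}} \bigr),
\end{equation*}
so by choosing $\delta_1 > 0$ small enough that the second term is at most $M - \norm{A_{22}^{-1} A_{21}}$, the map $T_\eps$ sends $B_M$ into itself for all $\eps \in (0, \delta_1]$. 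A similar manipulation using $C_1 A_{12} C_1 - C_2 A_{12} C_2 = (C_1 - C_2) A_{12} C_1 + C_2 A_{12} (C_1 - C_2)$ yields, for $C_1, C_2 \in B_M$,
\begin{equation*}
  \norm{T_\eps(C_1) - T_\eps(C_2)} \le \eps \norm{A_{22}^{-1}} \bigl( 2 M \norm{A_{12}} + \norm{A_{11}} \bigr) \norm{C_1 - C_2},
\end{equation*}
so shrinking $\delta_1$ further if needed makes $T_\eps$ a strict contraction on $B_M$. The Banach fixed-point theorem then yields a unique $C^* \in B_M$ solving \cref{equ:center_linear_1}, and plugging $C^*$ back in gives
\begin{equation*}
  \norm{C^* + A_{22}^{-1} A_{21}} = \eps \norm{A_{22}^{-1} (C^* A_{12} C^* + C^* A_{11})} = \mathcal{O}(\eps).
\end{equation*}

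With $C^*$ in hand, \cref{equ:center_linear_2} becomes a linear equation whose coefficient matrix $A_{22} - \eps C^* A_{12}$ is a small perturbation of the invertible $A_{22}$; for $\eps$ below a second threshold $\delta_2$ (depending only on $M$ and $\norm{A_{22}^{-1}}, \norm{A_{12}}$), a Neumann series gives invertibility and the bound $\norm{(A_{22} - \eps C^* A_{12})^{-1} - A_{22}^{-1}} = \mathcal{O}(\eps)$. Unique solvability for $d^*$ follows, and expanding the right-hand side then yields $d^* + A_{22}^{-1} b_2 = \mathcal{O}(\eps)$. Taking $\delta := \min \{ \delta_1, \delta_2, \eps_0 \}$ completes the proof.

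The only step that demands care is the choice of the ball radius: the assumption $M > \norm{A_{22}^{-1} A_{21}}$ is exactly what creates slack for the self-mapping property, since at $\eps = 0$ the fixed point is $-A_{22}^{-1} A_{21}$ and any smaller ball would fail to contain the unperturbed solution. The contraction estimate is then straightforward because the nonlinearity $C \mapsto C A_{12} C$ is merely quadratic and uniformly Lipschitz on the bounded set $B_M$, so the whole argument is essentially a quantitative implicit function theorem. I do not anticipate any real obstacle beyond bookkeeping the two smallness thresholds $\delta_1$ and $\delta_2$.
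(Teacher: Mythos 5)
Your proposal is correct and follows essentially the same route as the paper: recast the Riccati equation as a fixed-point problem on the closed ball of radius $M$, verify self-mapping and contraction for small $\eps$, invoke Banach's theorem, and then solve for $d^*$ by noting that $A_{22}-\eps C^*A_{12}$ is a small perturbation of the invertible $A_{22}$. The only cosmetic difference is that you name the Neumann series explicitly where the paper just bounds $\norm{\eps A_{22}^{-1}C^*A_{12}}\le\frac{1}{2}$; the substance is identical.
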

\begin{proof}
	First, we consider the equation \cref{equ:center_linear_1}. It is 
	easy to see that the solution of \cref{equ:center_linear_1} is the 
	fixed point of the mapping
	\begin{displaymath}
	\mathcal{T}:C\mapsto -A_{22}^{-1}A_{21}+\eps 
	A_{22}^{-1}CA_{11}+\eps A_{22}^{-1}CA_{12}C.
	\end{displaymath}
	We define a set $\mathcal{E}=\{C\in\mathbb{R}^{n_y\times 
	n_x}:\norm{C}\le M\}$, where $M>\norm{A_{22}^{-1}A_{21}}$ is 
	arbitrary. Let 
	$M_1=\norm{A_{22}^{-1}}(\norm{A_{11}}+\norm{A_{12}})$.
	When $C\in\mathcal{E}$, we have that 
	\begin{displaymath}
	\begin{aligned}
	\norm{\mathcal{T}C}\le\norm{A_{22}^{-1}A_{21}}+\eps M_1 \norm{C} 
	+\eps M_1\norm{C}^2
	\le \norm{A_{22}^{-1}A_{21}}+\eps M_1 (M+M^2).
	\end{aligned}
	\end{displaymath}
	We take 
	$\delta_1 = \min \left\{ 
	\eps_0,\frac{M-\norm{A_{22}^{-1}A_{21}}}{M_1(M+M^2)} \right\}>0$.
	For each $\eps\in(0,\delta_1]$, we have that 
	$\mathcal{T}:\mathcal{E}\rightarrow\mathcal{E}$. For each 
	$C,\tilde{C}\in\mathcal{E}$, 
	\begin{displaymath}
	\begin{aligned}
	\norm{\mathcal{T}C-\mathcal{T}\tilde{C}}&=\norm{\eps 
	A_{22}^{-1}(C-\tilde{C})A_{11}+\eps A_{22}^{-1}(C-\tilde{C})A_{12} 
	C +\eps A_{22}^{-1} \tilde{C} A_{12} (C-\tilde{C})}\\
	&\le \eps M_1(1+2M) \norm{C-\tilde{C}}.
	\end{aligned}
	\end{displaymath}
	We take $\delta_2 =\min\{\delta_1,\frac{1}{2M_1(1+2M)}\}>0$. Then 
	for each $\eps\in(0,\delta_2]$, $\mathcal{T}$ is a contraction 
	mapping on $\mathcal{E}$. Therefore, there exists a unique solution 
	$C^*$ to \cref{equ:center_linear_1} in $\mathcal{E}$. We take 
	$\delta=\min\{\delta_2,\frac{1}{2M_1M}\}$. For each 
	$\eps\in(0,\delta]$, $\norm{\eps A_{22}^{-1}C^*A_{12}}\le \eps 
	M_1M\le\frac{1}{2}<1$. Thus, $A_{22}-\eps C^*A_{12}$ is invertible. 
	With $C^*$ fixed, there exists a unique solution $d^*$ to 
	\cref{equ:center_linear_2} in $\ry$, and $d^*$ is uniformly 
	bounded in $\eps\in(0,\delta]$. Furthermore, we notice that 
	\begin{displaymath}
	\begin{aligned}
	&C^*+A_{22}^{-1}A_{21}=\eps(A_{22}^{-1}C^*A_{11}+A_{22}^{-1}C^*A_{12}C^*)=\mathcal{O}(\eps),\\
	&d^*+A_{22}^{-1}b_2=\eps(A_{22}^{-1}C^*A_{12}d^*+A_{22}^{-1}C^*b_1)=\mathcal{O}(\eps).
	\end{aligned}
	\end{displaymath}
	Then the proof is completed.
\end{proof}

\section{Proof of \cref{thm:iteration}}
\label{app:pf_iter}
\begin{proof}[Proof of \cref{thm:iteration}]
  By \cref{remark:monotone}, one can obtain that
  \begin{displaymath}
    \begin{aligned}
      |\Gamma_1(x,\eps)-\gamma(x)-\eps\gamma_1(x)| & \le 
      \frac{1}{\beta} |g(x,\Gamma_1(x,\eps)) - 
      g(x,\gamma(x)+\eps\gamma_1(x))| \\
      & = \frac{1}{\beta}| \eps 
      \nabla \gamma(x) f(x,\gamma(x)) - 
      g(x,\gamma(x)+\eps\gamma_1(x)) |.
    \end{aligned}
  \end{displaymath}
  By \cref{equ:expression_pre_1} and Taylor's expansion of 
  $g(x,y)$ at $(x,\gamma(x))$, 
  \begin{displaymath}
    \begin{aligned}
      &\eps\nabla\gamma(x)f(x,\gamma(x))-g(x,\gamma(x)+\eps\gamma_1(x))\\
      =\, &\eps G_y(x)\gamma_1(x)-\big(g(x,\gamma(x))+\eps 
      G_y(x)\gamma_1(x)+\mathcal{O}(\eps^2)\big)\\
      =\, &\mathcal{O}(\eps^2),
    \end{aligned}
  \end{displaymath}
  where $\mathcal{O}(\eps^2)$ can be controlled uniformly thanks to 
  \cref{assump:bound}. In other words, there exists a 
  constant $C_1>0$ such that 
  $|\eps\nabla\gamma(x)f(x,\gamma(x))-g(x,\gamma(x)+\eps\gamma_1(x))
  |\le C_1\eps^2$. Therefore, 
  \begin{displaymath}
    |\Gamma_1(x,\eps)-\gamma(x)-\eps\gamma_1(x)|\le 
    \frac{C_1}{\beta}\eps^2,
  \end{displaymath}
  which completes the proof of \cref{thm:iteration_1}.
  
  By the expression of $\gamma_1(x)$ in 
  \cref{equ:expression_pre_1}, one gets that 
  \begin{equation}
    \label{equ:thm_iteration_2_1}
    g(x,\Gamma_1(x,\eps))=\eps G_y(x)\gamma_1(x).
  \end{equation}
  For the sake of clarity, we consider the $i$-th component of 
  \cref{equ:thm_iteration_2_1}, that is,
  \begin{equation}
    \label{equ:thm_iteration_2_2}
    g^i(x,\Gamma_1(x,\eps))=\eps\sum_{k=1}^{n_y}\frac{\partial 
    g^i}{\partial y^k}(x,\gamma(x))\gamma_1^k(x).
  \end{equation}
  Taking the derivative of \cref{equ:thm_iteration_2_2} with 
  respect to $x^j$, we obtain that 
  \begin{equation}
    \label{equ:thm_iteration_2_3}
    \begin{aligned}
      &\frac{\partial g^i}{\partial 
      x^j}(x,\Gamma_1(x,\eps))+\sum_{k=1}^{n_y}\frac{\partial 
      g^i}{\partial y^k}(x,\Gamma_1(x,\eps))\frac{\partial 
      \Gamma_1^k}{\partial x^j}(x,\eps)\\
      =\ &\eps\sum_{k=1}^{n_y}\bigg( \frac{\partial^2 g^i}{\partial 
      y^k \partial x^j}(x,\gamma(x)) + 
      \sum_{\ell=1}^{n_y}\frac{\partial^2 g^i}{\partial y^k \partial 
      y^\ell}(x,\gamma(x))\frac{\partial \gamma^\ell}{\partial 
      x^j}(x) \bigg)\gamma_1^k(x)\\
      &\quad\quad +\eps\sum_{k=1}^{n_y}\frac{\partial g^i}{\partial 
      y^k}(x,\gamma(x))\frac{\partial \gamma_1^k}{\partial x^j}(x).
    \end{aligned}
  \end{equation}
  Actually \cref{equ:thm_iteration_2_3} yields that 
  $\nabla_x\Gamma_1(x,\eps)$ is uniformly bounded in 
  $\eps\in(0,\eps_0]$, since
  \begin{displaymath}
    \nabla_x\Gamma_1(x,\eps)=G_y(x)^{-1}A,
  \end{displaymath}
  where $A=A(x,\eps)=[A^{i,j}]_{i=1,j=1}^{n_y,n_x}$, $A^{i,j}$ is 
  defined as
  \begin{displaymath}
    \begin{aligned}
      A^{i,j}&=\eps\sum_{k=1}^{n_y}\bigg( \frac{\partial^2 
      g^i}{\partial y^k \partial x^j}(x,\gamma(x)) + 
      \sum_{\ell=1}^{n_y}\frac{\partial^2 g^i}{\partial y^k \partial 
      y^\ell}(x,\gamma(x))\frac{\partial \gamma^\ell}{\partial 
      x^j}(x) \bigg)\gamma_1^k(x)\\
      &\quad\quad\quad+\eps\sum_{k=1}^{n_y}\frac{\partial 
      g^i}{\partial y^k}(x,\gamma(x))\frac{\partial 
      \gamma_1^k}{\partial x^j}(x)-\frac{\partial g^i}{\partial 
      x^j}(x,\Gamma_1(x,\eps)),
    \end{aligned}
  \end{displaymath}
  and it is obvious that $A(x,\eps)$ is uniformly bounded in 
  $\eps\in(0,\eps_0]$ due to \cref{assump:bound}. By 
  \cref{thm:iteration_1} and Taylor's expansion of 
  $\frac{\partial g^i}{\partial x^j}$ and 
  $\frac{\partial g^i}{\partial y^k}$ at $(x,\gamma(x))$, one obtains 
  that 
  \begin{equation}
    \label{equ:thm_iteration_2_5}
    \frac{\partial g^i}{\partial 
    x^j}(x,\Gamma_1(x,\eps))=\frac{\partial g^i}{\partial 
    x^j}(x,\gamma(x))+\eps\sum_{k=1}^{n_y}\frac{\partial^2 
    g^i}{\partial x^j \partial 
    y^k}(x,\gamma(x))\gamma_1^k(x)+\mathcal{O}(\eps^2),
  \end{equation}
  and 
  \begin{equation}
    \label{equ:thm_iteration_2_6}
    \frac{\partial g^i}{\partial 
    y^k}(x,\Gamma_1(x,\eps))=\frac{\partial g^i}{\partial 
    y^k}(x,\gamma(x))+\eps\sum_{\ell=1}^{n_y}\frac{\partial^2 
    g^i}{\partial y^k\partial 
    y^\ell}(x,\gamma(x))\gamma_1^\ell(x)+\mathcal{O}(\eps^2),
  \end{equation}
  where both $\mathcal{O}(\eps^2)$ can be controlled uniformly due 
  to \cref{assump:bound}. In addition, the 
  $(i,j)$-component of \cref{equ:nabla_gamma} can be written as
  \begin{equation}
    \label{equ:thm_iteration_2_7}
    \frac{\partial g^i}{\partial 
    x^j}(x,\gamma(x))=-\sum_{k=1}^{n_y}\frac{\partial g^i}{\partial 
    y^k}(x,\gamma(x))\frac{\partial \gamma^k}{\partial x^j}(x).
  \end{equation}	
  By \cref{equ:thm_iteration_2_3}, \cref{equ:thm_iteration_2_5}, 
  \cref{equ:thm_iteration_2_6}, \cref{equ:thm_iteration_2_7} and 
  the uniform boundedness of $\nabla_x \Gamma_1(x,\eps)$, one can 
  obtain that
  \begin{equation}
    \label{equ:thm_iteration_2_8}
    \sum_{k=1}^{n_y}\bigg(\frac{\partial g^i}{\partial 
    y^k}(x,\gamma(x))+\eps B^{i,k}\bigg)\cdot\bigg(\frac{\partial
     \Gamma_1^k}{\partial x^j}(x,\eps)-\frac{\partial 
    \gamma^k}{\partial x^j}(x)-\eps\frac{\partial 
    \gamma_1^k}{\partial x^j}(x)\bigg)=\mathcal{O}(\eps^2),
  \end{equation}
  where $\mathcal{O}(\eps^2)$ can be controlled uniformly, 
  $B=B(x)=[B^{i,k}]_{i,k=1}^{n_y}$ defined as
  \begin{displaymath}
    B^{i,k} = \sum_{\ell=1}^{n_y}\frac{\partial^2 
      g^i}{\partial y^k \partial 
      y^\ell}(x,\gamma(x))\gamma_1^\ell(x).
  \end{displaymath}
  \cref{equ:thm_iteration_2_8} can be rewritten as 
  \begin{equation}
    \label{equ:thm_iteration_2_9}
    (G_y(x)+\eps B) (\nabla_x\Gamma_1(x,\eps) - \nabla\gamma(x) - 
    \eps\nabla\gamma_1(x)) = \mathcal{O}(\eps^2).
  \end{equation}
  Notice that $B(x)$ is uniformly bounded thanks to 
  \cref{assump:bound}. 
  Therefore, as long as $\eps$ is sufficiently 
  small, $G_y(x)+\eps B$ is invertible and $(G_y(x)+\eps B)^{-1}$ 
  is uniformly bounded. Therefore, the proof of 
  \cref{thm:iteration_2} is completed by 
  \cref{equ:thm_iteration_2_9}.
  
  By \cref{remark:monotone}, one obtains that 
  \begin{displaymath}
    \begin{aligned}
      & |\Gamma_2(x,\eps) - \gamma(x) - \eps\gamma_1(x) - 
      \eps^2\gamma_2(x)| \\
      \le & \frac{1}{\beta} |g(x,\Gamma_2(x,\eps)) - 
      g(x,\gamma(x)+\eps\gamma_1(x)+\eps^2\gamma_2(x))|\\
      \le & \frac{1}{\beta} |\eps 
      \nabla_x \Gamma_1(x,\eps) f(x,\Gamma_1(x,\eps)) - 
      g(x,\gamma(x)+\eps\gamma_1(x)+\eps^2\gamma_2(x))|.
    \end{aligned}
  \end{displaymath}
  By \cref{thm:iteration_1}, \cref{thm:iteration_2} and Taylor's 
  expansion of $f(x,y)$ at $(x,\gamma(x))$, one gets that 
  \begin{equation}
    \label{equ:thm_iteration_3_2}
    \begin{aligned}
      &\eps\nabla_x\Gamma_1(x,\eps)f(x,\Gamma_1(x,\eps))\\
      =\ 
      &\eps\big(\nabla\gamma(x)+\eps\nabla\gamma_1(x)\big)\big(F(x)+\eps
       F_y(x)\gamma_1(x)\big)+\mathcal{O}(\eps^3)\\
      =\ 
      &\eps\nabla\gamma(x)F(x)+\eps^2\big(\nabla\gamma(x)F_y(x)\gamma_1(x)+\nabla\gamma_1(x)F(x)\big)+\mathcal{O}(\eps^3).
    \end{aligned}
  \end{equation}
  By Taylor's expansion of $g(x,y)$ at $(x,\gamma(x))$, one gets
  that 
  \begin{equation}
    \label{equ:thm_iteration_3_3}
    \begin{aligned}
      &g(x,\gamma(x)+\eps\gamma_1(x)+\eps^2\gamma_2(x))\\
      =\ &\eps 
      G_y(x)\gamma_1(x)+\eps^2\Bigg(G_y(x)\gamma_2(x)+\frac{1}{2}\Big[
       \sum_{j,k=1}^{n_y} G_{yy}^{i,j,k} \gamma_1^j(x) \gamma_1^k(x) 
      \Big]_{i=1}^{n_y}\Bigg)+\mathcal{O}(\eps^3).
    \end{aligned}
  \end{equation}
  By comparing \cref{equ:thm_iteration_3_2} and 
  \cref{equ:thm_iteration_3_3}, one obtains that 
  \begin{displaymath}
    |\eps \nabla_x \Gamma_1(x,\eps) f(x,\Gamma_1(x,\eps)) - 
    g(x,\gamma(x) + \eps\gamma_1(x) + \eps^2\gamma_2(x))| = 
    \mathcal{O}(\eps^3),
  \end{displaymath}
  where $\mathcal{O}(\eps^3)$ can be controlled uniformly thanks to 
  \cref{assump:bound}. In other words, there exists a 
  constant $C_2>0$ such that 
  $|\eps 
  \nabla_x\Gamma_1(x,\eps)f(x,\Gamma_1(x,\eps))-g(x,\gamma(x)+\eps\gamma_1(x)+\eps^2\gamma_2(x))|\le
   C_2\eps^3$. 
  Therefore, 
  \begin{displaymath}
    |\Gamma_2(x,\eps)-\gamma(x)-\eps\gamma_1(x)-\eps^2\gamma_2(x)|\le\frac{C_2}{\beta}\eps^3.
  \end{displaymath}
  Thus, the proof for \cref{thm:iteration_3} is completed.
\end{proof}

\section{Proofs of 
\cref{thm:iteration_attractor,thm:iteration_invariant,thm:nd}}
\label{app:thm_iteration}

\subsection{Several lemmas}
\label{app:thm:lemma}

In the proofs of these three theorems, we have to calculate 
high-order gradients of vector-valued functions. 
Here we present several facts about high-order derivatives.

\begin{lemma}
	\label{lemma:highOrderD}
  Suppose that the functions $A$, $\xi$, $\eta$ in this lemma are 
  sufficiently smooth. We have the following conclusions.
	\begin{enumerate}
    \item \label{lemma:highOrderD_3}
    Assume that $k \ge 0$ and $\xi, \eta \in W^{k,\infty} (\mr^n, 
    \mr)$, then 
    \begin{displaymath}
      |\xi \eta|_{k,\infty} \lesssim \sum_{j = 0}^{k} 
      |\xi|_{j,\infty} |\eta|_{k-j,\infty}.
    \end{displaymath}
    where the bound is dependent on $k$, and independent of $\xi$ and 
    $\eta$.
		
		\item \label{lemma:highOrderD_2} 
    Assume that $k \ge 0$. Let $\xi: \mr^n \rightarrow \mr$ and 
    $\eta: \mr^m \rightarrow \mr^n$ be sufficiently smooth functions. 
    If $\nabla \xi \in W^{k, \infty}$ and $\nabla \eta \in W^{k, 
    \infty}$, then 
    \begin{displaymath}
      |\xi \circ \eta|_{k+1, \infty} \lesssim \sum_{j=1}^{k+1} 
      |\xi|_{j,\infty} \norm{\nabla \eta}_{k+1-j,\infty}^j,
    \end{displaymath}
    where the bound is dependent on $k$, and independent of $\xi$ and 
    $\eta$. In particular, if $\xi \in W^{k+1,\infty}$ and $\nabla 
    \eta \in W^{k,\infty}$, then $\xi \circ \eta \in W^{k+1,\infty}$.
    
    \item \label{lemma:highOrderD_1} 
    Assume that $k \ge 0$. Let $A: \mr^m \rightarrow \mr^{n \times 
    n}$ is a matrix-valued function. If $\nabla A \in W^{k, \infty}$ 
    and $A(\cdot)^{-1} \in \ml^\infty$, then 
    \begin{displaymath}
      |A(\cdot)^{-1}|_{k+1,\infty} \lesssim \sum_{j = 1}^{k+1} 
      \norm{A(\cdot)^{-1}}_{0,\infty}^{j+1} \norm{\nabla 
      A}_{k+1-j,\infty}^j,
    \end{displaymath}
    where the bound is dependent on $k$, and independent of $A$.
  \end{enumerate}
\end{lemma}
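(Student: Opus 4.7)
The plan is to prove the three items in order, using each as a tool for the next. Item 1 is a direct consequence of the multi-index Leibniz rule, while items 2 and 3 are proved by induction on $k$, relying on item 1 to distribute derivatives across products.

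For item 1, the multi-index Leibniz rule yields
\begin{displaymath}
\nabla^{k}(\xi\eta)=\sum_{j=0}^{k}\binom{k}{j}\,\nabla^{j}\xi\odot\nabla^{k-j}\eta,
\end{displaymath}
where $\odot$ denotes the appropriate symmetric contraction of derivative tensors. Passing to the $\ml^\infty$ norm and recalling that $|\cdot|_{\ell,\infty}=\norm{\nabla^{\ell}(\cdot)}_{\ml^\infty}$ gives the claim immediately, with the implicit constant a sum of binomial coefficients depending only on $k$.

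For item 2 I would induct on $k$. The base case $k=0$ is the ordinary chain rule $\nabla(\xi\circ\eta)=\bigl((\nabla\xi)\circ\eta\bigr)\nabla\eta$, which after taking $\ml^\infty$ gives $|\xi\circ\eta|_{1,\infty}\lesssim|\xi|_{1,\infty}\norm{\nabla\eta}_{0,\infty}$, matching the right-hand side with $j=1$. For the inductive step from $k$ to $k+1$, apply $\nabla^{k}$ to $\bigl((\nabla\xi)\circ\eta\bigr)\nabla\eta$ and distribute across this product via item 1. Each scalar component of $(\nabla\xi)\circ\eta$ is itself a composition to which the inductive hypothesis applies, with outer function a component of $\nabla\xi$; this produces terms of the shape $|\xi|_{m+1,\infty}\norm{\nabla\eta}_{*,\infty}^{m}$. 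Multiplying by a factor $\nabla^{*}(\nabla\eta)$ and re-indexing $j=m+1$ yields the claimed bound, with $\norm{\nabla\eta}$ raised to the power $j$ and remaining derivative orders summing to $k+2-j$. The main obstacle here is combinatorial: verifying that the exponent of $\norm{\nabla\eta}_{*,\infty}$ increases by exactly one per nested application of the chain rule, and that the resulting double sum telescopes onto the target index $j=1,\ldots,k+1$. This is essentially a semi-norm version of the Faa di Bruno formula.

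For item 3, differentiate $A(x)A(x)^{-1}=I$ once to get $\nabla(A^{-1})=-A^{-1}(\nabla A)A^{-1}$, handling the base case $k=0$. For the inductive step, apply $\nabla^{k}$ to this identity and distribute across the triple product via item 1. Derivatives of the middle factor contribute factors of $\norm{\nabla A}_{*,\infty}$, while each derivative of an outer $A^{-1}$ is controlled by the inductive hypothesis. The key algebraic observation is that each differentiation of a factor of $A^{-1}$ exchanges it for two copies of $A^{-1}$ separated by one derivative of $A$; consequently any term surviving after $k+1$ total differentiations has exactly $j+1$ copies of $A^{-1}$ for some $j$, exactly $j$ factors of (derivatives of) $\nabla A$, and remaining derivative orders summing to $k+1-j$, matching the stated bound. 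Once item 1 is available this step is routine.
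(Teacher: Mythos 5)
Your proposal is correct and follows essentially the same route as the paper: item~\ref{lemma:highOrderD_3} by the Leibniz rule, and items~\ref{lemma:highOrderD_2} and~\ref{lemma:highOrderD_1} by differentiating the chain-rule/inverse-derivative identities once and inducting, distributing derivatives across the resulting products with item~\ref{lemma:highOrderD_3}. The only substantive difference is in item~\ref{lemma:highOrderD_1}, where the paper simply remarks that the argument is ``similar to'' item~\ref{lemma:highOrderD_2}, while you unroll the induction into a direct combinatorial count of how many copies of $A^{-1}$ and how many factors of $\nabla A$ can survive $k+1$ differentiations; this is a welcome clarification and is consistent with the claimed bound. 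One small caution: in the inductive step for item~\ref{lemma:highOrderD_2} you write ``apply $\nabla^k$ to $((\nabla\xi)\circ\eta)\,\nabla\eta$,'' but since the claim at level $k+1$ concerns $|\xi\circ\eta|_{k+2,\infty}$ you need to apply $\nabla^{k+1}$ (equivalently, treat the step as going from $k-1$ to $k$); this is an indexing slip, not a flaw in the argument.
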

\begin{proof}
  \cref{lemma:highOrderD_3} is a direct corollary of Leibniz rule.
  
  As for \cref{lemma:highOrderD_2}, first we notice that 
  \begin{displaymath}
    \nabla_x \left( (\xi \circ \eta)(x) \right) = \left( (\nabla \xi) 
    \circ \eta \right) (x) \cdot \nabla\eta(x),
  \end{displaymath}
  which yields directly that the conclusion holds for $k = 0$. Now we 
  assume that the conclusion holds for all non-negative integers that 
  are less than $k$, and then one can obtain by 
  \cref{lemma:highOrderD_3} and the induction hypothesis that 
  \begin{displaymath}
    \begin{aligned}
      |\xi \circ \eta|_{k+2,\infty} & = \left| \big( (\nabla\xi) 
      \circ \eta \big) \cdot \nabla \eta \right|_{k+1,\infty} \\
      & \lesssim \sum_{i=0}^{k+1} \left|(\nabla\xi) 
      \circ \eta \right|_{i,\infty} |\nabla \eta|_{k+1-i,\infty} \\
      & \lesssim |\xi|_{1,\infty} |\nabla \eta|_{k+1,\infty} + 
      \sum_{i=1}^{k+1} \sum_{j=1}^{i} |\xi|_{j+1,\infty} 
      \norm{\nabla\eta}_{i-j,\infty}^j |\nabla \eta|_{k + 1 - i, 
      \infty} \\
      & \le |\xi|_{1,\infty} |\nabla \eta|_{k+1,\infty} + 
      \sum_{j=1}^{k+1} \sum_{i=j}^{k+1} |\xi|_{j+1,\infty} 
      \norm{\nabla \eta}_{k+1-j,\infty}^{j+1} \\
      & \lesssim \sum_{j = 1}^{k + 2} |\xi|_{j,\infty} \norm{\nabla 
      \eta}_{k + 2 - j, \infty}^j,
    \end{aligned}
  \end{displaymath}
  which completes the proof.
  
  As for \cref{lemma:highOrderD_1}, one can notice first that 
  $A(x)^{-1}$ is continuous, which yields that 
  \begin{displaymath}
    \pd{A(x)^{-1}}{x^i} = -A(x)^{-1} \pd{A(x)}{x^i} A(x)^{-1}, \ 
    \forall i = 1, 2, \ldots, m.
  \end{displaymath}
  Then the conclusion can be proved similarly to 
  \cref{lemma:highOrderD_2}.
\end{proof}


\begin{lemma}
  \label{lemma:taylor}
  Suppose that $\Phi: \mr^n \times [-\delta, \delta] \rightarrow \mr$ 
  is sufficiently smooth. If there exists $k, \ell \in \mathbb{N}$ 
  such that the function $\Phi(x,s)$ satisfies
  \begin{displaymath}
    \dfrac{\partial^i \Phi}{\partial s^i}(x, 0) = 0, \ \forall i = 0, 
    1, \ldots, \ell, 
  \end{displaymath}
  and
  \begin{displaymath}
    \sup_{s \in [-\delta, \delta]} \norm{ \dfrac{\partial^{\ell + 1} 
    \Phi}{\partial s^{\ell + 1}}(\cdot, s) }_{k, \infty} < + \infty,
  \end{displaymath}
  then 
  \begin{displaymath}
    \norm{\Phi(\cdot, h)}_{k, \infty} = \mathcal{O}(|h|^{\ell + 1}), 
    \text{ as } h \rightarrow 0.
  \end{displaymath}
\end{lemma}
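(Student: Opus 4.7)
The plan is to apply Taylor's theorem in the scalar variable $s$ with integral remainder and then differentiate under the integral sign in $x$. Because all the $s$-derivatives of $\Phi$ up to order $\ell$ vanish at $s = 0$, the remainder captures the full function and produces the desired power of $|h|$.

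First I would write, for $|h| \le \delta$,
\begin{displaymath}
\Phi(x,h) = \sum_{i=0}^{\ell} \frac{h^i}{i!} \frac{\partial^i \Phi}{\partial s^i}(x,0) + \frac{1}{\ell!} \int_0^h (h-s)^\ell \frac{\partial^{\ell+1} \Phi}{\partial s^{\ell+1}}(x,s) \, ds,
\end{displaymath}
and observe that, by hypothesis, the entire sum vanishes. Thus
\begin{displaymath}
\Phi(x,h) = \frac{1}{\ell!} \int_0^h (h-s)^\ell \frac{\partial^{\ell+1} \Phi}{\partial s^{\ell+1}}(x,s) \, ds.
\end{displaymath}

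Next, for any multi-index $\alpha$ with $|\alpha| \le k$, I would differentiate under the integral sign in $x$. The differentiation is legitimate because $\Phi$ is sufficiently smooth and, by the hypothesis, $\nabla_x^\alpha \frac{\partial^{\ell+1} \Phi}{\partial s^{\ell+1}}(\cdot,s)$ is uniformly bounded on $\mathbb{R}^n \times [-\delta,\delta]$ for all $|\alpha| \le k$. This yields
\begin{displaymath}
\nabla_x^\alpha \Phi(x,h) = \frac{1}{\ell!} \int_0^h (h-s)^\ell \nabla_x^\alpha \frac{\partial^{\ell+1} \Phi}{\partial s^{\ell+1}}(x,s) \, ds.
\end{displaymath}
Taking absolute values, then supremum over $x$ and $|\alpha| \le k$, and denoting $M := \sup_{s \in [-\delta,\delta]} \norm{\frac{\partial^{\ell+1} \Phi}{\partial s^{\ell+1}}(\cdot,s)}_{k,\infty}$, one obtains
\begin{displaymath}
\norm{\Phi(\cdot,h)}_{k,\infty} \le \frac{M}{\ell!} \left| \int_0^h (h-s)^\ell \, ds \right| = \frac{M}{(\ell+1)!} |h|^{\ell+1},
\end{displaymath}
which is precisely the claim.

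There is no real obstacle; the only point requiring mild care is the justification for passing $\nabla_x^\alpha$ inside the integral, but this follows from the standing smoothness of $\Phi$ combined with the given uniform bound on $\|\frac{\partial^{\ell+1}\Phi}{\partial s^{\ell+1}}(\cdot,s)\|_{k,\infty}$, which, by elementary interpolation or by observing that $\frac{\partial^{\ell+1} \Phi}{\partial s^{\ell+1}}(x,0)$ together with a Taylor expansion in $s$ controls lower-order $s$-derivatives, also dominates $\|\frac{\partial^j \Phi}{\partial s^j}(\cdot,s)\|_{k,\infty}$ for $j \le \ell$. Hence the integrand is dominated by an integrable majorant, and the standard dominated convergence argument allows the exchange of differentiation and integration.
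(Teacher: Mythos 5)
Your proof is correct and follows essentially the same route as the paper: Taylor expand in $s$ using the vanishing of the first $\ell$ $s$-derivatives at $s=0$, and commute $\nabla_x^\alpha$ with the expansion to reduce the estimate to the uniform $W^{k,\infty}$ bound on $\partial^{\ell+1}_s\Phi$. The paper uses the Lagrange form of the remainder applied directly to $s \mapsto \nabla_x^j \Phi(x,s)$, while you use the integral form and then differentiate under the integral sign; the two are interchangeable here, and your closing remark about interpolation and lower-order $s$-derivatives is superfluous since only the $(\ell+1)$-th $s$-derivative appears in the remainder and is bounded uniformly in $s$ by hypothesis.
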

\begin{proof}
  By Taylor's expansion, for each $j = 0, 1, \ldots, k$, there exists 
  $\theta \in [0,1]$ such that 
  \begin{displaymath}
    |\nabla_x^j \Phi(x,h)| = \frac{1}{(\ell+1)!} \left| \nabla_x^j 
    \dfrac{\partial^{\ell+1} \Phi}{\partial s^{\ell+1}}(x,\theta h) 
    h^{\ell+1} \right| \lesssim \sup_{s \in [-\delta, \delta]} \norm{ 
    \dfrac{\partial^{\ell + 1} \Phi}{\partial s^{\ell + 1}}(\cdot, s) 
    }_{k, \infty} |h|^{\ell+1},
  \end{displaymath}
  which completes the proof.
\end{proof}

\begin{corollary}
  \label{lemma:diff}
  Suppose that $\xi: \mr^n \rightarrow \mr$ and $\eta: \mr^n 
  \rightarrow \mr^n$ are sufficiently smooth, and $\eta 
  \in W^{k, \infty}$. We have the following conclusions.
  \begin{enumerate}
    
    \item \label{lemma:highOrderD_6}
    If $\nabla^2 \xi \in W^{k, \infty}$, 
    then $$ \norm{\xi(\cdot + \eta(\cdot) h) - \xi(\cdot) - h \nabla 
    \xi(\cdot) \eta(\cdot)}_{k, \infty} = \mathcal{O}(|h|^2), \text{ 
    as } h \rightarrow 0.$$
     
     \item \label{lemma:highOrderD_7}
     If $\nabla^3 \xi \in W^{k, \infty}$, then $$ \norm{\xi(\cdot + 
     \eta(\cdot) h) - 
     \xi(\cdot - \eta(\cdot) h) - 2 h \nabla \xi(\cdot) 
     \eta(\cdot)}_{k, \infty} = \mathcal{O}(|h|^3), \text{ as }
     h \rightarrow 0.$$
  \end{enumerate}
\end{corollary}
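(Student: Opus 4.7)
The plan is to apply \cref{lemma:taylor} to each part by constructing an appropriate auxiliary function $\Phi(x,s)$ of a real parameter $s$, verifying that sufficiently many of its $s$-derivatives vanish at $s = 0$, and then controlling the first non-vanishing $s$-derivative uniformly in the $W^{k,\infty}$ norm via the composition and product estimates from \cref{lemma:highOrderD}.

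For \cref{lemma:highOrderD_6}, I would set $\Phi(x,s) := \xi(x + \eta(x) s) - \xi(x) - s\, \nabla \xi(x) \eta(x)$. A direct chain-rule computation gives $\Phi(x,0) = 0$ and $\partial_s \Phi(x,0) = \nabla\xi(x)\eta(x) - \nabla\xi(x)\eta(x) = 0$, and
\begin{displaymath}
  \partial_s^2 \Phi(x,s) = \eta(x)^{\mathsf T} \bigl(\nabla^2 \xi\bigr)\!\bigl(x + \eta(x) s\bigr)\, \eta(x).
\end{displaymath}
So $\ell = 1$ in \cref{lemma:taylor}. It remains to bound $\partial_s^2 \Phi(\cdot, s)$ in $W^{k,\infty}$ uniformly for $s \in [-\delta, \delta]$. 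The composition $(\nabla^2 \xi) \circ (\mathrm{id} + s\eta)$ is handled componentwise by \cref{lemma:highOrderD}.\ref{lemma:highOrderD_2}: since $\nabla^2 \xi \in W^{k,\infty}$ and $\nabla(\mathrm{id} + s\eta) = I + s\nabla\eta \in W^{k-1,\infty}$ (which follows from $\eta \in W^{k,\infty}$), the composition lies in $W^{k,\infty}$ with a bound that depends polynomially on $\|\nabla\eta\|_{k-1,\infty}$ and hence uniformly in $s \in [-\delta,\delta]$. Multiplying by the two $\eta$ factors using \cref{lemma:highOrderD}.\ref{lemma:highOrderD_3} preserves $W^{k,\infty}$, and then \cref{lemma:taylor} yields $\mathcal{O}(|h|^2)$.

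For \cref{lemma:highOrderD_7}, I would set $\Phi(x,s) := \xi(x + \eta(x) s) - \xi(x - \eta(x) s) - 2s\, \nabla\xi(x)\eta(x)$. The key observation is that $\Phi(x, \cdot)$ is an odd function of $s$: indeed, $\xi(x + \eta(x) s) - \xi(x - \eta(x) s)$ and the linear correction are both odd in $s$. Therefore $\Phi(x,0) = 0$ and $\partial_s^2 \Phi(x,0) = 0$ automatically; and a direct computation shows $\partial_s \Phi(x,0) = \nabla\xi(x)\eta(x) + \nabla\xi(x)\eta(x) - 2\nabla\xi(x)\eta(x) = 0$. So we may take $\ell = 2$ in \cref{lemma:taylor}. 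The third $s$-derivative is
\begin{displaymath}
  \partial_s^3 \Phi(x,s) = \sum_{i,j,m} \eta^i(x)\eta^j(x)\eta^m(x)\Bigl[ \tfrac{\partial^3 \xi}{\partial x^i \partial x^j \partial x^m}(x + \eta(x) s) + \tfrac{\partial^3 \xi}{\partial x^i \partial x^j \partial x^m}(x - \eta(x) s) \Bigr],
\end{displaymath}
which is bounded in $W^{k,\infty}$ uniformly in $s$ by exactly the same composition-plus-product argument as above, now using $\nabla^3 \xi \in W^{k,\infty}$. Applying \cref{lemma:taylor} gives $\mathcal{O}(|h|^3)$.

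The main technical point, and the only place where some care is needed, is the uniform-in-$s$ bound on the composition $(\nabla^j \xi) \circ (\mathrm{id} + s\eta)$ in $W^{k,\infty}$; this is where the compatibility of the regularity assumptions $\eta \in W^{k,\infty}$ and $\nabla^j \xi \in W^{k,\infty}$ comes into play, and where \cref{lemma:highOrderD}.\ref{lemma:highOrderD_2} is essential. Everything else is bookkeeping via Leibniz (\cref{lemma:highOrderD}.\ref{lemma:highOrderD_3}) and the explicit computation of $s$-derivatives at $s = 0$.
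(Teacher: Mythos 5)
Your proposal is correct and matches the paper's own proof: both define $\Phi(x,s) = \xi(x + \eta(x)s) - \xi(x) - s\nabla\xi(x)\eta(x)$, compute $\partial_s^j\Phi$, check vanishing at $s=0$, and invoke \cref{lemma:taylor} with $\ell=1$ (resp.\ $\ell=2$, using $\Phi(x,s)-\Phi(x,-s)$, which is exactly your odd extension) together with \cref{lemma:highOrderD} for the uniform $W^{k,\infty}$ bound on the highest $s$-derivative.
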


\begin{proof}
  Let $\Phi(x,s) = \xi(x + \eta(x) s) - \xi(x) - s \nabla \xi(x) 
  \eta(x)$. By direct calculation, one can obtain that
  \begin{displaymath}
    \begin{aligned}
      & \pd{\Phi}{s}(x,s) = \nabla \xi \big( x + \eta(x) s \big) 
      \eta(x) - \nabla \xi(x) \eta(x), \\
      & \dfrac{\partial^2 \Phi}{\partial s^2}(x, s) = \sum_{i_1, i_2 
      = 1}^{n} \dfrac{\partial^2 \xi}{\partial x^{i_1} \partial 
      x^{i_2}} \big(x + \eta(x) s \big) \eta^{i_1}(x) \eta^{i_2}(x), 
      \\
      & \dfrac{\partial^3 \Phi}{\partial s^3}(x, s) = \sum_{i_1, i_2, 
      i_3 = 1}^{n} \dfrac{\partial^3 \xi}{\partial x^{i_1} \partial 
      x^{i_2} \partial x^{i_3}} \big(x + \eta(x) s \big) 
      \eta^{i_1}(x) \eta^{i_2}(x) \eta^{i_3}(x). \\
    \end{aligned}
  \end{displaymath}
  Notice that $\Phi(x,0) = \pd{\Phi}{s}(x,0) = 0$. By 
  \cref{lemma:highOrderD}, $\Phi(x,s)$ satisfies the conditions of 
  \cref{lemma:taylor} when $\ell = 1$, which yields 
  \cref{lemma:highOrderD_6}. Similarly, $\Phi(x,s) - \Phi(x,-s)$ 
  satisfies \cref{lemma:taylor} when $\ell = 2$, which completes the 
  proof.
\end{proof}

\begin{lemma}
	\label{lemma:basic}
  If $z_0, z_1 \in \mc^k(\rx, \ry)$ for some $k = 1, 2, \ldots, K$, 
  then there exist unique $y_0,y_1\in\mathcal{C}^k(\rx,\ry)$ 
  satisfying that $ g(x,y_i(x)) = z_i(x)$ for $i=0, 1$.
	In addition, if $z_i \in W^{k,\infty}$ for $i=0,1$, then 
	\begin{displaymath}
    \norm{y_1 - y_0}_{k, \infty} \lesssim \norm{z_1 - z_0}_{k, 
    \infty},
	\end{displaymath}
  where the bound is dependent on $\beta$, $k$ and 
  $\norm{z_i}_{k,\infty}$ where $i = 0,1 $.
\end{lemma}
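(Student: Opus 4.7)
The plan is to prove the statement in three stages: pointwise inversion, global $\mathcal{C}^k$-regularity via the implicit function theorem, and an induction on $k$ for the Sobolev-type estimate. Existence and uniqueness of $y_0, y_1$ follow immediately from \cref{remark:monotone}: for each fixed $x$, the operator $\tilde{g}_x$ is a bijection with $\tfrac{1}{\beta}$-Lipschitz inverse, so I would define $y_i(x) := \tilde{g}_x^{-1}(z_i(x))$, which directly yields the base case $\norm{y_1 - y_0}_{0, \infty} \le \tfrac{1}{\beta} \norm{z_1 - z_0}_{0, \infty}$. For $\mathcal{C}^k$-regularity I would apply the implicit function theorem to $F(x, y) := g(x, y) - z_i(x)$: strong dissipativity (\cref{assump:dissipative}) forces $\norm{\nabla_y g(x, y)^{-1}} \le 1/\beta$ uniformly, and combined with $\nabla g \in W^{K, \infty}$ from \cref{assump:bound} and $z_i \in \mathcal{C}^k$ with $k \le K$, the IFT delivers $y_i \in \mathcal{C}^k(\rx, \ry)$.

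For the norm estimate I would rewrite the difference in mean-value form
\begin{equation*}
  \Delta y(x) \;=\; A(x)^{-1} \Delta z(x), \qquad A(x) \;:=\; \int_0^1 \nabla_y g\bigl(x, y_0(x) + s \Delta y(x)\bigr) \, \mathrm{d}s,
\end{equation*}
where $\Delta y := y_1 - y_0$ and $\Delta z := z_1 - z_0$. The matrix $A(x)$ inherits the monotonicity $\langle A(x) v, v\rangle \le -\beta |v|^2$, hence $\norm{A(\cdot)^{-1}}_{0, \infty} \le 1/\beta$. Induction on $k$ would then go as follows: \cref{lemma:highOrderD_3} controls $|A^{-1} \Delta z|_{k, \infty}$ by a sum $\sum_{j = 0}^{k} |A^{-1}|_{j, \infty} |\Delta z|_{k - j, \infty}$; \cref{lemma:highOrderD_1} controls $|A^{-1}|_{j, \infty}$ in terms of $\norm{A^{-1}}_{0, \infty}$ and $\norm{\nabla A}_{j - 1, \infty}$; and \cref{lemma:highOrderD_2} controls $\norm{\nabla A}_{j - 1, \infty}$ by $\norm{\nabla g}_{j, \infty}$, $\norm{\nabla y_0}_{j - 1, \infty}$ and $\norm{\nabla \Delta y}_{j - 1, \infty}$. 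The last factor is exactly the inductive hypothesis at order $k - 1$, so the recursion closes and yields $\norm{\Delta y}_{k, \infty} \lesssim \norm{\Delta z}_{k, \infty}$ with the claimed dependences.

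To make this recursion well-defined, I would simultaneously run an a priori induction showing $\norm{y_i}_{k, \infty} \lesssim Q_k(\norm{z_i}_{k, \infty})$ for some polynomial $Q_k$ whose coefficients depend only on $\beta$, $k$ and $\norm{\nabla g}_{k - 1, \infty}$; this bound is extracted by applying the same three parts of \cref{lemma:highOrderD} to the identity $\nabla y_i = \nabla_y g(\cdot, y_i(\cdot))^{-1} \bigl( \nabla z_i - \nabla_x g(\cdot, y_i(\cdot)) \bigr)$. The main technical obstacle will be precisely this bookkeeping: after expanding via \cref{lemma:highOrderD_1,lemma:highOrderD_2,lemma:highOrderD_3}, every product, composition and matrix inverse produces Faà di Bruno-type sums, and one must verify that each occurrence of a derivative of $y_i$ or $\Delta y$ on the right-hand side is of strictly lower order than the current step so that no circular dependence arises. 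This combinatorial verification is essentially what \cref{lemma:highOrderD} is designed for, and once it is carried out the final estimate with explicit dependence on $\beta$, $k$ and $\norm{z_i}_{k, \infty}$ is immediate.
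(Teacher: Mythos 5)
Your mean-value representation $\Delta y = A^{-1}\Delta z$ with $A(x) = \int_0^1 \nabla_y g\bigl(x,\, y_0(x)+s\Delta y(x)\bigr)\,\mathrm{d}s$ is a genuinely different route from the paper's. The paper instead applies the implicit function theorem to $g(x,y) = s z_1(x) + (1-s)z_0(x)$ to get a one-parameter family $y(x,s)$, then estimates $\partial_s y(x,s) = \bigl(\partial_y g(x,y(x,s))\bigr)^{-1}\Delta z(x)$ and finishes by a mean-value argument in $s$. The structural advantage of the paper's parametrization is that the identity for $\nabla_x y(\cdot,s)$ (\cref{lemma:basic_3}) involves only $z_0, z_1$ and $y(\cdot,s)$ itself, never $\Delta y$, so the a priori bootstrap on $\sup_s\|\nabla_x y(\cdot,s)\|_{j,\infty}$ closes cleanly for $j = 0,\ldots,k-1$ and the final estimate is a one-shot application of the product and inverse lemmas to $\partial_s y$.

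Your version works, but one step of the explanation is off. You write that $\|\nabla\Delta y\|_{j-1,\infty}$ appearing inside $\|\nabla A\|_{j-\ell,\infty}$ is ``exactly the inductive hypothesis at order $k-1$.'' It is not: for the top term $j=k$, $\ell=1$, you need $\|\nabla\Delta y\|_{k-1,\infty}$, which controls $k$-th order derivatives of $\Delta y$ — precisely the quantity you are in the middle of estimating — so the inductive hypothesis $\|\Delta y\|_{k-1,\infty}\lesssim\|\Delta z\|_{k-1,\infty}$ does not reach it. What actually closes the loop (and is the correct companion to the a priori induction you set up) is the triangle-inequality bound $\|\nabla\Delta y\|_{k-1,\infty} \le \|\nabla y_0\|_{k-1,\infty} + \|\nabla y_1\|_{k-1,\infty} \lesssim Q_k\bigl(\|z_i\|_{k,\infty}\bigr)$. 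Once every derivative of $A$ is charged to $\|y_i\|_{k,\infty}$ rather than to $\|\Delta y\|_{k,\infty}$, the product estimate $|\Delta y|_{k,\infty}\lesssim\sum_j |A^{-1}|_{j,\infty}|\Delta z|_{k-j,\infty}$ gives the claimed linear bound in $\|\Delta z\|_{k,\infty}$ with constant depending on $\beta$, $k$ and $\|z_i\|_{k,\infty}$, matching the statement. With that correction your argument is complete; the rest — monotonicity of the averaged Jacobian, the base case via $1/\beta$-Lipschitz inverse, and the use of \cref{lemma:highOrderD} for the Faà di Bruno bookkeeping — is sound and parallel in spirit to the paper's.
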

\begin{proof}
	By \cref{remark:monotone} and implicit function 
	theorem for $g(x,y) -s z_1(x) - (1-s) z_0(x) = 0$ with 
	respect to $y$, there exists a unique function 
	$y = y(x,s) \in \mc^k(\rx \times [0,1], \ry)$ such that 
	\begin{equation}
		\label{lemma:basic_1}
		g(x,y(x,s))=s z_1(x)+(1-s)z_0(x).
	\end{equation}

	Now we assume that $z_i \in W^{k,\infty}$ for $i=0,1$.
	Let $y_i(x)=y(x,i)$ for $i = 0, 1$. For each $j = 0, 1, 
	\ldots, k$, there exists $\theta \in [0,1]$ such that 
	\begin{equation}\label{lemma:basic_0}
		|\nabla^j y_1(x) - \nabla^j y_0(x)| \le \left| \pd{}{s} 
		\nabla^j_x y(x,\theta) \right|.
	\end{equation}
	By \cref{lemma:basic_1}, one obtains that
	\begin{equation}
		\label{lemma:basic_2}
		\pd{y}{s} (x,s) = \left( \pd{g}{y} (x,y(x,s)) \right) ^{-1} 
		(z_1(x) - z_0(x))
	\end{equation}
	and
	\begin{equation}
		\label{lemma:basic_3}
		\nabla_x y(x,s) = \left( \pd{g}{y} (x,y(x,s)) \right) ^{-1} 
		\left( s \nabla z_1(x) + (1-s) \nabla z_0(x) - 
		\pd{g}{x}(x,y(x,s)) \right).
	\end{equation}

  We assert that $\sup\limits_{s \in [0,1]}\norm{\nabla_x y(\cdot, 
  s)}_{k-1, \infty}$ is bounded, where the bound is dependent on 
  $\beta$, $k$ and $\norm{z_i}_{k,\infty}$. By \cref{lemma:basic_3}, 
  $\sup\limits_{s \in [0,1]}\norm{\nabla_x y(\cdot, s)}_{0, \infty}$ 
  is bounded. Now assume that $\sup\limits_{s \in 
  [0,1]}\norm{\nabla_x y(\cdot, s)}_{j, \infty}$ is bounded for some 
  $j= 0, 1, \ldots, k-2$. 
  \cref{lemma:highOrderD} yields that $\sup\limits_{s \in 
  [0,1]}\norm{\pd{g}{(x,y)}(\cdot,y(\cdot,s))}_{j+1, \infty}$ and 
  then $\sup\limits_{s \in [0,1]} \norm{\nabla_x y(\cdot, s)}_{j+1, 
  \infty} $ are bounded, which yields the assertion.
  
  By the above assertion and \cref{lemma:highOrderD}, 
  $\left( \pd{g}{y} (x,y(x,s)) \right) ^{-1} $ is bounded in 
  $W^{k,\infty}$, where the bound depends on $\beta$, $k$ and 
  $\norm{z_i}_{k,\infty}$. Therefore, by \cref{lemma:basic_2}, one 
  obtains that 
  \begin{displaymath}
    \sup_{s \in [0,1]} \norm{\pd{y}{s} (\cdot, s)}_{k, \infty} 
    \lesssim \norm{z_1 - z_0}_{k, \infty}.
  \end{displaymath}
  Together with \cref{lemma:basic_0}, the proof is completed.
\end{proof}

\begin{lemma}
	\label{lemma:rhs}
  Assume that $y_i: \rx \rightarrow \ry$ satisfies that $y_i \in 
  \mc^{k+1}$ and $\nabla y_i \in W^{k,\infty}$ for each $i = 0, 1$ 
  and some $k = 0, 1, \ldots, K - 1$. 
  If $y_1 - y_0 \in \ml^\infty$, then 
	\begin{displaymath}
		\norm{\nabla y_1(\cdot) f(\cdot, y_1(\cdot)) - \nabla y_0(\cdot) 
		f(\cdot, y_0(\cdot))}_{k, \infty} \lesssim \norm{y_1 - y_0}_{k+1, 
		\infty},
	\end{displaymath}
  where the bound depends on $f$, $k$ and 
  $\norm{\nabla y_i}_{k,\infty}$ with $i = 0, 1$.
\end{lemma}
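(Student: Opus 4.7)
The plan is to split the difference additively and control each piece with the composition and product estimates from \cref{lemma:highOrderD}. I write
\begin{displaymath}
  \nabla y_1(x) f(x, y_1(x)) - \nabla y_0(x) f(x, y_0(x)) = \bigl(\nabla y_1(x) - \nabla y_0(x)\bigr) f(x, y_1(x)) + \nabla y_0(x) \bigl(f(x, y_1(x)) - f(x, y_0(x))\bigr),
\end{displaymath}
and estimate the $W^{k,\infty}$-norms of the two terms separately.

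For the first term, I use the product estimate (\cref{lemma:highOrderD_3}) to bound
\begin{displaymath}
  \norm{(\nabla y_1 - \nabla y_0) f(\cdot, y_1)}_{k,\infty} \lesssim \sum_{j=0}^{k} |\nabla y_1 - \nabla y_0|_{j,\infty} \, |f(\cdot, y_1(\cdot))|_{k-j, \infty}.
\end{displaymath}
The factor $\norm{\nabla y_1 - \nabla y_0}_{k,\infty}$ is exactly $\norm{y_1 - y_0}_{k+1,\infty}$ (minus lower-order terms already controlled by it), while the factor $\norm{f(\cdot, y_1(\cdot))}_{k, \infty}$ is bounded by a composition estimate from \cref{lemma:highOrderD_2} applied to $x \mapsto (x, y_1(x))$, whose bound depends only on $f$, $k$, and $\norm{\nabla y_1}_{k, \infty}$ (together with $\norm{y_1 - y_0}_{0, \infty}$ giving $W^{K,\infty}$-control of the input along a reference trajectory, via \cref{assump:bound}).

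For the second term, I rewrite the difference of $f$ values as
\begin{displaymath}
  f(x, y_1(x)) - f(x, y_0(x)) = \int_0^1 \nabla_y f\bigl(x, y_0(x) + s(y_1(x) - y_0(x))\bigr) \, \mathrm{d}s \cdot (y_1(x) - y_0(x)),
\end{displaymath}
and note that by \cref{lemma:highOrderD_2} the integrand is uniformly bounded in $W^{k,\infty}$ by a constant depending on $f$, $k$, and $\norm{\nabla y_0}_{k,\infty}$, $\norm{\nabla y_1}_{k,\infty}$. Applying \cref{lemma:highOrderD_3} again yields
\begin{displaymath}
  \norm{\nabla y_0 \bigl(f(\cdot, y_1) - f(\cdot, y_0)\bigr)}_{k, \infty} \lesssim \norm{\nabla y_0}_{k, \infty} \, \norm{y_1 - y_0}_{k, \infty} \lesssim \norm{y_1 - y_0}_{k+1, \infty},
\end{displaymath}
with constants depending on the quantities allowed by the statement.

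The only mildly delicate step is the composition estimate for $f(\cdot, y_i(\cdot))$ in $W^{k,\infty}$: since $y_i$ itself is only assumed to satisfy $\nabla y_i \in W^{k, \infty}$ rather than $y_i \in W^{k+1, \infty}$, I must check that the hypotheses of \cref{lemma:highOrderD_2} are met with the gradient condition alone (which they are, because only derivatives of $y_i$ of order $\ge 1$ enter the chain rule) and that the $\ml^\infty$-bound on $y_1 - y_0$ together with \cref{assump:bound} is enough to control $f$ and its $y$-derivatives along both curves. Once that bookkeeping is done, the two bounds combine to give the claimed estimate.
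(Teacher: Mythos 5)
Your decomposition into $(\nabla y_1 - \nabla y_0)f(\cdot,y_1) + \nabla y_0\,(f(\cdot,y_1)-f(\cdot,y_0))$ followed by the integral representation for the second piece is a minor variant of the paper's argument: the paper interpolates along $\Phi(x,s)=sy_1(x)+(1-s)y_0(x)$, sets $\Psi = \nabla_x\Phi\,f(\cdot,\Phi)$, and applies the mean value theorem in $s$, which produces exactly the same two-term structure you obtain. Both routes bottom out in the same composition and product estimates from \cref{lemma:highOrderD}, so the proof is correct and essentially the same as the paper's; one small inaccuracy is your remark that the $\ml^\infty$-bound on $y_1-y_0$ is needed to control $f$ along the curves -- it is not, since $f\in W^{K,\infty}$ is globally bounded, and the role of that hypothesis is only to make the right-hand side $\norm{y_1-y_0}_{k+1,\infty}$ finite.
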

\begin{proof}
	Let $\Phi(x,s) = s y_1(x) + (1-s) y_0(x)$
	and $\Psi(x,s) = \nabla_x \Phi(x,s) f(x,\Phi(x,s))$.
	For each $j = 0, 1, \ldots, k$, there exists $\theta \in [0,1]$ 
	such that 
	\begin{displaymath}
		|\nabla_x^j \Psi(x,1) - \nabla_x^j \Psi(x,0)| \le \left| \pd{}{s} 
		\nabla_x^j \Psi(x,\theta) \right|.
	\end{displaymath}
	Actually, 
	\begin{equation}
		\label{equ:lemma_iter_2}
    \begin{aligned}
      \pd{\Psi}{s} (x,s) & = \nabla_x \left( \pd{\Phi}{s} (x,s) 
      \right) f(x,\Phi(x,s)) + \nabla_x \Phi(x,s) 
      \pd{f}{y}(x,\Phi(x,s)) \pd{\Phi}{s}(x,s) \\
      & = (\nabla y_1(x) - \nabla y_0(x)) f(x,\Phi(x,s)) + \nabla_x 
      \Phi(x,s) \pd{f}{y}(x,\Phi(x,s)) (y_1(x) - y_0(x)).
    \end{aligned}
	\end{equation}
  By \cref{lemma:highOrderD}, one obtains that $\sup\limits_{s \in 
  [0,1]} \norm{f(\cdot,\Phi(\cdot,s))}_{k, \infty}$ and 
  $\sup\limits_{s \in [0,1]} \norm{\nabla_x \Phi(\cdot,s) 
  \pd{f}{y}(\cdot,\Phi(\cdot,s))} 
  _{k,\infty}$ are bounded, and the bound depends on $f$, $k$ and 
  $\norm{\nabla y_i}_{k,\infty}$ with $i = 0,1$. Therefore, 
  \begin{displaymath}
    \sup_{s \in [0,1]} \norm{\pd{\Psi}{s}(\cdot, s)}_{k, \infty} 
    \lesssim \norm{y_1 - y_0}_{k+1, \infty},
  \end{displaymath}
	which completes the proof.
\end{proof}

\begin{corollary}
	\label{lemma:iter}
  For $k = 0, 1, \ldots, K$, 
	\begin{equation}
		\label{equ:lemma_iter}
		\norm{ \Gamma_{k+1} (\cdot, \eps) - \Gamma_k (\cdot, \eps)} 
		_{K-k, \infty} = \mathcal{O}(\eps^{k+1}).
	\end{equation}
\end{corollary}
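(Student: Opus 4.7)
The plan is to prove the corollary by induction on $k$, carrying along the regularity of $\Gamma_k$ alongside the error estimate. Specifically, I would establish the combined statement that for each $k = 0, 1, \ldots, K$ one has $\nabla \Gamma_k(\cdot, \eps) \in W^{K-k, \infty}$ uniformly in $\eps \in (0, \eps_0]$, together with the bound $\norm{\Gamma_{k+1}(\cdot,\eps) - \Gamma_k(\cdot,\eps)}_{K-k,\infty} = \mathcal{O}(\eps^{k+1})$.

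For the base case $k = 0$, note $\Gamma_0 = \gamma$, so $\nabla \Gamma_0 \in W^{K,\infty}$ by \cref{remark:expression}. Since $g(x,\gamma(x)) = 0$ and $g(x,\Gamma_1(x,\eps)) = \eps \nabla\gamma(x) f(x,\gamma(x))$, applying \cref{lemma:basic} with $z_0 = 0$ and $z_1 = \eps \nabla\gamma \, f(\cdot,\gamma)$ (whose $W^{K,\infty}$-norm is $\mathcal{O}(\eps)$ by \cref{lemma:highOrderD} and \cref{assump:bound}) delivers $\norm{\Gamma_1 - \Gamma_0}_{K,\infty} = \mathcal{O}(\eps)$.

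For the inductive step, fix $k \geq 1$ and suppose the claim is established for all smaller indices. First I would promote the regularity to level $k$: comparing $g(x, \Gamma_k) = \eps \nabla \Gamma_{k-1} f(\cdot, \Gamma_{k-1})$ with $g(x,\gamma) = 0$ and applying \cref{lemma:basic}, using \cref{lemma:highOrderD} to bound the right-hand side in $W^{K-k+1,\infty}$ (which is permissible thanks to the inductive regularity $\nabla \Gamma_{k-1} \in W^{K-k+1,\infty}$), shows $\norm{\Gamma_k - \gamma}_{K-k+1,\infty} = \mathcal{O}(\eps)$, hence $\nabla\Gamma_k \in W^{K-k,\infty}$ uniformly. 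Next, subtracting the defining relations at levels $k$ and $k+1$,
\begin{displaymath}
g(x, \Gamma_{k+1}) - g(x, \Gamma_k) = \eps \bigl( \nabla \Gamma_k f(\cdot, \Gamma_k) - \nabla \Gamma_{k-1} f(\cdot, \Gamma_{k-1}) \bigr),
\end{displaymath}
and applying \cref{lemma:basic} bounds $\norm{\Gamma_{k+1} - \Gamma_k}_{K-k,\infty}$ by $\eps$ times the $W^{K-k,\infty}$-norm of the right-hand side. Then \cref{lemma:rhs} with $y_0 = \Gamma_{k-1}$, $y_1 = \Gamma_k$ (whose gradients lie in $W^{K-k,\infty}$ by the inductive hypothesis and the regularity just established) bounds that by a constant times $\norm{\Gamma_k - \Gamma_{k-1}}_{K-k+1,\infty}$, which equals $\norm{\Gamma_k - \Gamma_{k-1}}_{K-(k-1),\infty} = \mathcal{O}(\eps^k)$ by the inductive hypothesis. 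Chaining gives $\mathcal{O}(\eps^{k+1})$ as desired.

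The main obstacle is the careful bookkeeping of Sobolev regularity: each iteration of the scheme consumes one derivative, so one must confirm both that the finite smoothness provided by \cref{assump:bound} accommodates the cascade down to $K-k = 0$, and that the implicit constants from \cref{lemma:basic} and \cref{lemma:rhs} remain controlled uniformly in $\eps$. This uniformity is guaranteed by the inductive regularity statement, since those constants depend only on $\beta$, $k$, and the relevant Sobolev norms of $z_i$ and $\nabla y_i$, which stay bounded in $\eps \in (0, \eps_0]$.
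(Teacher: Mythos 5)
Your proof is correct and follows essentially the same route as the paper: an induction on $k$ that applies \cref{lemma:basic} to transfer estimates across the implicit relation $g(x,\cdot)=z$ and \cref{lemma:rhs} to bound the difference of right-hand sides in the iterative formula. The only cosmetic divergence is that you re-derive the regularity $\nabla\Gamma_k\in W^{K-k,\infty}$ by a fresh comparison of $g(x,\Gamma_k)$ with $g(x,\gamma)=0$, whereas the paper reads the same fact off the telescoped inductive bounds $\norm{\Gamma_{j+1}-\Gamma_j}_{K-j,\infty}=\mathcal{O}(\eps^{j+1})$ together with $\nabla\Gamma_0\in W^{K,\infty}$; the two observations are equivalent and lead to the identical conclusion.
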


\begin{proof}
	By \cref{equ:nabla_gamma} and \cref{lemma:highOrderD}, one can 
	prove by induction that $\norm{\nabla_x \Gamma_0 (\cdot, \eps)}_{K, 
	\infty}$ is bounded. Therefore, $\norm{\eps \nabla_x 
	\Gamma_0(\cdot,\eps) f(\cdot,\Gamma_0(\cdot,\eps))}_{K,\infty} = 
	\mathcal{O}(\eps)$. By 
	\cref{lemma:basic}, $\norm{\Gamma_1(\cdot,\eps) - 
	\Gamma_0(\cdot, \eps)}_{K, \infty} = \mathcal{O}(\eps)$.
	Now assume that \cref{equ:lemma_iter} holds for $0, 1, \ldots, k$ 
	where $k \le K - 1$. By the induction 
	hypothesis, $\norm{\nabla_x \Gamma_j(\cdot, \eps)}_{K-k-1, \infty}$ 
	are bounded for $j = 0, 1, \ldots, k+1$.
	By \cref{lemma:rhs}, one obtains that 
	\begin{displaymath}
		\norm{ \nabla_x \Gamma_{k+1} (\cdot,\eps) 
		f(\cdot,\Gamma_{k+1}(\cdot,\eps)) - \nabla_x \Gamma_k 
		(\cdot,\eps) f(\cdot,\Gamma_k(\cdot,\eps))} _{K - k -1, \infty} = 
		\mathcal{O}(\eps^{k+1}).
	\end{displaymath}
	By \cref{lemma:basic}, the proof is completed.
\end{proof}

\subsection{Proof of \cref{thm:iteration_attractor}}
\begin{proof}[Proof of \cref{thm:iteration_attractor}]
	We prove this theorem by induction.
	
	When $k = 0$, there exists a constant $C > 0$ dependent on $k$ 
    such that 
	\begin{displaymath}
	\begin{aligned}
	\od{|z_0|^2}{t}
        &= \frac{2}{\eps} \ip{z_0}{g(x,y)-g(x,\gamma(x))}
        -2 \ip{z_0}{\nabla \gamma(x) f(x,y)} \\
	&\le -\frac{2 \beta}{\eps} |z_0|^2 + C |z_0|
    \le -\frac{A}{\eps} |z_0|^2 + \frac{C^2}{4(2\beta - A)} \eps.
	\end{aligned}
	\end{displaymath}
	Here the last inequality is due to Cauchy inequality. By Gronwall 
    inequality, 
	\begin{displaymath}
	|z_0(t)|^2 \le \frac{C^2}{A (2 \beta - A)} \eps^2 
    (1 - e^{-\frac{A}{\eps}t}) + e^{-\frac{A}{\eps}t} |z_0(0)|^2
    \le \frac{C^2}{A (2 \beta - A)} \eps^2 
     + e^{-\frac{A}{\eps}t} |z_0(0)|^2.
	\end{displaymath}
	
	Assuming that the theorem holds for $k-1$, where $k = 1, 2, \ldots, 
	K$. One obtains that
	\begin{equation}
	\label{equ:thm_iter_attr_1}
	\begin{aligned}
    \od{|z_k|^2}{t} &= \frac{2}{\eps} 
    \ip{z_k}{g(x,y)-g(x,\Gamma_k(x,\eps))}\\
	&\quad\quad\quad + 2 \ip{z_k}{\nabla_x \Gamma_{k-1}(x,\eps)  
        f(x,\Gamma_{k-1}(x,\eps)) 
        - \nabla_x \Gamma_k(x,\eps) f(x,y)}\\
	&\le -\frac{2\beta}{\eps} |z_k|^2 + 2 |z_k| \cdot 
        |\nabla_x \Gamma_{k-1}(x,\eps) f(x,\Gamma_{k-1}(x,\eps)) 
        - \nabla_x \Gamma_k(x,\eps) f(x,y)|.
	\end{aligned}
	\end{equation}
	\cref{lemma:iter} and \cref{assump:bound} yield 
    that there exists a constant $C>0$ such that 
	\begin{equation}
	\label{equ:thm_iter_attr_2}
	\begin{aligned}
	&2 |\nabla_x \Gamma_{k-1}(x,\eps) f(x,\Gamma_{k-1}(x,\eps)) - 
    \nabla_x \Gamma_k(x,\eps) f(x,y)|\\
	\le\ & 2 |\nabla_x \Gamma_{k-1}(x,\eps) f(x,\Gamma_{k-1}(x,\eps)) 
    - \nabla_x \Gamma_k(x,\eps) f(x,\Gamma_{k-1}(x,\eps))|\\
	&\quad + 2 |\nabla_x \Gamma_{k}(x,\eps) f(x,\Gamma_{k-1}(x,\eps)) 
    - \nabla_x \Gamma_k(x,\eps) f(x,y)|\\
	\le\ &C\eps^k+C|z_{k-1}|.
	\end{aligned}
	\end{equation}
	Take $\tilde{A}\in(A,2\beta)$. Combing 
    \cref{equ:thm_iter_attr_1} and \cref{equ:thm_iter_attr_2}, one 
    gets by Cauchy inequality that 
	\begin{equation}
	\label{equ:thm_iter_attr_3}
	\begin{aligned}
    \od{|z_k|^2}{t} &\le - \frac{2\beta}{\eps} |z_k|^2 
        + C \eps^k |z_k| + C |z_{k-1}| \cdot  |z_k|\\
	&\le -\frac{\tilde{A}}{\eps} |z_k|^2 +
   \frac{C^2}{2(2\beta-\tilde{A})} \eps^{2k+1} + 
   \frac{C^2}{2(2\beta-\tilde{A})} \eps |z_{k-1}|^2.
	\end{aligned}
	\end{equation}
	The induction hypothesis says that there exists $C_{k-1} > 0$ 
    such that
	\begin{equation}
	\label{equ:thm_iter_attr_4}
	|z_{k-1}(t)|^2 \le C_{k-1} ( \eps^{2k} 
        + e^{-\frac{\tilde{A}}{\eps}t} |z_{k-1}(0)|^2 ).
	\end{equation}
	By \cref{equ:thm_iter_attr_3}, \cref{equ:thm_iter_attr_4} and 
    Gronwall's inequality, one gets that 
    \begin{equation}\label{equ:thm_iter_attr_5}
        \begin{aligned}
            |z_k(t)|^2 & \le \frac{C^2 (1 + C_{k-1})}
                {2 \tilde{A} (2\beta - \tilde{A})}
                (1 - e^{-\frac{\tilde{A}}{\eps} t}) \eps^{2k + 2}
            + e^{-\frac{\tilde{A}}{\eps} t} |z_k(0)|^2
            + \frac{C^2 C_{k - 1}}{2 (2\beta - \tilde{A})}
                t e^{-\frac{\tilde{A}}{\eps} t} \eps |z_{k-1}(0)|^2 \\
            & \le \frac{C^2 (1 + C_{k-1})}
                {2 \tilde{A} (2\beta - \tilde{A})} \eps^{2k + 2}
            + e^{-\frac{A}{\eps} t} |z_k(0)|^2
            + \frac{C^2 C_{k - 1} e^{-1}}{2 (\tilde{A} - A) 
                (2\beta - \tilde{A})} e^{-\frac{A}{\eps} t} \eps^2 
                |z_{k-1}(0)|^2.
        \end{aligned}
    \end{equation}
    By \cref{lemma:iter}, one gets that 
    \begin{equation}\label{equ:thm_iter_attr_6}
        |z_{k-1}(0)|^2 \le 2 |z_k(0)|^2 + 2 |z_k(0) - z_{k-1}(0)| ^2
        \le 2 |z_k(0)|^2 + C \eps^{2k}.
    \end{equation}
    By \cref{equ:thm_iter_attr_5} and \cref{equ:thm_iter_attr_6},
    the theorem holds for $k$. Then the theorem is thus proved.
\end{proof}

\subsection{Proof of \cref{thm:iteration_invariant}}
\begin{proof}[Proof of \cref{thm:iteration_invariant}]
  
  
    Since $\nabla_x f(x,y)$, $\nabla_y f(x,y)$ and $\nabla_x 
    \Gamma_k(x,\eps)$ are all bounded, then there exists a 
    constant $C > 0$ such that
    \begin{displaymath}
        \begin{aligned}
            &\left| \od{(x-X_k)}{t}\right | 
            = |f(x,y) - f(X_k,\Gamma_k(X_k,\eps))|\\
            \le\ &|f(x,z_k + \Gamma_k(x,\eps)) 
            - f(x,\Gamma_k(X_k,\eps))| 
            + |f(x,\Gamma_k(X_k,\eps)) 
            - f(X_k,\Gamma_k(X_k,\eps))|\\
            \le\ & C |z_k| + C |x - X_k|.
        \end{aligned}
    \end{displaymath}
    Then one obtains that 
    \begin{displaymath}
        \frac{1}{2} \od{|x-X_k|^2}{t} = \ip{x-X_k}{\od{(x-X_k)}{t}} 
        \le |x-X_k| \cdot \left| \od{(x-X_k)}{t} \right| 
        \le C |z_k| \cdot |x-X_k| + C |x-X_k|^2.
    \end{displaymath}
    Therefore, by \cref{thm:iteration_attractor} and 
    Cauchy-Schwarz inequality, there exists a constant $C_k > 0$ 
    such that 
    \begin{displaymath}
        \od{|x-X_k|^2}{t} \le C_k (|x-X_k|^2 + \eps^{2k + 2}
        + e^{-\frac{\beta}{\eps}t} |z_k(0)|^2 ).
    \end{displaymath}
    By Gronwall inequality,
    \begin{displaymath}
        |x(t)-X_k(t)|^2 \le e^{C_k t} |x(0) - X_k(0)|^2 
        + \eps^{2k + 2} (e^{C_k t} - 1)
        + C_k \frac{e^{C_k t} - e^{-\frac{\beta}{\eps}t}}
        {\frac{\beta}{\eps} + C_k} |z_k(0)|^2 ,
    \end{displaymath}
    which completes the proof.
\end{proof}

\subsection{Proof of \cref{thm:nd}}
\begin{proof}[Proof of \cref{thm:nd}]
	When $k = 0$, \cref{equ:nd} is straightforward since 
	$\Gamma_0(x,\eps) = \hgd_0(x,\eps)$. Assume that 
	\cref{equ:nd} holds for $0, 1, \ldots, k$ where $k = 0, 
	1, \ldots, \lfloor \frac{K}{2} \rfloor - 1$.
	By \cref{lemma:basic}, it suffices to show that 
	\begin{equation}
		\label{equ:nd_3}
		\norm{ \hgd_k (\cdot + f(\cdot, \hgd_k 
		(\cdot,\eps)) \tau,\eps) - \hgd_k (\cdot,\eps) - \tau 
		\nabla_x \Gamma_k (\cdot,\eps) f(\cdot, \Gamma_k (\cdot,\eps)) } 
		_{K - 2k - 2,\infty} = \mathcal{O}(\tau^2).
	\end{equation}
  By the induction hypothesis, $\norm{\nabla_x \hgd_k (\cdot, 
  \eps) }_{K - 2k - 1, \infty}$ is bounded.
	By \cref{lemma:rhs}, 
	\begin{equation}
		\label{equ:nd_4}
		\norm{ \tau \nabla_x \hgd_k (\cdot,\eps) f(\cdot, 
		\hgd_k(\cdot,\eps) ) - \tau \nabla_x \Gamma_k 
		(\cdot,\eps) f(\cdot, \Gamma_k(x,\eps)) } _{K - 2k - 1, \infty} = 
		\mathcal{O}(\eps \tau^2).
	\end{equation}
  Since $\norm{ \nabla_x^2 \hgd_k (\cdot, \eps)} _{K - 2k - 
  2, \infty}$ is bounded, one obtains by \cref{lemma:diff} that 
	\begin{equation}
		\label{equ:nd_5}
		\norm{  \hgd_k (\cdot + f(\cdot, \hgd_k 
		(\cdot,\eps) ) \tau, \eps) - \hgd_k (\cdot,\eps) - \tau 
		\nabla_x \hgd_k (\cdot,\eps) f(\cdot, \hgd_k 
		(\cdot,\eps) ) }_{K - 2k - 2, \infty} =\mathcal{O}(\tau^2).
	\end{equation}
	By \cref{equ:nd_4} and \cref{equ:nd_5}, one can obtain 
	\cref{equ:nd_3}. Then the proof is completed.
\end{proof}

\bibliographystyle{siamplain}
\bibliography{ref}

\end{document}


\maketitle

\section{A detailed example}

Here we include some equations and theorem-like environments to show
how these are labeled in a supplement and can be referenced from the
main text.

\bibliographystyle{siamplain}
\bibliography{ref}